\newif\ifHAL
\HALtrue

\ifHAL
\documentclass[10pt,a4paper]{article}
\else
\documentclass[10pt]{siamltex}
\fi

\usepackage[colorlinks,citecolor=cyan,linkcolor=magenta]{hyperref}
\usepackage[latin1]{inputenc}
\usepackage{amsmath}
\usepackage{amsfonts}
\usepackage{amssymb}
\usepackage{bm}
\usepackage{subcaption}
\usepackage{graphicx}
\usepackage{xspace}
\usepackage{tikz}
\usepackage{fullpage}

\newcommand{\Rev}[1]{{#1}}

\newcommand{\RR}{\mathbb{R}}      

\newcommand{\vertiii}[1]{{\|\kern-0.25ex | #1
		| \kern-0.25ex \|}}

\newcommand{\mean}[1]{\{\kern-1.1mm\{#1\}\kern-1.1mm\}}                  

\newcommand{\ndg}[1]{| \kern -.25mm \|{#1}| \kern -.25mm \|}
\newcommand{\nsdg}[1]{| \kern -.25mm \|{#1}| \kern -.25mm \|_{\rm s}}
\newcommand{\su}{\sum_{K\in \mesh}}


\newcommand{\fes}{\hat{V}_{h}^k}
\newcommand{\fesz}{\hat{V}_{h0}^k}
\newcommand{\fesE}{\hat{V}_{K}^k}
\newcommand{\n}{{\bm n}}
\newcommand{\Fall}{\mathcal{F}_h}


\newcommand{\mesh}{\mathcal{T}_h}

\newcommand{\ncdg}[1]{| \kern -.25mm \|{#1}| \kern -.25mm \|_{\rm DG}}

\renewcommand{\tilde}[1]{\widetilde{#1}}
\renewcommand{\hat}[1]{\widehat{#1}}
\makeatletter
\newcommand*{\rom}[1]{\text{\expandafter\@slowromancap\romannumeral #1@}}
\makeatother

\ifHAL
\usepackage{theorem}
\newtheorem{corollary}{Corollary}[section]
\newtheorem{lemma}[corollary]{Lemma}
\newtheorem{theorem}[corollary]{Theorem}

\newtheorem{remark}[corollary]{Remark}

\newcommand{\qed}{ \vspace{-0.5cm} \hfill $\Box$ }
\newenvironment{proof}[1][Proof.]{\begin{trivlist}
		\item[\hskip \labelsep {\bfseries #1}]}{\end{trivlist}\qed}
\else
\newtheorem{remark}[theorem]{Remark}
\fi

\newcommand{\upi}{^{\mathrm{i}}}
\newcommand{\upb}{^{\mathrm{b}}}
\newcommand{\Fb}{\mathcal{F}_h\upb}
\newcommand{\Fint}{\mathcal{F}_h\upi}
\newcommand{\dK}{\partial K}
\newcommand{\dKi}{\partial K\upi}
\newcommand{\dKb}{\partial K\upb}
\newcommand{\FK}{\mathcal{F}_{\dK}}
\newcommand{\Ihk}{\mathcal{\hat{I}}_h^k}
\newcommand{\FKi}{\mathcal{F}_{\dK\upi}}
\newcommand{\FKb}{\mathcal{F}_{\dK\upb}}
\newcommand{\meshi}{\mesh\upi}
\newcommand{\meshb}{\mesh\upb}
\newcommand{\bG}{{\bm G}}

\begin{document}
\title{Hybrid high-order and weak Galerkin methods for the biharmonic problem}
\author{
	Zhaonan Dong\thanks{
	Inria, 2 rue Simone Iff, 75589 Paris, France,
	and CERMICS, Ecole des Ponts, 77455 Marne-la-Vall\'{e}e 2, France
	{\tt{zhaonan.dong@inria.fr}}.}
	\and
        Alexandre Ern\thanks{
	CERMICS, Ecole des Ponts, 77455 Marne-la-Vall\'{e}e 2, France,
	and  Inria, 2 rue Simone Iff, 75589 Paris, France
	{\tt{alexandre.ern@enpc.fr}}.}
        }
\date{\today}

\maketitle

\begin{abstract}
We devise and analyze two hybrid high-order (HHO) methods for the numerical approximation of the biharmonic problem. The methods support polyhedral meshes, rely on the primal formulation of the problem, and deliver $O(h^{k+1})$ $H^2$-error estimates when using polynomials of order $k\ge0$ to approximate the normal derivative on the mesh (inter)faces. Both HHO methods hinge on a stabilization in the spirit of Lehrenfeld--Sch\"oberl for second-order PDEs. The cell unknowns are polynomials of order $(k+2)$ that can be eliminated locally by means of static condensation. The face unknowns approximating the trace of the solution on the mesh (inter)faces are polynomials of order $(k+1)$ in the first HHO method which is valid in dimension two and uses an original stabilization involving the canonical hybrid finite element, and they are of order $(k+2)$ for the second HHO method which is valid in arbitrary dimension and uses only $L^2$-orthogonal projections in the stabilization. A comparative discussion with the weak Galerkin methods from the literature is provided, highlighting the close connections and the improvements proposed herein. Additionally, we show how the two HHO methods can be combined with a Nitsche-like boundary-penalty technique to weakly enforce the boundary conditions. An originality in the devised Nitsche's technique is to avoid any penalty parameter that must be large enough. Finally, numerical results showcase the efficiency of the proposed methods, and indicate that the HHO methods can generally outperform discontinuous Galerkin methods and even be competitive with $C^0$-interior penalty methods on triangular meshes.
\end{abstract}

\ifHAL
\else
\begin{keywords}
Biharmonic problem, fourth-order PDEs, hybrid high-order method,
weak Galerkin method, stability, error analysis, computational performance
\end{keywords}
\begin{AMS}
65N15, 65N30, 74K20
\end{AMS}
\markboth{Z. DONG, A. ERN}{HHO and WG methods for the biharmonic problem}
\fi

\section{Introduction} \label{Introduction}

Fourth-order PDEs are encountered in the modeling of various physical phenomena,
such as plate bending, thin-plate elasticity,
microelectromechanical systems, and the Cahn--Hilliard phase-field model.
In the present work, we are concerned with the following model problem:
\begin{equation}\label{pde}
\begin{alignedat}{2}
\Delta^2 u  &=f &\qquad&\text{in $\Omega$}, \\
u & = 0  &\qquad&\text{on $\partial \Omega$}, \\
\partial_n u  &= 0  &\qquad&\text{on $\partial \Omega$},
\end{alignedat}
\end{equation}
where $\Omega$ is an open, bounded, polytopal, Lipschitz set in $\mathbb{R}^d$, $d\ge2$, with boundary $\partial \Omega$, the load $f$ is in $L^2(\Omega)$, and $\partial_n$ denotes the normal derivative on $\partial \Omega$. Non-homogeneous boundary conditions
\Rev{and a boundary condition on the second-normal derivative
can be readily incorporated}. 
Instead, considering more singular loads
is nontrivial for the present purpose. \Rev{We also emphasize that the present developments hinge on the weak formulation of \eqref{pde} involving the Hessian.} 

The main goal of this work is to devise and analyze a discretization method
for~\eqref{pde} offering two main features:
(i) it supports polyhedral meshes (the mesh cells
can be polyhedra as such or have a simple shape but contain hanging nodes);
(ii) it hinges on the primal formulation of
the problem, thereby leading to a symmetric positive definite system matrix.
There are already some methods in the literature achieving these goals.
These methods can be loosely classified into three groups, depending on
the dimension of the smallest geometric object to which discrete
unknowns are attached. This criterion is relevant since it
influences the stencil of the method, and it also influences the level
of conformity that can be achieved in the approximation of the
solution. The methods in the first group were developed in the
practically important case where $d=2$. They
attach discrete unknowns to the mesh vertices, edges, and cells and
can achieve $C^1$-conformity. Salient examples are the
$C^1$-conforming virtual element methods (VEM) from \cite{BreMa:13,ChiMa:16} and the
$C^0$-conforming VEM from \cite{ZhChZ:16}. Another example of method in this group
is the nonconforming VEM from
\cite{AnMaV:18,ZhZCM:18} where the approximation is, however, (fully) nonconforming.
The methods in the second group attach discrete
unknowns only to the mesh faces and cells for $d\ge2$.
They are amenable to static
condensation (meaning that the cell unknowns can be eliminated locally
leading to a global problem coupling only the face unknowns),
and they provide a nonconforming approximation to the solution.
The two salient examples are the weak Galerkin (WG) methods from
\cite{MuWaY:14,ZhaZh:15,YeZhZ:20} and the hybrid high-order (HHO) method
from \cite{BoDPGK:18}. Finally, the methods in the third group attach
discrete unknowns only to the mesh cells for $d\ge2$ and belong to the class
of interior penalty
discontinuous Galerkin (IPDG) methods. These are also nonconforming methods, and they
were developed for the model problem~\eqref{pde} in \cite{MozSu:03,SulMo:07,GeoHo:09}.
We mention that on specific meshes composed of simplices or cuboids,
there are variants of the above methods achieving $C^0$-conformity, such as the
$C^0$-WG method from \cite{MuWYZ:14,CheFe:16} and the $C^0$-IPDG
from \cite{EGHLMT:02,BrennerC0}. Furthermore, important examples of
nonconforming finite elements on simplicial meshes are the Morley element
\cite{morley1968,WangXu:06} and the Hsieh--Clough--Tocher (HCT) element
(see, e.g., \cite[Chap.~6]{ciarlet:02}).

In the present work, we focus on HHO methods. HHO methods were introduced in
\cite{DiPEr:15} for locking-free linear elasticity and in \cite{DiPEL:14}
for linear diffusion. The two ingredients of HHO methods are
a local reconstruction operator and a local stabilization operator in each mesh cell.
For second-order PDEs,
the aim of the first operator is to reconstruct locally a
gradient from the cell and the face
unknowns, and the aim of the second operator is to penalize in a least-squares sense
the difference between the trace of the cell unknown and the face unknown on every mesh face.
HHO methods have undergone a vigorous development in the last few years; to cite
a few examples, we mention
Navier--Stokes flows \cite{DiPKr:18}, elastoplastic problems
\cite{AbErPi:19}, Tresca friction problems \cite{ChErPi:20},
spectral problems \cite{CaCDE:19}, and magnetostatics \cite{ChDPL:20}.
HHO methods were embedded into the broad framework of hybridizable dG (HDG)
methods in \cite{CoDPE:16} by reformulating the HHO equations as local balance
equations with equilibrated numerical fluxes. Moreover, HHO methods are closely related to
WG methods, which were also embedded into the HDG framework in \cite[Sec.~6.6]{Cockburn:16}.
The reconstruction operator in the HHO method corresponds to the weak gradient in
WG methods. Hence, HHO and WG methods
differ only in the choice of the discrete unknowns and in the design of stabilization.
Although the close connections between HHO and WG methods should be mutually beneficial,
these connections are, in the authors' opinion, not sufficiently explicit in the
literature, and the title of the present work is also
meant to draw the community's attention on this opportunity.

The design of the stabilization turns out to be a key ingredient so that the method
leads to \emph{optimal} error estimates. By this, we mean, in the case of a
second-order elliptic PDE,
that the method delivers an $O(h^{k+1})$ $H^1$-error estimate, where $k\ge0$ is the
degree of the face unknowns. Notice that this criterion is consistent with
the classical properties of hybridized mixed finite element methods.
The point we want to make here is that optimality cannot be
reached on general meshes
if one uses plain least-squares stabilization, i.e., a more subtle design of the
stabilization is required.
If the cell unknowns are of degree $k$, optimality is
achieved in \cite{DiPEr:15,DiPEL:14} by means of a stabilization that uses
the reconstruction operator (this is the first occurrence of this
idea in the broad framework of HDG methods). Alternatively, if the cell
unknowns are of degree $(k+1)$, one can use the
Lehrenfeld--Sch\"oberl (LS) stabilization \cite{lehsc:16}, as
in \cite{CoDPE:16} for
HHO methods and in \cite{MuWaY:15} for WG methods.
Although the LS stabilization does not use the reconstruction operator,
it is not a plain least-squares stabilization, since
an orthogonal projection is applied to the trace of the cell unknowns.
We mention that it is also
possible to achieve optimality \emph{without stabilization} for second-order PDEs
if one uses Raviart--Thomas functions of degree $k$ to reconstruct the gradient
(see \cite{AbErPi:18,DiPDM:18}). However, optimality is lost if one
reconstructs the gradient in larger polynomial spaces (the convergence rate is
in general $O(h^k)$), since the normal
component of the reconstructed gradient on the mesh faces is too rich to be captured
by the face unknowns.
Another possibility is to enrich the space for the gradient reconstruction by
suitable bubble functions based on the notion of $M$-decomposition devised for HDG methods \cite{CoFuS:17}.

\begin{table}[ht]
\begin{center}
\begin{tabular}{|l|ccc|c|l|}
\hline
unknowns&cell&face&grad&$k$&ref.\\
\hline\hline
WG&$k+2$&$k+2$&$[k+1]^d$&$k\ge0$&\cite{MuWaY:14}\\
&$k+2$&$k+2$&$k+1$&$k\ge0$&\cite{MuWaY:14}\\
&$k+2$&$k+1$&$k+1$&$k\ge0$&\cite{ZhaZh:15}\\
&1&1&$[1]^d$&$k=0$&\cite{YeZhZ:20}\\
\hline\hline
HHO&$k$&$k$&$[k]^d$&$k\ge1$&\cite{BoDPGK:18}\\
\hline
HHO(A)&$k+2$&$k+1$&$k$&$k\ge0$&present ($d=2$)\\
HHO(B)&$k+2$&$k+2$&$k$&$k\ge0$&present ($d\ge2$)\\
\hline
\end{tabular}\end{center}
\caption{Discrete unknowns in HHO and WG methods from the literature and the present work.
In the column `grad', the notation $[\cdot]^d$ means that the full gradient is approximated;
otherwise, only the normal derivative is approximated. For all the methods,
the integer $k$ is fixed by
the fact that the method
delivers an $O(h^{k+1})$ $H^2$-error estimate.}
\label{tab:HHO_WG}
\end{table}

To discretize fourth-order PDEs, HHO and WG methods use cell unknowns that
are meant to approximate the solution in each mesh cell, face unknowns that
are meant to approximate its trace on each mesh (inter)face,
and additional face unknowns that are meant to approximate either its full gradient trace
or only its normal derivative on each mesh (inter)face. The HHO and WG methods
from the literature and the present HHO methods are described in Table~\ref{tab:HHO_WG}
in terms of their discrete unknowns. To put all the methods on the same ground
and allow for a fair comparison, the polynomial degree
$k$ is such that all the methods in the table deliver an $O(h^{k+1})$ $H^2$-error estimate.
Consistently with the terminology adopted above for second-order elliptic PDEs,
the method can be viewed as \emph{optimal} if the order of the face unknowns approximating
the trace of the gradient (or of the normal derivative) is of degree $k$.
As seen from Table~\ref{tab:HHO_WG}, the WG methods from the literature do not
meet this criterion. For instance, the HHO method from \cite{BoDPGK:18}
with $k=1$ converges with one order higher than the WG method from \cite{YeZhZ:20}
while using the same discrete unknowns.
The lack of optimality is related to the use of a plain least-squares stabilization.
Instead, the HHO method from \cite{BoDPGK:18} and the present HHO methods
are optimal, and this is reflected by a more elaborate design of the stabilization.
Notice that for fourth-order PDEs, this also means that the Hessian (and not only the
Laplacian) has to be reconstructed locally.
In \cite{BoDPGK:18}, the stabilization design follows the spirit of \cite{DiPEr:15,DiPEL:14}
in that it uses a Hessian-based deflection reconstruction operator. In the present methods,
the design is performed in the spirit of the LS stabilization. Another difference with
\cite{BoDPGK:18} is that the present methods only introduce face unknowns approximating the
normal derivative of the solution on the mesh (inter)faces (and not the full gradient trace).
As a result, and despite the slight increase in the degree of its
face unknowns approximating the solution trace, the present HHO methods involve less
globally coupled unknowns than in \cite{BoDPGK:18}; see the discussion in
Remarks~\ref{rem:cost_HHO_2D} and~\ref{rem:cost_HHO_3D}. Moreover, we allow here for
$k\ge0$, whereas \cite{BoDPGK:18} requires $k\ge1$. We also mention that
the increase of cell unknowns compared to \cite{BoDPGK:18}
has a moderate impact on computational
costs owing to static condensation. This slight overhead is actually compensated by the
simplification in the stabilization term (see Section \ref{sec:Numerical example}
for further discussion).

Let us briefly summarize the main novelties and results of the present work:
(i) Two novel and computationally effective HHO methods leading to optimal
$O(h^{k+1})$ $H^2$-error estimates with polynomials of order $k\ge0$
to approximate the normal derivative;
(ii) An original design in 2D using, for the first time in HHO methods,
the canonical hybrid finite element in the stabilization;
(iii) The HHO methods do not feature stabilization parameters that must be
large enough (only positive), in contrast with dG and $C^0$-IPDG methods.
(iv) A numerical study showing the attractive performances of the proposed methods, which
in particular can outperform dG methods (except for low polynomial orders and
Voronoi-like meshes where the number of faces is quite large)
and even be competitive with $C^0$-IPDG and HCT methods on simplicial meshes;
(v) A variant of the HHO methods using a Nitsche-type boundary-penalty technique to weakly
enforce the boundary conditions.
We notice in particular that the development of Nitsche's boundary-penalty technique
is instrumental to deal with domains with curved boundary (in the wake of \cite{BurEr:18,BCDE:21}
for elliptic interface problems) and to derive a robust approximation method in the
case of singularly perturbed regimes. 
\Rev{These results are explored in our recent work \cite{DongErn2021singular}}. 
We also emphasize that our Nitsche technique does not need the penalty parameter to be large enough. This is the first time this property is met for fourth-order PDEs, and
to this purpose, we adapt ideas from \cite{L16:Nitsche,BCDE:21} derived for second-order PDEs. \Rev{Heuristically, the reason for circumventing the constraint on having a large enough penalty parameter is that the reconstruction operator in HHO methods avoids the need to introduce an additional consistency term as in the standard Nitsche method.}

As a final remark, we mention that our main error estimates are established
for an exact solution that belongs to the broken
Sobolev space $H^{k+3}(\mesh)$ (where $\mesh$ denotes the underlying mesh)
and to the Sobolev space $H^{2+s}(\Omega)$ with $s>\frac32$. This latter
assumption follows the rather classical paradigm in the analysis of nonconforming methods
and is invoked when bounding the consistency error.
As discussed in Remark~\ref{rem:regularity}, the regularity gap can be lowered to $s>1$
by adapting the techniques developed in \cite{ErnGuer2021}
and \cite[Chap.~40\&41]{Ern_Guermond_FEs_II_2021}
in the context of second-order elliptic PDEs.
We also notice that quasi-optimal error estimates for general loads
in $H^{-2}(\Omega)$ are derived in \cite{VZ2,VZ3}
for the Morley element
and the $C^0$-IPDG method (see also
\cite{carstensen2021lowerorder} for further results in the case of various
lowest-order methods). The techniques in \cite{VZ2,VZ3}
require to modify the right-hand side of
the discrete problem by means of bubble functions and a $C^1$-smoother.
These ideas have been adapted to HHO methods for second-order elliptic PDEs with
loads in $H^{-1}(\Omega)$ in \cite{ErnZa:20}. We expect that the extension
to the biharmonic problem could follow a similar path for $d=2$, whereas for
$d=3$, one difficulty is related, irrespective of the considered discretization
method, to the lack of a well-established and computable $C^1$-smoother of
arbitrary order.

The rest of this work is organized as follows.  We introduce some basic notation, the mesh assumptions, and some analysis tools in Section \ref{sec:Model problem and HHO methods}.  In Section \ref{sec:HHO 2D}, we introduce the HHO method in the 2D setting employing the canonical hybrid finite element to design the stabilization.  In Section \ref{sec:Stability and error analysis}, we present the stability and error analysis of the method introduced in Section \ref{sec:HHO 2D}. In Section \ref{sec:HHO 3D}, we present the second HHO method, this time valid in arbitrary dimension, and we outline the main changes in the stability and error analysis from Section~\ref{sec:Stability and error analysis}. \Rev{Our numerical results indicate that in two dimensions, the first HHO method from Section \ref{sec:HHO 2D} is more effective than the second  method from Section \ref{sec:HHO 3D}}. In Section \ref{sec:HHO-N}, we combine the above HHO methods with Nitsche's boundary-penalty technique. Finally, numerical results showcasing the computational advantages of the proposed HHO methods are presented in Section \ref{sec:Numerical example}.

\section{Model problem and discrete setting}\label{sec:Model problem and HHO methods}
In this section, we introduce some basic notation, the weak formulation of the model problem, and the discrete setting to formulate and analyze the HHO discretization.

\subsection{Basic notation and weak formulation} \label{Model problem}

We use standard notation for the Lebesgue and Sobolev spaces and, in particular, for the fractional-order Sobolev spaces, we consider the Sobolev--Slobodeckij seminorm based on the double integral. For an open, bounded, Lipschitz set $S$ in $\mathbb{R}^d$, $d\in\{1,2,3\}$, we denote by $(v,w)_S$ the $L^2(S)$-inner product, and we employ the same notation when $v$ and $w$ are vector- or matrix-valued fields. We denote by $\nabla w$ the (weak) gradient of $w$ and by $\nabla^2 w$ its (weak) Hessian. Let $\n$ be the unit outward normal vector on the boundary $\partial S$ of $S$. Assuming that the functions $v$ and $w$ are smooth enough, we have the following integration by parts formula:
\begin{equation} \label{eq:ipp1}
(\Delta^2 v,w)_S = (\nabla^2v,\nabla^2w)_S+(\nabla\Delta v,\n w)_{\partial S}-(\nabla^2v\n,\nabla w)_{\partial S}.
\end{equation}
Whenever the context is unambiguous, we denote by $\partial_n$ the (scalar-valued) normal derivative on $\partial S$ and by $\partial_t$ the ($\RR^{d-1}$-valued) tangential derivative.
We also denote by $\partial_{nn} v$ the (scalar-valued) normal-normal second-order derivative and by $\partial_{nt} v$ the ($\RR^{d-1}$-valued) normal-tangential second-order derivative. The integration by parts formula~\eqref{eq:ipp1} can then be rewritten as
\begin{equation} \label{eq:ipp2}
(\Delta^2 v,w)_S = (\nabla^2v,\nabla^2w)_S+(\partial_n\Delta v,w)_{\partial S}-(\partial_{nn}v,\partial_n w)_{\partial S}-(\partial_{nt}v,\partial_t w)_{\partial S}.
\end{equation}
In what follows, the set $S$ is always a polytope so that its boundary can be decomposed into a finite union of planar faces with disjoint interiors. Expressions involving the tangential derivative on $\partial S$ are then implicitly understood to be evaluated as a summation over the faces composing $\partial S$.

Using the above integration by parts formula, the following weak formulation of the model problem \eqref{pde} is classically derived: Find  $u\in H^2_0(\Omega)$ such that
\begin{equation}\label{weak form}
(\nabla^2 u, \nabla^2 v )_\Omega = (f,v)_\Omega, \qquad  \forall v \in H^2_0(\Omega),
\end{equation}
The well-posedness of~\eqref{weak form} is proven, e.g., in~\cite[Section~1.5]{GirRa:86}.

\begin{remark}[Non-homogeneous conditions] \label{rem:non_homo}
Since the domain $\Omega$ is a polytope, its boundary can be split into $\{\partial \Omega_i\}_{i=1}^N$ $(d-1)$-dimensional planar faces with disjoint interiors. Let $g_D $ and $g_N$ be boundary data such that $g_D|_{\partial \Omega_i} \in H^{\frac32} (\partial  \Omega_i)$ and
$g_N|_{\partial \Omega_i} \in H^{\frac12} (\partial   \Omega_i)$ for all $i\in\{1,\ldots,N\}$, as well as $g_D  \in C^0(\partial \Omega)$. Then, one can enforce the non-homogeneous boundary conditions $u=g_D$ and $\partial_nu=g_N$ on all the faces $\partial\Omega_i$; see \cite[Sections~1.5 \&1.6]{Grisvard}.  
\end{remark}

\subsection{Inverse, trace, and Poincar\'e inequalities}\label{Inverse inequality}

Let $\mesh$ be a mesh covering $\Omega$ exactly.
The mesh $\mesh$ can have cells that are disjoint open polytopes in $\RR^d$ (with planar faces),
and hanging nodes are possible. A generic mesh cell is denoted by
$K\in\mesh$, its diameter by $h_K$, and its unit outward normal by $\n_K$.
We assume that the mesh belongs to a shape-regular
mesh sequence $(\mesh)_{h>0}$ in the sense of \cite{DiPEr:15}.
In a nutshell, any mesh $\mesh$ admits a matching simplicial submesh $\mesh'$
such that any cell (or face) of $\mesh'$ is a subset of exactly one cell (or at
most one face) of
$\mesh$. Moreover, there exists a mesh-regularity parameter
$\rho > 0$ such that for all $h>0$, all $K \in \mesh$,
and all $S \in \mesh'$ such that $S \subset K$,
we have $\rho h_S \leq r_S$ and $\rho h_K \leq h_S$,
where $r_S$ denotes the inradius of the simplex $S$.
The mesh faces are collected in the set $\Fall$, which is split as the set
$\Fb$ containing the mesh boundary faces and the set $\Fint$ containing the mesh
interfaces. In this work, we make the mild additional assumption that the
mesh faces are connected; the reason for this is that we will consider an
approximation operator on the mesh faces that is only $H^1$-stable, and not $L^2$-stable,
so that we will need to invoke some polynomial approximation properties directly
on the mesh faces (see \eqref{eq:app_canon_F}).
Let $\n_F$ denote the unit normal vector orienting any mesh face $F\in\Fall$.
The direction of $\n_F$ is arbitrary, but fixed once and for all, for all $F\in \Fint$,
and $\n_F:=\n$ for all $F\in\Fb$.
For any mesh cell $K\in\mesh$, the mesh faces composing its boundary
$\partial K$ are collected in the set $\FK$.
The shape-regularity of the mesh sequence implies that for all $K\in\mesh$ and all
$F\in\FK$ with diameter $h_F$, the length scales $h_K$ and $h_F$ are uniformly
equivalent and that $\#(\FK)$ is uniformly bounded.

Let us recall some important analysis tools. We refer the reader, e.g., to \cite[Sec.~1.4]{DiPietroErn} for the proofs of Lemma~\ref{lemma: Inverse inequality} and Lemma~\ref{lemma: trace inequality} and to \cite{Verf_quasi} for the derivation of the Poincar\'e inequality in $H^2$ from the corresponding inequality in $H^1$. For all $K\in\mesh$ and all $k\ge0$, $\mathbb{P}^k(K)$ denotes the linear space composed of the restriction to $K$ of polynomials of total degree at most $k$.

\begin{lemma}[Discrete inverse and trace inequalities]\label{lemma: Inverse inequality}
Let $\mesh$ belong to a shape-regular mesh sequence and let $k\ge0$.
There are constants $C_{\mathrm{inv}}^{\mathrm{tr}}$ and $C_{\mathrm{inv}}$, only depending on the mesh shape-regularity, the polynomial degree $k$, and the space dimension $d$, such that
for all $v_h\in\mathbb{P}^k(K)$ and all $K\in\mesh$,
\begin{align}\label{trace_inv}
\|{v}_h\|_{\dK} &\leq C_{\mathrm{inv}}^{\mathrm{tr}} h_K^{-\frac12}  \|{v}_h\|_{ K},\\
\label{H1_inv}
\|\nabla {v}_h\|_{K} &\leq C_{\mathrm{inv}} h_K^{-1}  \|{v}_h\|_{ K},
\end{align}
\end{lemma}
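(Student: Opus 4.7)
The plan is to reduce the problem from general polytopal cells to simplices by invoking the shape-regularity assumption. Recall that every cell $K\in\mesh$ admits a matching simplicial submesh $\mesh'$ whose simplices $S\subset K$ satisfy $\rho h_S \leq r_S$ and $\rho h_K \leq h_S$. Since $v_h\in\mathbb{P}^k(K)$, the restriction $v_h|_S$ lies in $\mathbb{P}^k(S)$ for every such simplex, so standard reference-element arguments apply piecewise.

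For the inverse inequality \eqref{H1_inv}, I would fix a reference simplex $\hat{S}$ and exploit the norm equivalence on the finite-dimensional space $\mathbb{P}^k(\hat S)$ to get $\|\nabla\hat{v}_h\|_{\hat S}\le \hat C\,\|\hat v_h\|_{\hat S}$, where $\hat C$ depends only on $k$ and $d$. Pulling back through the affine diffeomorphism $S\to\hat S$ and using the standard Jacobian estimates (controlled by the aspect ratio $h_S/r_S\le \rho^{-1}$) yields
\begin{equation*}
\|\nabla v_h\|_S \le C\, h_S^{-1}\,\|v_h\|_S,
\end{equation*}
with $C$ depending only on $\rho$, $k$, and $d$. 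Squaring and summing over $S\in\mesh'$ with $S\subset K$, then using $h_S\ge \rho h_K$, gives \eqref{H1_inv}.

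For the trace inequality \eqref{trace_inv}, I would decompose $\partial K$ into the $(d-1)$-dimensional faces of those simplices $S\subset K$ whose closure meets $\partial K$. Applying the analogous norm equivalence on $\mathbb{P}^k(\hat S)$ between the $L^2$-norms on $\hat S$ and on $\partial\hat S$, and scaling back via the affine map to $S$, yields
\begin{equation*}
\|v_h\|_{\partial S\cap\partial K} \le C\, h_S^{-1/2}\,\|v_h\|_S.
\end{equation*}
Squaring, summing over the relevant simplices, and using $h_S\ge \rho h_K$ again produces \eqref{trace_inv}.

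The main technical point is to make sure the resulting constants depend only on $\rho$, $k$, and $d$, and in particular not on the number of simplices composing $K$. This number is itself controlled by shape-regularity: the lower bound $|S|\ge c_d r_S^d \ge c_d(\rho h_S)^d \ge c_d \rho^{2d} h_K^d$ combined with $|K|\le c_d' h_K^d$ caps $\#\{S\in\mesh':S\subset K\}$ by a constant depending only on $\rho$ and $d$. Once this uniform bound is established, the piecewise estimates aggregate cleanly and the two inequalities follow.
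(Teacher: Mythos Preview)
Your argument is correct and is exactly the standard route the paper defers to by citing \cite[Sec.~1.4]{DiPietroErn}: pass to the matching simplicial submesh, use norm equivalence on the reference simplex plus affine scaling, and sum over $S\subset K$. One small remark: your final paragraph bounding $\#\{S\subset K\}$ is not needed here, since in both estimates the sums $\sum_{S\subset K}\|v_h\|_S^2$ and $\sum_{S\subset K}\|v_h\|_{\partial S\cap\partial K}^2$ recombine directly into $\|v_h\|_K^2$ and $\|v_h\|_{\partial K}^2$ without any overcounting.
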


\begin{lemma}[Multiplicative trace inequality]\label{lemma: trace inequality}
Let $\mesh$ belong to a shape-regular mesh sequence.
There is a constant $C_{\mathrm{mt}}$, only depending on the mesh shape-regularity and the space dimension $d$, such that for all $v\in H^1(K)$ and all $K\in\mesh$,
\begin{equation}\label{mult_tr}
\|{v} \|_{\dK} \leq C_{\mathrm{mt}} \big(
h_K^{-\frac12}  \|{v}\|_{ K}
+\|{v}\|_{ K}^{\frac12}  \|\nabla {v}\|_{ K}^{\frac12}\big).
\end{equation}
\end{lemma}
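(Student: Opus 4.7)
The plan is to reduce the estimate to the case of a single shape-regular simplex via the matching simplicial submesh $\mesh'$ provided by the mesh-regularity assumption. Each $K\in\mesh$ is the union of simplices $S\in\mesh'$ with $h_S\sim h_K$ and inradii satisfying $r_S\sim h_S\sim h_K$. Each face $F\in\FK$ is a (possibly trivial) union of faces of such simplices, and the total number of simplicial faces contributing to $\dK$ is uniformly bounded; denote this collection by $\mathcal{F}_{\dK}'$ and, for each $F'\in\mathcal{F}_{\dK}'$, let $S_{F'}\subset K$ be a simplex of $\mesh'$ having $F'$ as a face.

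The core step is the multiplicative trace inequality on a single shape-regular simplex $S$. Let $x_S$ denote its incenter and set $\vecb(x):=x-x_S$, so that $\nabla\cdot\vecb=d$, $\|\vecb\|_{L^\infty(S)}\le h_S$, and $\vecb\cdot\n = r_S$ on $\partial S$ (the incenter is equidistant from all faces). Applying the divergence theorem to $v^2\vecb$ (for $v\in C^1(\overline{S})$ first, then by density for $v\in H^1(S)$) gives
\begin{equation*}
r_S\,\|v\|_{\partial S}^2 \le \int_{\partial S}v^2(\vecb\cdot\n)\ud s = d\,\|v\|_{S}^2 + 2\int_S v\,(\vecb\cdot\nabla v)\ud x \le d\,\|v\|_S^2 + 2h_S\,\|v\|_S\,\|\nabla v\|_S,
\end{equation*}
where the last bound uses Cauchy--Schwarz. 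Dividing by $r_S\sim h_S$ and applying $\sqrt{a+b}\le\sqrt{a}+\sqrt{b}$ yields the claimed inequality on $S$.

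Finally, writing $\|v\|_{\dK}^2 = \sum_{F'\in\mathcal{F}_{\dK}'}\|v\|_{F'}^2 \le \sum_{F'}\|v\|_{\partial S_{F'}}^2$, invoking the simplex estimate on each $S_{F'}$, and exploiting $h_{S_{F'}}\sim h_K$ together with $\|v\|_{S_{F'}}\le\|v\|_K$, $\|\nabla v\|_{S_{F'}}\le\|\nabla v\|_K$ and the uniform bound on $\#(\mathcal{F}_{\dK}')$ concludes the proof. The only genuine obstacle is the vector-field construction, which has no clean analogue on a general polyhedron but is straightforward on a simplex via the incenter --- hence the detour through $\mesh'$.
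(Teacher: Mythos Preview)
The paper does not actually prove this lemma; it defers to \cite[Sec.~1.4]{DiPietroErn}. Your argument is correct and is essentially the standard one found in that reference: reduce to simplices via the matching submesh $\mesh'$, prove the estimate on each simplex by applying the divergence theorem to $v^2(x-x_S)$ with $x_S$ the incenter (so that $(x-x_S)\cdot\n_S=r_S$ on every face), and then sum using the uniform comparability $h_S\sim h_K$ and the uniform bound on the number of subsimplices. One cosmetic remark: on a simplex the identity $\vecb\cdot\n=r_S$ holds with equality on $\partial S$, so your first displayed inequality is in fact an equality; but this does not affect the argument.
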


\begin{lemma}[Poincar\'e inequality]\label{lemma: Poincare inequality}
Let $\mesh$ belong to a shape-regular mesh sequence.
There is a constant $C_{\mathrm{P}}$, only depending on the mesh shape-regularity and the space dimension $d$, such that for all $v\in H^2(K)^\perp:=\{v\in H^2(K)\;|\; (v,\xi)_K=0, \;\forall \xi \in \mathbb{P}^1(K)\}$ and all $K\in\mesh$,
\begin{equation}\label{P_ineq}
h_K^{-2} \|{v} \|_{K} + h_K^{-1} \|\nabla {v} \|_{K}
\leq C_{\mathrm{P}} \|\nabla^2 {v}\|_{ K}.
\end{equation}
\end{lemma}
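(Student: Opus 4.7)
The plan is to derive the $H^2$-Poincar\'e inequality by bootstrapping from the standard $L^2$ bound and the $H^1$-Poincar\'e inequality, following the strategy of \cite{Verf_quasi} mentioned in the text. The overall idea is to exploit orthogonality to $\mathbb{P}^1(K)$ via the $L^2$-projection for the $L^2$ estimate, and to combine a Bramble--Hilbert-type projector with the polynomial inverse estimate for the $H^1$ estimate.

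First, I establish the $L^2$ piece $h_K^{-2}\|v\|_K \le C\|\nabla^2 v\|_K$. Let $\pi_1^K$ denote the $L^2(K)$-orthogonal projection onto $\mathbb{P}^1(K)$. Since $v \in H^2(K)^\perp$, we have $\pi_1^K v = 0$, so
\[
\|v\|_K = \|v-\pi_1^K v\|_K = \inf_{p\in\mathbb{P}^1(K)}\|v-p\|_K.
\]
Standard polynomial approximation (Bramble--Hilbert) on shape-regular cells then yields $\inf_p\|v-p\|_K \le C h_K^2\|\nabla^2 v\|_K$. On the polytopal cell $K$, this estimate is obtained by transferring to the matching shape-regular simplicial submesh $\mesh'$ introduced in Section~\ref{sec:Model problem and HHO methods} (applying the estimate on each simplex via an affine pull-back to a reference simplex and summing), so the constant depends only on $\rho$ and $d$.

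Second, I turn to the $H^1$ piece $h_K^{-1}\|\nabla v\|_K \le C\|\nabla^2 v\|_K$. Fix an approximation operator $I_1^K: H^2(K)\to \mathbb{P}^1(K)$ of Bramble--Hilbert type (for instance, the averaged Taylor polynomial) that simultaneously satisfies
\[
\|v-I_1^K v\|_K \le C h_K^2\|\nabla^2 v\|_K, \qquad \|\nabla(v-I_1^K v)\|_K \le C h_K\|\nabla^2 v\|_K,
\]
on shape-regular polytopes (again via the simplicial submesh). Writing $\nabla v = \nabla(v-I_1^K v)+\nabla I_1^K v$, the first term is already controlled by $C h_K\|\nabla^2 v\|_K$. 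For the second, since $I_1^K v \in \mathbb{P}^1(K)$, the discrete inverse inequality \eqref{H1_inv} yields $\|\nabla I_1^K v\|_K \le C_{\mathrm{inv}} h_K^{-1}\|I_1^K v\|_K$. The triangle inequality combined with the first step of the proof gives
\[
\|I_1^K v\|_K \le \|v\|_K + \|v-I_1^K v\|_K \le C h_K^2\|\nabla^2 v\|_K,
\]
so $\|\nabla I_1^K v\|_K \le C h_K \|\nabla^2 v\|_K$. Adding the two contributions delivers the required bound.

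The main technical obstacle is the Bramble--Hilbert-type polynomial approximation on a general polytopal cell: these estimates are not immediate on an arbitrary polytope and rely crucially on the existence of the matching shape-regular simplicial submesh guaranteed by the mesh-regularity assumption. Once those approximation estimates are available, the argument above is purely algebraic and produces a constant $C_{\mathrm{P}}$ depending only on the mesh shape-regularity and the space dimension $d$, as claimed.
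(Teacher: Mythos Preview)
The paper does not give a self-contained proof of this lemma; it only points to \cite{Verf_quasi} for the idea of deriving the $H^2$-Poincar\'e inequality from the $H^1$ one. Your argument supplies exactly such a derivation and is correct in substance: the $L^2$-piece follows from orthogonality to $\mathbb{P}^1(K)$ combined with Bramble--Hilbert, and the $H^1$-piece is obtained by splitting $\nabla v=\nabla(v-I_1^K v)+\nabla I_1^K v$ and controlling the polynomial part through the inverse inequality~\eqref{H1_inv} and the already established $L^2$-bound. This is a standard and clean route, and it is in the same spirit as the reference the paper cites.

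One technical point deserves tightening. Your justification of the Bramble--Hilbert estimate on the polytopal cell $K$ by ``applying the estimate on each simplex \ldots\ and summing'' is not quite right as written: summing subsimplex estimates would produce a different polynomial on each subsimplex, whereas you need a single $p\in\mathbb{P}^1(K)$ (and likewise a single $I_1^K v$). The fix is standard and is consistent with the mesh-regularity assumptions of Section~\ref{sec:Model problem and HHO methods}: take $I_1^K$ to be the averaged Taylor polynomial over a ball contained in one of the subsimplices of $K$ (whose radius is uniformly equivalent to $h_K$ by shape regularity), or simply invoke the mechanism behind Lemma~\ref{lemma: Polynomial approximation}. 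With that adjustment the argument is complete and yields a constant depending only on $\rho$ and $d$.
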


\begin{remark}[Discrete inverse inequality on faces]
Similarly to~\eqref{H1_inv} and recalling that the diameter of any face $F\in\FK$ is uniformly equivalent to $h_K$, one can prove that there is a constant  $\tilde{C}_{\mathrm{inv}}$\Rev{,} only depending on the mesh shape-regularity, the polynomial degree $k$, and the space dimension $d$, such that
\begin{equation}\label{inv_tang}
\|\partial_t {v}_h\|_{F} \leq \tilde{C}_{\mathrm{inv}} h_K^{-1}  \|{v}_h\|_{ F},
\end{equation}
for all $v_h\in\mathbb{P}^k(K)$, all $K\in\mesh$, and all $F\in\FK$.
\end{remark}

\begin{remark}[Fractional multiplicative trace inequality]
Let $\mesh$ belong to a shape-regular mesh sequence. Let $s\in (\frac12,1]$.
There is a constant $C_{\mathrm{fmt}}$, only depending on the mesh shape-regularity
and the space dimension $d$, such that for all $v\in H^s(K)$ and all $K\in\mesh$,
\begin{equation}\label{f_mult_tr}
\|{v} \|_{\dK} \leq C_{\mathrm{fmt}} \big(
h_K^{-\frac12}  \|{v}\|_{K}
+ h_K^{s-\frac12}  |v|_{H^s(K)}\big).
\end{equation}
The proof when $K$ is a simplex can be found in \cite[Lem.~7.2]{ErnGuermond:17}.
In the general case,  for every subface of a
face in $\FK$, one carves a subsimplex inside $K$ whose height is uniformly
equivalent to $h_K$. Notice that for $s=1$, \eqref{f_mult_tr} is a simple consequence
of \eqref{mult_tr} and Young's inequality since $|v|_{H^1(K)}=\|\nabla v\|_{K}$.
\end{remark}

\subsection{Polynomial approximation in cells and on faces}

Let $k\ge0$ and let $\Pi^{k+2}_{K}$ be the $L^2$-orthogonal projection onto $\mathbb{P}^{k+2}(K)$. Since the mesh cells can be decomposed into a finite number of subsimplices, the approximation properties of $\Pi^{k+2}_{K}$ can be established by proceeding as in \cite[Lem.~5.4]{ErnGuermond:17}.

\begin{lemma}[Polynomial approximation in $K$]\label{lemma: Polynomial approximation}
Let $\mesh$ belong to a shape-regular mesh sequence.
Let $k\geq 0$.
There is a constant $C_{\mathrm{app}}$, only depending on the mesh shape-regularity, the polynomial degree $k$, and the space dimension $d$, such that
for all $t\in[0,k+3]$, all $m\in\{0,\ldots,\lfloor t\rfloor\}$, all $v\in H^t(K)$, and all $K\in\mesh$,
\begin{equation}\label{pol_app}
|v - \Pi^{k+2}_{K}(v)|_{H^m(K)}
\leq C_{\mathrm{app}} h^{t-m}_K |{v} |_{H^t(K)}.
\end{equation}
\end{lemma}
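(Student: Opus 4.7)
The plan is to reduce the claim to standard polynomial approximation via a Bramble--Hilbert / Dupont--Scott construction on a ball inscribed in a simplex of the matching submesh, exploiting the shape-regularity assumption, and then to transfer the bound to the $L^2$-projection through its best-approximation property and an inverse inequality. This follows the route of \cite[Lem.~5.4]{ErnGuermond:17}, already invoked by the authors.

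First, I would treat the case of integer $t$ and $m$. By the shape-regularity assumption, each polytopal cell $K$ contains a simplex $S \in \mesh'$ with $\rho h_S \le r_S$ and $\rho h_K \le h_S$, so $K$ contains an inscribed ball $B\subset S$ of radius $r_S \ge \rho^2 h_K$. Since $\diam(K)=h_K$, the cell $K$ is also contained in a ball concentric with $B$ of radius bounded by $C h_K$, which is the star-shaped configuration required by the averaged Taylor construction. The Dupont--Scott averaged Taylor polynomial $Q = Q^{k+2}(v) \in \mathbb{P}^{k+2}(\mathbb{R}^d)$ built on $B$ then satisfies the classical estimate
\begin{equation*}
|v - Q|_{H^m(K)} \le C\, h_K^{t-m}\, |v|_{H^t(K)}, \qquad 0 \le m \le t \le k+3,\ t,m\in\mathbb{N},
\end{equation*}
with $C$ depending only on the mesh shape-regularity, $k$, and $d$.

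Second, I would transfer this bound to the $L^2$-projection $\Pi^{k+2}_K(v)$. For $m=0$, the optimality of $\Pi^{k+2}_K$ in $L^2(K)$ gives directly $\|v - \Pi^{k+2}_K(v)\|_K \le \|v-Q\|_K$. For $m\ge 1$, the triangle inequality yields
\begin{equation*}
|v - \Pi^{k+2}_K(v)|_{H^m(K)} \le |v - Q|_{H^m(K)} + |Q - \Pi^{k+2}_K(v)|_{H^m(K)},
\end{equation*}
and since $Q - \Pi^{k+2}_K(v) \in \mathbb{P}^{k+2}(K)$, $m$ applications of the discrete inverse inequality \eqref{H1_inv} followed by the triangle inequality produce
\begin{equation*}
|Q - \Pi^{k+2}_K(v)|_{H^m(K)} \le C h_K^{-m} \|Q - \Pi^{k+2}_K(v)\|_K \le C h_K^{-m} \bigl( \|v-Q\|_K + \|v-\Pi^{k+2}_K(v)\|_K \bigr),
\end{equation*}
which is then bounded by $C h_K^{t-m} |v|_{H^t(K)}$ using the $m=0$ case.

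The non-integer $t$ case would follow by real ($K$-method) interpolation between the integer estimates for $\lfloor t\rfloor$ and $\lfloor t \rfloor +1$, giving the bound with the Sobolev--Slobodeckij seminorm on the right-hand side; alternatively one may invoke a direct fractional Bramble--Hilbert argument on the averaged Taylor polynomial. The main obstacle in this program is the geometric step: exhibiting an inscribed ball of radius comparable to $h_K$ inside an arbitrary polytopal cell and controlling the star-shapedness of $K$ with respect to it, which requires careful use of the shape-regularity assumption via the simplicial submesh $\mesh'$. Once this geometric input is available, the Bramble--Hilbert machinery and the $L^2$-optimality combine routinely to yield the claim.
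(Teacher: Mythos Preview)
The paper does not supply a proof; it refers to \cite[Lem.~5.4]{ErnGuermond:17} and emphasizes that ``the mesh cells can be decomposed into a finite number of subsimplices''. Your two-step program---build a near-best polynomial via an averaged Taylor/Bramble--Hilbert argument, then transfer to $\Pi^{k+2}_K$ using $L^2$-optimality and the inverse inequality~\eqref{H1_inv}---is exactly the route of that reference, and both the transfer step and the treatment of non-integer $t$ by interpolation are correct.

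The gap is in the first step, and it is more serious than you indicate. From one subsimplex $S\subset K$ you correctly extract a ball $B$ of radius $r_S\ge\rho^2 h_K$, but the assertion ``which is the star-shaped configuration required by the averaged Taylor construction'' is unwarranted: a polytopal cell admitting a shape-regular simplicial submesh need not be star-shaped with respect to \emph{any} ball of radius comparable to $h_K$ (an L-shaped cell decomposed into two comparable triangles is already a counterexample). Hence the obstacle you flag at the end---``controlling the star-shapedness of $K$''---cannot be resolved by a more careful use of shape-regularity, because in general there is no star-shapedness to control. The argument in the cited reference uses the submesh differently: one builds the averaged Taylor polynomial $Q_S$ on each subsimplex $S\subset K$ (every simplex being star-shaped with respect to its own inscribed ball) and then compares the $Q_S$ across face-adjacent subsimplices to produce a single $Q\in\mathbb{P}^{k+2}(K)$ satisfying $|v-Q|_{H^m(K)}\le C h_K^{t-m}|v|_{H^t(K)}$; the uniform bound on the number of subsimplices per cell keeps the constant under control. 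Once this is in place, the rest of your outline goes through.
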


Another useful property of $\Pi^{k+2}_{K}$ results from the multiplicative trace inequality \eqref{mult_tr} and the Poincar\'e inequality \eqref{P_ineq}. Indeed, we infer that there is a constant $C_\Pi$, only depending on the mesh shape-regularity, the polynomial degree $k$, and the space dimension $d$, such that for all $v\in H^2(K)$ and all $K\in\mesh$,
\begin{equation} \label{eq:approx_faces}
h_K^{-\frac32}\|v-\Pi^{k+2}_{K}(v)\|_{\dK} + h_K^{-\frac12}\|\nabla(v-\Pi^{k+2}_{K}(v))\|_{\dK}
\le C_\Pi \|\nabla^2(v-\Pi^{k+2}_{K}(v))\|_K.
\end{equation}

We will use two operators for the polynomial approximation on the mesh faces. The first one is an $L^2$-orthogonal projection. Specifically, letting $\mathbb{P}^{k}(\FK) := \times_{F\in\FK} \mathbb{P}^{k}(F)$ for all $k\ge0$ and all $K\in\mesh$, we denote by $\Pi^k_{\dK}$ the
$L^2$-orthogonal projection onto $\mathbb{P}^{k}(\FK)$. Notice that $\Pi^k_{\dK}(v)$ can be computed independently for each face $F\in\FK$. The second operator is specific to the 2D setting where the mesh faces are straight segments. On the reference interval $\hat{I}:= (-1,1)$, the canonical hybrid finite element of degree $(k+1)$ has for its degrees of freedom the value at the two endpoints and, for $k\ge1$, the \Rev{integrals on $\hat{I}$ weighted by} a chosen set of basis functions in $\mathbb{P}^{k-1}(\hat{I})$ (see, e.g., \cite[Sec.~6.3.3~\&~7.6]{Ern_Guermond_FEs_I_2021} or \cite[Thm.~3.14]{schwab}). For all $F\in\Fall$, let $J_F^{k+1}:H^1(F)\rightarrow \mathbb{P}^{k+1}(F)$ be the corresponding interpolation operator generated using geometric affine mappings. Then, the two key identities satisfied by $J_F^{k+1}$ are for all $v\in H^1(F)$,
\begin{equation} \label{eq:prop_canon_F}
\big( \partial_t(v-J_F^{k+1}(v)),\xi\big)_F=0,\; \forall \xi\in \mathbb{P}^k(F),
\qquad
(v-J_F^{k+1}(v),\theta)_F=0,\; \forall \theta\in \mathbb{P}^{k-1}(F),
\end{equation}
or, in more compact form,
$\Pi_F^k(\partial_t v)=\partial_t(J_F^{k+1}(v))$ and $\Pi^{k-1}_F\circ J_F^{k+1}=\Pi^{k-1}_F$.
Moreover, $J_{F}^{k+1}$ satisfies the following approximation properties: There is $C_J$, only depending on the mesh shape-regularity, the polynomial degree $k$, and the space dimension $d$, such that
\begin{equation} \label{eq:app_canon_F}
\|v-J_F^{k+1}(v)\|_F \le C_Jh_F\|\partial_tv\|_F, \qquad
\|v-J_F^{k+1}(v)\|_F \le C_Jh_F^2\|\partial_{tt}v\|_F,
\end{equation}
for all $v\in H^1(F)$ and all $v\in H^2(F)$, respectively. Notice that
for $k=0$, $J_F^1$ coincides with the Lagrange interpolate on $F$ based
on its two endpoints.

In what follows, it is convenient to rewrite~\eqref{eq:prop_canon_F} and~\eqref{eq:app_canon_F} on the whole boundary of every mesh cell $K\in\mesh$. Letting $H^l(\FK):=\{v\in L^2(\dK)\;|\; v|_F\in H^l(F),\;\forall F\in\FK\}$ with $l\in\{1,2\}$, $J_{\dK}^{k+1}:H^1(\FK)\rightarrow \mathbb{P}^{k+1}(\FK)$ is defined facewise by setting $J_{\dK}^{k+1}(v)|_F:=J_F^{k+1}(v|_F)$ for all $v\in H^1(\FK)$. Recalling that the tangential derivative is understood to act facewise, we obtain for all $v\in H^1(\FK)$,
\begin{equation} \label{eq:prop_canon_dK}
\big( \partial_t(v-J_{\dK}^{k+1}(v)),\xi\big)_{\dK}=0,\; \forall \xi\in \mathbb{P}^k(\FK),
\qquad
(v-J_{\dK}^{k+1}(v),\theta)_{\dK}=0,\; \forall \theta\in \mathbb{P}^{k-1}(\FK).
\end{equation}
Moreover, there is $\tilde{C}_J$ having the same dependencies as $C_J$ such that
\begin{equation} \label{eq:app_canon_dK}
\|v-J_{\dK}^{k+1}(v)\|_{\dK} \le \tilde{C}_Jh_K\|\partial_tv\|_{\dK}, \qquad
\|v-J_{\dK}^{k+1}(v)\|_{\dK} \le \tilde{C}_Jh_K^2\|\partial_{tt}v\|_{\dK},
\end{equation}
for all $v\in H^1(\FK)$ and all $v\in H^2(\FK)$, respectively, where we used that
$h_F$ and $h_K$ are uniformly equivalent for all $F\in\FK$.

\section{HHO method for the 2D biharmonic problem} \label{sec:HHO 2D}

Let $k\geq 0$ be the polynomial degree. For all $K\in \mesh$, the local HHO space is
\begin{equation}\label{HHO space 2D}
\fesE: =\mathbb{P}^{k+2}(K)
\times
\mathbb{P}^{k+1}(\FK)
\times
\mathbb{P}^{k}(\FK).
\end{equation}
A generic element in  $\fesE$ is denoted $\hat{v}_K := (v_K, v_{\dK}, \gamma_{\dK})$ with $v_K \in \mathbb{P}^{k+2}(K)$, $v_{\dK} \in \mathbb{P}^{k+1}(\FK)$, and  $\gamma_{\dK} \in \mathbb{P}^{k}(\FK)$. The first component of $\hat{v}_K$ aims at representing the solution inside the
mesh cell, the second its trace on the cell boundary, and the third its normal derivative on the cell boundary (along the direction of the outward normal $\n_K$). In what follows, it is implicitly understood that within integrals over $\dK$, the symbol $\partial_n$ means $\n_K{\cdot}\nabla$.

\subsection{Reconstruction and stabilization}

The HHO method is formulated locally by means of a reconstruction and a stabilization operator. The local reconstruction operator $R_K: \fesE \rightarrow \mathbb{P}^{k+2}(K)$ is such that, for all $\hat{v}_K:= (v_K,v_{\dK}, \gamma_{\dK}) \in \fesE$,
$R_K(\hat{v}_K)\in \mathbb{P}^{k+2}(K)$ is determined by solving the following well-posed problem:
\begin{equation}\label{reconstruction}
\begin{aligned}
(\nabla^2 R_K(\hat{v}_K), \nabla^2 w)_K ={}& (\nabla^2  {v}_K, \nabla^2 w)_K
+ (v_K -v_{\dK} , \partial_n \Delta  w)_{\dK}
- (\partial_n v_K - \gamma_{\dK},  \partial_{nn}  w)_{\dK} \\
& 	- (\partial_t (v_K - v_{\dK}),  \partial_{nt}  w)_{\dK},  \quad \forall w \in  \mathbb{P}^{k+2}(K), \\
(R_K(\hat{v}_K),\xi)_K = {}& ( v_K,\xi)_K,
\quad \forall \xi \in \mathbb{P}^{1}(K).
\end{aligned}
\end{equation}
When computing $R_K(\hat{v}_K)$, one actually takes $w \in \mathbb{P}^{k+2}(K)^\perp:=\{w\in \mathbb{P}^{k+2}(K)\;|\; (w,\xi)_K=0,\; \forall \xi\in\mathbb{P}^{1}(K)\}$ since the equation is trivial whenever $w\in \mathbb{P}^{1}(K)$.
Moreover, owing to the integration by parts formula~\eqref{eq:ipp2}, we infer that
\begin{equation}\label{eq:rec_ipp}
(\nabla^2 R_K(\hat{v}_K), \nabla^2 w)_K = (v_K, \Delta^2 w)_K
- (v_{\dK} , \partial_n \Delta  w)_{\dK}
+ (\gamma_{\dK},  \partial_{nn}  w)_{\dK}
+ (\partial_t v_{\dK},  \partial_{nt}  w)_{\dK}.
\end{equation}
This expression shows that in the rightmost term on the right-hand side, we take advantage of the face component $v_{\dK}$ to represent the tangential derivative at the boundary of $K$. Notice also that for $k=0$, the second term on the right-hand side vanishes.

The local stabilization bilinear form $S_{\dK}$ is defined such that, for all $(\hat{v}_K, \hat{w}_K)\in \fesE \times \fesE$ with $\hat{v}_K:= (v_K,v_{\dK}, \gamma_{\dK})$ and
$\hat{w}_K:= (w_K,w_{\dK},\chi_{\dK})$,
\begin{equation}\label{def: stabilization}
\begin{aligned}
S_{\dK}(\hat{v}_K,\hat{w}_K)
:={}& h_K^{-3}
\big( J^{k+1}_{\dK}(v_{\dK}- v_K),J^{k+1}_{\dK}(w_{\dK}- {w}_K)\big)_{\dK} \\
& + h_K^{-1}
\big( \Pi^{k}_{\dK}(\gamma_{\dK}- \partial_n {v}_K),
\Pi^{k}_{\dK}(\chi_{\dK}- \partial_n  {w}_K)
\big)_{\dK}.
\end{aligned}
\end{equation}
Notice the use of the interpolation operator $J^{k+1}_{\dK}$ for the first term on the right-hand side.
The reconstruction and stabilization operators are combined together to build 
\Rev{the} local bilinear form $a_K$ on $\fesE \times \fesE$ such that
\begin{equation} \label{eq:def_aK}
a_K(\hat{v}_K,\hat{w}_K):= (\nabla^2 R_K(\hat{v}_K),\nabla^2 R_K(\hat{w}_K))_K
+S_{\dK}(\hat{v}_K,\hat{w}_K).
\end{equation}

\subsection{The global discrete problem}

We define the global HHO space as
\begin{equation}
\fes : = \mathbb{P}^{k+2}(\mesh) \times \mathbb{P}^{k+1}(\Fall)\times \mathbb{P}^k(\Fall).
\end{equation}
A generic element in $\fes$ is denoted $\hat{v}_h:=(v_{\mesh},v_{\Fall},\gamma_{\Fall})$
with $v_{\mesh}:=(v_K)_{K\in\mesh}$, $v_{\Fall}:=(v_F)_{F\in\Fall}$, and $\gamma_{\Fall}
:=(\gamma_F)_{F\in\Fall}$, where $\gamma_F$ is meant to approximate the normal derivative in the direction of the unit normal vector $\n_F$ orienting $F$. For all $K\in \mesh$, the local components of $\hat{v}_h$ are collected in the triple
$\hat{v}_K:=(v_K,v_{\dK},\gamma_{\dK})\in\fesE$ with $v_{\dK}|_F: = v_F$ and  $\gamma_{\dK}|_F: = (\n_F {\cdot}\n_K)\gamma_{F}$ for all $F \in \FK$. Notice that the way the face components of $v_{\dK}$ are assigned follows the usual way of HHO methods for second-order elliptic PDEs, whereas the definition of the face components of $\gamma_{\dK}$ takes into account the orientation of the faces in $\FK$.
Furthermore, we enforce the homogeneous boundary conditions strongly by considering the subspace
\begin{equation}
\fesz : = \{\hat{v}_h\in \fes\;|\; v_F=\gamma_F=0,\;\forall F\in\Fb\}.
\end{equation}
The discrete HHO problem for the 2D biharmonic problem is as follows: Find $\hat{u}_h\in \fesz$ such that
\begin{equation}\label{discrete problem}
a_h(\hat{u}_h,\hat{w}_h) = \ell(w_{\mesh}), \qquad \forall \hat{w}_h\in \fesz,
\end{equation}
where the global discrete bilinear form $a_h$ and the global linear form $\ell$ are assembled cellwise as
\begin{equation}\label{bilinear form}
a_h(\hat{v}_h, \hat{w}_h):= \su a_K(\hat{v}_K, \hat{w}_K), \qquad
\ell(w_{\mesh}):= (f,w_{\mesh})_\Omega = \su (f,{w}_K)_K.
\end{equation}
Notice that only the first component of the triple $\hat{w}_h$ is used to evaluate the right-hand side in \eqref{discrete problem}.
An important observation is that the discrete problem~\eqref{discrete problem} is amenable to static condensation. Indeed,
the cell unknowns can be eliminated locally in every mesh cell, leading to a global problem where the only remaining unknowns
are those attached to the mesh faces, i.e.,
those in $\mathbb{P}^{k+1}(\Fall)\times \mathbb{P}^k(\Fall)$.

\begin{remark}[Comparison with \cite{BoDPGK:18}] \label{rem:cost_HHO_2D}
The present HHO method is cheaper than
the one from \cite{BoDPGK:18} where the globally coupled unknowns are in
$\mathbb{P}^{k}(\Fall) \times \mathbb{P}^{k}(\Fall;\RR^{2})$ with $k\geq 1$.
Indeed, in the present method, there are $(2k+3)$, $k\ge0$, unknowns per mesh interface,
whereas this number is $(3k+3)$, $k\ge1$, in \cite{BoDPGK:18}. On the other hand, the present method is slightly more expensive regarding static condensation since the number of cell unknowns is $\frac12(k+3)(k+4)$ vs.~$\frac12(k+2)(k+3)$ in \cite{BoDPGK:18}.
However, the slight overhead incurred in the static condensation is compensated
by the simpler form of the stabilization; see Section~\ref{sec:Numerical example}
for more insight into the computational costs.
\end{remark}

\begin{remark}[Variant]\label{cheap HHO}
It is also possible to consider the slightly cheaper choice $\fesE: =\mathbb{P}^{k+1}(K) \times \mathbb{P}^{k+1}({\dK})
\times \mathbb{P}^{k}({\dK})$ with $k\geq0$. With this choice, the number of cell unknowns to be statically condensed is slightly reduced,
but the size of the global problem coupling all the face unknowns is unchanged. Notice that this choice requires to modify the stabilization
bilinear form by setting
\begin{align}
S_{\dK}(\hat{v}_K,\hat{w}_K)
:={} & h_K^{-4}\big(\Pi^{k+1}_{K} (v_K-  R_K(\hat{v}_K)), \Pi^{k+1}_{K} (w_K- R_K(\hat{w}_K))\big)_{K} \nonumber \\
& + h_K^{-3}
\big( J^{k+1}_{\dK}(v_{\dK}- R_K(\hat{v}_K)),J^{k+1}_{\dK}(w_{\dK}- R_K(\hat{w}_K))\big)_{\dK} \nonumber \\
& + h_K^{-1} \big( \Pi^{k}_{\dK}(\gamma_{\dK}- \partial_n R_K(\hat{v}_K)),
\Pi^{k}_{\dK}(\chi_{\dK}- \partial_n R_K(\hat{w}_K))\big)_{\dK}. \label{eq:cheap_HHO}
\end{align}
The analysis of this variant will not be detailed herein, but this variant will be included in the numerical investigations presented in Section \ref{sec:Numerical example}.
\end{remark}

\begin{remark}[\Rev{Boundary conditions}] \label{rem:non-homo_2D}
In the non-homogeneous case, the HHO solution is sought in the space $\fes$, whereas the test functions are still taken in the space $\fesz$. The value of the components of the HHO solution attached to the mesh boundary faces is then assigned by means of the projections $J^{k+1}_{\dK}({g_D}|_{\dK})$ and $\Pi^{k}_{\dK}({g_N}|_{\dK})$. The convergence analysis proceeds as for homogeneous boundary conditions, up to straightforward adaptations when bounding the consistency error. Details are omitted for brevity. \Rev{It is also possible to consider the boundary conditions $u=\partial_{nn}u=0$ on $\partial \Omega$. The discrete HHO bilinear form is still defined as above, but the discrete problem \eqref{discrete problem} now involves the subspace $\fesz : = \{\hat{v}_h\in \fes\;|\; v_F=0,\;\forall F\in\Fb\}$, i.e., the boundary condition $u=0$ is still strongly enforced, whereas the boundary condition $\partial_{nn}u=0$ is weakly enforced.}
\end{remark}

\section{Stability and error analysis}\label{sec:Stability and error analysis}

In this section, we perform the stability and error analysis of the HHO method devised in the previous section for the 2D biharmonic problem. We first establish a local stability property for the bilinear form $a_K$ together with the well-posedness of the discrete problem~\eqref{discrete problem}. Then, we introduce a suitable reduction operator leading to optimal approximation properties, we bound the corresponding consistency error, and finally we derive the error estimate.

In what follows, the symbol $C$ denotes a generic positive constant whose value can change at each occurrence, provided this value only depends on the mesh shape-regularity, the polynomial degree $k$, and the space dimension $d$.

\subsection{Stability and well-posedness}

We equip the local HHO space $\fesE$  with the $H^2$-like seminorm such that for all $\hat{v}_K:=(v_K, v_{\dK}, \gamma_{\dK}) \in \fesE$,
\begin{equation}\label{H2_seminorm_elem}
|\hat{v}_K|^2_{\fesE}: = \|\nabla^2 v_K\|_K^2
+ h_K^{-3} \| v_{\dK} - v_K\|_{\dK}^2
+ h_K^{-1}\|  \gamma_{\dK} - \partial_n v_K\|_{\dK}^2.
\end{equation}

\begin{lemma}[Local stability and boundedness] \label{lem:stab_bnd}
There is a real number $\alpha>0$, depending only on the mesh shape-regularity, the polynomial degree $k$, and the space dimension $d$, such that for all $\hat{v}_K\in \fesE$ and all $K\in\mesh$,
\begin{equation}\label{local equivalent}
\alpha|\hat{v}_K|^2_{\fesE}
\leq  \|\nabla^2 R_K(\hat{v}_K) \|_{ K}^2
+ S_{\dK}(\hat{v}_K,\hat{v}_K)
\leq \alpha^{-1} |\hat{v}_K|^2_{\fesE}.
\end{equation}
\end{lemma}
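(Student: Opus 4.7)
The plan is to prove the two inequalities separately, using the defining equation \eqref{reconstruction} of the reconstruction operator as the bridge between $R_K(\hat{v}_K)$ and the components of $\hat{v}_K$. Throughout I write $A\lesssim B$ when $A\le CB$ with $C$ independent of $h_K$.

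For the upper bound, I would test \eqref{reconstruction} with $w=R_K(\hat{v}_K)$. Bounding the volume term by Cauchy--Schwarz and each boundary term by Cauchy--Schwarz combined with the discrete trace/inverse estimates \eqref{trace_inv}--\eqref{H1_inv} yields $\|\partial_n\Delta R_K(\hat{v}_K)\|_{\dK}\lesssim h_K^{-3/2}\|\nabla^2R_K(\hat{v}_K)\|_K$ and $\|\partial_{nn}R_K(\hat{v}_K)\|_{\dK}+\|\partial_{nt}R_K(\hat{v}_K)\|_{\dK}\lesssim h_K^{-1/2}\|\nabla^2R_K(\hat{v}_K)\|_K$; the tangential factor in the third boundary integral is handled via \eqref{inv_tang}, which gives $\|\partial_t(v_K-v_{\dK})\|_{\dK}\lesssim h_K^{-1}\|v_K-v_{\dK}\|_{\dK}$. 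Young's inequality and absorption then deliver $\|\nabla^2R_K(\hat{v}_K)\|_K^2\lesssim |\hat{v}_K|_{\fesE}^2$. The second stabilization term is immediate from the $L^2$-stability of $\Pi^k_{\dK}$, and for the first one I use $J^{k+1}_{\dK}(v_{\dK}-v_K)=(v_{\dK}-v_K)-(v_K-J^{k+1}_{\dK}(v_K))$ together with \eqref{eq:app_canon_dK} and a trace inverse on $\nabla^2 v_K$ to get $\|v_K-J^{k+1}_{\dK}(v_K)\|_{\dK}\lesssim h_K^{3/2}\|\nabla^2 v_K\|_K$.

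For the lower bound, the key step is bounding $\|\nabla^2 v_K\|_K$. Setting $\eta:=R_K(\hat{v}_K)-v_K$ and testing \eqref{reconstruction} with $w=\eta$ yields $\|\nabla^2\eta\|_K^2=B(\hat{v}_K,\eta)$, where $B$ gathers the three boundary terms on the right-hand side of \eqref{reconstruction}. The crucial observation is that on any $F\in\FK$ one has $\partial_n\Delta\eta|_F\in\mathbb{P}^{k-1}(F)$ and $\partial_{nn}\eta|_F,\partial_{nt}\eta|_F\in\mathbb{P}^k(F)$. This lets me insert $\Pi^{k-1}_F$ or $\Pi^k_F$ on the first factor of each boundary pairing; then, via the orthogonality identities \eqref{eq:prop_canon_dK}, in particular the commutation $\Pi^k_F(\partial_t\phi)=\partial_t(J^{k+1}_F\phi)$, I replace $v_K-v_{\dK}$ by $J^{k+1}_{\dK}(v_K-v_{\dK})$ and $\partial_n v_K-\gamma_{\dK}$ by $\Pi^k_{\dK}(\partial_n v_K-\gamma_{\dK})$. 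Combining discrete trace/inverse bounds on the polynomial $\eta$ with Young's inequality gives $\|\nabla^2\eta\|_K^2\lesssim S_{\dK}(\hat{v}_K,\hat{v}_K)$, and the triangle inequality $\|\nabla^2 v_K\|_K\le\|\nabla^2R_K(\hat{v}_K)\|_K+\|\nabla^2\eta\|_K$ delivers the required bound.

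The two remaining seminorm contributions are handled by decomposition. Writing $v_{\dK}-v_K=J^{k+1}_{\dK}(v_{\dK}-v_K)+(J^{k+1}_{\dK}(v_K)-v_K)$, the first summand sits directly in $S_{\dK}$ and the second is controlled via \eqref{eq:app_canon_dK} as above. Likewise, $\gamma_{\dK}-\partial_n v_K=\Pi^k_{\dK}(\gamma_{\dK}-\partial_n v_K)+(\partial_n v_K-\Pi^k_{\dK}(\partial_n v_K))$: the projection error of $\partial_n v_K$ on each face is bounded by $Ch_K\|\partial_{nt}v_K\|_{\dK}$ via a one-dimensional Bramble--Hilbert estimate, and then by $h_K^{1/2}\|\nabla^2 v_K\|_K$ through a trace inverse on the polynomial $\nabla^2 v_K$. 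The delicate point in the whole argument will be the lower bound: the design of $S_{\dK}$ with $J^{k+1}_{\dK}$ and $\Pi^k_{\dK}$ is precisely tuned so that the orthogonality identities \eqref{eq:prop_canon_dK} match the polynomial degrees of the boundary factors produced by the Hessian-based reconstruction; without this matching, $S_{\dK}$ would not control $\|\nabla^2\eta\|_K$ and the equivalence would fail.
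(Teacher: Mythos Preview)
Your proposal is correct and follows essentially the same path as the paper, with only cosmetic variations: for the lower bound the paper tests \eqref{reconstruction} with $w=v_K$ directly (rather than $w=\eta=R_K(\hat{v}_K)-v_K$), and for the upper bound it first inserts the projections so as to land on $S_{\dK}^{1/2}$ before bounding by $|\hat{v}_K|_{\fesE}$, but the key ingredients---the orthogonality identities \eqref{eq:prop_canon_dK} matched to the boundary polynomial degrees of the Hessian reconstruction, and the approximation properties \eqref{eq:app_canon_dK}---are identical. Note one small typo: your decomposition should read $J^{k+1}_{\dK}(v_{\dK}-v_K)=(v_{\dK}-v_K)+(v_K-J^{k+1}_{\dK}(v_K))$, with a plus sign, since $J^{k+1}_{\dK}$ fixes $v_{\dK}\in\mathbb{P}^{k+1}(\FK)$.
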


\begin{proof}
(1) Lower bound. Using the reconstruction defined in \eqref{reconstruction} with $w := v_K\in \mathbb{P}^{k+2}(K)$, we have
\begin{equation}
\begin{aligned}
(\nabla^2 R_K(\hat{v}_K),\nabla^2 v_K)_K= {} & \|\nabla^2  {v}_K\|^2_K
+ (v_K -v_{\dK} , \partial_n \Delta  v_K)_{\dK} \\
& - (\partial_n v_K - \gamma_{\dK},  \partial_{nn}  v_K)_{\dK}
- (\partial_t (v_K - v_{\dK}),  \partial_{nt}  v_K)_{\dK} \\
= {} & \|\nabla^2  {v}_K\|^2_K
+ (J_{\dK}^{k+1}(v_K -v_{\dK}), \partial_n \Delta  v_K)_{\dK} \\
& - (\Pi_{\dK}^k(\partial_n v_K - \gamma_{\dK}),  \partial_{nn}  v_K)_{\dK}
- (\partial_t J_{\dK}^{k+1}(v_K - v_{\dK}),  \partial_{nt}  v_K)_{\dK},
\end{aligned}
\end{equation}
where we used the two identities from \eqref{eq:prop_canon_dK} together with $\partial_n \Delta  v_K\in \mathbb{P}^{k-1}(\FK)$ (if $k\ge1$, otherwise this term vanishes) and $\partial_{nn} v_K,\partial_{nt} v_K \in \mathbb{P}^{k}(\FK)$.
Using the Cauchy--Schwarz inequality together with the inverse inequalities \eqref{H1_inv},  \eqref{trace_inv}, and \eqref{inv_tang}, we infer that
\begin{equation*}
\begin{aligned}
\|\nabla^2  {v}_K\|^2_K\leq {}&
\|\nabla^2  R_K(\hat{v}_K)\|_K  \|\nabla^2  {v}_K\|_K
+ \| J_{\dK}^{k+1}(v_K -v_{\dK})\|_{\dK} \| \partial_n \Delta  v_K\|_{\dK} \\
& + \| \Pi_{\dK}^k(\partial_n v_K - \gamma_{\dK}) \|_{\dK} \| \partial_{nn}  v_K \|_{\dK}
+ \| \partial_t J_{\dK}^{k+1}(v_K - v_{\dK}) \|_{\dK} \| \partial_{nt}  v_K \|_{\dK} \\
\leq {}& \|\nabla^2  R_K(\hat{v}_K)\|_K \|\nabla^2  {v}_K\|_K
+ C S_{\dK}(\hat{v}_K,\hat{v}_K)^{\frac12} \|\nabla^2  {v}_K\|_K,
\end{aligned}
\end{equation*}
which shows that
\begin{equation} \label{eq:bnd_up1}
\|\nabla^2  {v}_K\|_K \le \|\nabla^2  R_K(\hat{v}_K)\|_K + C S_{\dK}(\hat{v}_K,\hat{v}_K)^{\frac12}.
\end{equation}
Moreover, since $v_{\dK} - v_K=J_{\dK}^{k+1}(v_{\dK} - v_K)-(v_K-J_{\dK}^{k+1}(v_K))$, the triangle inequality implies that
\begin{align*}
h_K^{-\frac32} \| v_{\dK} - v_K\|_{\dK} \le {}& h_K^{-\frac32} \|J_{\dK}^{k+1}(v_{\dK} - v_K)\|_{\dK}
+ h_K^{-\frac32}\|v_K-J_{\dK}^{k+1}(v_K)\|_{\dK} \\
\le {}& S_{\dK}(\hat{v}_K,\hat{v}_K)^{\frac12} + Ch_K^{\frac12}\|\partial_{tt}v_K\|_{\dK} \\
\le {}& S_{\dK}(\hat{v}_K,\hat{v}_K)^{\frac12} + C\|\nabla^2 v_K\|_K,
\end{align*}
where we used the approximation property \eqref{eq:app_canon_dK} and the discrete trace inequality~\eqref{trace_inv}. Combining this bound with \eqref{eq:bnd_up1} proves that
\begin{equation} \label{eq:bnd_up2}
h_K^{-\frac32} \| v_{\dK} - v_K\|_{\dK} \le C\big( \|\nabla^2  R_K(\hat{v}_K)\|_K + S_{\dK}(\hat{v}_K,\hat{v}_K)^{\frac12}\big).
\end{equation}
Proceeding similarly shows that
\begin{align*}
h_K^{-\frac12} \| \gamma_{\dK} - \partial_n v_K\|_{\dK} \le {}& h_K^{-\frac12} \|\Pi_{\dK}^{k}(\gamma_{\dK} - \partial_n v_K)\|_{\dK}
+ h_K^{-\frac12}\|\partial_nv_K-\Pi_{\dK}^{k}(\partial_n v_K)\|_{\dK} \\
\le {}& S_{\dK}(\hat{v}_K,\hat{v}_K)^{\frac12} + C h_K^{\frac12} \|\partial_{nt}v_K\|_{\dK} \\
\le {}& S_{\dK}(\hat{v}_K,\hat{v}_K)^{\frac12} + C\|\nabla^2 v_K\|_K,
\end{align*}
where we used the approximation properties of $\Pi_{\dK}^k$ and the discrete trace inequality~\eqref{trace_inv}. Combining this bound with \eqref{eq:bnd_up1} proves that
\begin{equation} \label{eq:bnd_up3}
h_K^{-\frac12} \| \gamma_{\dK} - \partial_n v_K\|_{\dK} \le C\big( \|\nabla^2  R_K(\hat{v}_K)\|_K + S_{\dK}(\hat{v}_K,\hat{v}_K)^{\frac12}\big).
\end{equation}
Finally, combining the bounds  \eqref{eq:bnd_up1},  \eqref{eq:bnd_up2}, and  \eqref{eq:bnd_up3} proves the lower bound in~\eqref{local equivalent}.
\\
(2) Upper bound. Using this time $w := R_K(\hat{v}_K)\in\mathbb{P}^{k+2}(K)$ in the reconstruction defined in \eqref{reconstruction} and proceeding as above shows that
\begin{equation} \label{eq:bnd_up4}
\|\nabla^2  R_K(\hat{v}_K)\|_K \le \|\nabla^2  {v}_K\|_K
+ C S_{\dK}(\hat{v}_K,\hat{v}_K)^{\frac12}.
\end{equation}
Moreover, still proceeding as above, we infer that
\begin{align*}
h_K^{-\frac32} \|J_{\dK}^{k+1}(v_{\dK} - v_K)\|_{\dK} \le {}&
h_K^{-\frac32} \| v_{\dK} - v_K\|_{\dK} + h_K^{-\frac32}\|v_K-J_{\dK}^{k+1}(v_K)\|_{\dK} \\
\le {}& h_K^{-\frac32} \| v_{\dK} - v_K\|_{\dK} + C\|\nabla^2 v_K\|_K,
\end{align*}
and
\begin{equation*}
h_K^{-\frac12} \|\Pi_{\dK}^{k}(\gamma_{\dK} - \partial_n v_K)\|_{\dK} \le
h_K^{-\frac12} \| \gamma_{\dK} - \partial_n v_K\|_{\dK} + C\|\nabla^2 v_K\|_K.
\end{equation*}
Putting the above two bounds together shows that
\begin{equation} \label{eq:bnd_up5}
S_{\dK}(\hat{v}_K,\hat{v}_K)^{\frac12} \le C |\hat{v}_K|_{\fesE}.
\end{equation}
Finally, the combination of~\eqref{eq:bnd_up4} and~\eqref{eq:bnd_up5} proves the upper bound in~\eqref{local equivalent}.
\end{proof}

We equip the space $\fesz$ with the norm
\begin{equation}\label{H1_seminorm}
\|\hat{v}_h\|_{\fesz}:= \su |\hat{v}_K|^2_{\fesE},\qquad \forall \hat{v}_h \in \fesz.
\end{equation}
To show that this indeed defines a norm, consider $\hat{v}_h \in \fesz$ such that
$\|\hat{v}_h\|_{\fesz}=0$. Then, for all $K\in\mesh$, $v_K\in\mathbb{P}^1(K)$ and
$v_{\dK}=v_K$ and $\gamma_{\dK}=\partial_n v_K$ on $\dK$. For any cell $K\in\mesh$
having at least one boundary face, say $F\in\FK\cap \Fb$, we have $v_F=\gamma_F=0$
by definition of $\fesz$. Since $v_K$ is affine and its gradient vanishes
identically on $F$, $\nabla v_K$ vanishes in $K$, and since $v_K$ vanishes on $F$,
we infer that $v_K$ vanishes identically in $K$. This
implies that $v_F=\gamma_F=0$ for all
$F\in\FK$. We can then propagate the reasoning one layer of cells further inside the domain, and by repeating the process, we reach all the cells composing the mesh. Thus, the three components of the triple $\hat{v}_h$ vanish identically everywhere.

\begin{corollary}[Coercivity and well-posedness]\label{lemma: coercivit}
The discrete bilinear form $a_h$ is coercive on $\fesz$, and the discrete problem \eqref{discrete problem} is well-posed.
\end{corollary}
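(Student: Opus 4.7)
My plan is to derive coercivity directly from the local stability bound in Lemma~\ref{lem:stab_bnd} and to conclude well-posedness from the finite-dimensional setting.

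First, I would observe that by the very definition \eqref{eq:def_aK} of $a_K$, we have
\begin{equation*}
a_K(\hat{v}_K,\hat{v}_K) = \|\nabla^2 R_K(\hat{v}_K)\|_K^2 + S_{\dK}(\hat{v}_K,\hat{v}_K),
\end{equation*}
for all $\hat{v}_K\in\fesE$ and all $K\in\mesh$. The lower bound in \eqref{local equivalent} then gives $a_K(\hat{v}_K,\hat{v}_K)\ge \alpha |\hat{v}_K|^2_{\fesE}$. Summing this inequality cellwise over $K\in\mesh$ and invoking the definition \eqref{H1_seminorm} of $\|\cdot\|_{\fesz}$, I obtain
\begin{equation*}
a_h(\hat{v}_h,\hat{v}_h) = \su a_K(\hat{v}_K,\hat{v}_K) \ge \alpha \|\hat{v}_h\|_{\fesz}^2,\qquad \forall \hat{v}_h\in\fesz,
\end{equation*}
which is the desired coercivity, the map $\|\cdot\|_{\fesz}$ being indeed a norm on $\fesz$ (as verified just above the statement of the corollary, using the homogeneous boundary conditions to propagate the vanishing of the cell polynomials from the boundary layer of cells to the whole mesh).

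Well-posedness then follows immediately. Since $\fesz$ is finite-dimensional and $a_h$ is a bilinear form on $\fesz\times\fesz$ that is coercive with respect to the norm $\|\cdot\|_{\fesz}$, the linear operator associated with $a_h$ is injective, and hence bijective, on $\fesz$. Thus, the discrete problem \eqref{discrete problem} admits a unique solution for every right-hand side $f\in L^2(\Omega)$. No appeal to the upper bound in \eqref{local equivalent} is needed at this point, although it does yield the natural boundedness estimate $a_h(\hat{v}_h,\hat{w}_h)\le \alpha^{-1}\|\hat{v}_h\|_{\fesz}\|\hat{w}_h\|_{\fesz}$ via cellwise application of Cauchy--Schwarz, a fact that will be useful later in the error analysis.

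There is essentially no obstacle here: the statement is a direct corollary of Lemma~\ref{lem:stab_bnd}, and the main preparatory work, namely verifying that $\|\cdot\|_{\fesz}$ separates points on $\fesz$, has already been carried out in the text preceding the corollary. The only point requiring a modicum of care is the connectivity-based propagation argument establishing that $\|\cdot\|_{\fesz}$ is a norm, which itself relies on the mesh being connected so that every cell can be reached from a boundary cell through a chain of neighboring cells.
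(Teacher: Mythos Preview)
Your proof is correct and follows essentially the same approach as the paper: sum the local lower bound from Lemma~\ref{lem:stab_bnd} cellwise to obtain coercivity, then conclude well-posedness. The only cosmetic difference is that the paper invokes the Lax--Milgram lemma, whereas you use the finite-dimensional injectivity-implies-bijectivity argument; both are valid and amount to the same thing here.
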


\begin{proof}
Summing the lower bound in~\eqref{local equivalent} over all the mesh cells shows the following coercivity property:
\begin{equation}\label{coercivity}
a_h(\hat{v}_h,\hat{v}_h)  \geq \alpha\|\hat{v}_h\|_{\fesz}^2, \qquad \forall \hat{v}_h\in\fesz.
\end{equation}
The well-posedness of \eqref{discrete problem} then follows from the Lax--Milgram lemma.
\end{proof}

\subsection{Local reduction operator and polynomial approximation}

For all $K\in \mesh$, we define the local reduction operator
$\mathcal{\hat{I}}^k_K : H^2(K) \rightarrow \fesE$ such that for all $v\in H^2(K)$,
\begin{equation}\label{interpoltation}
\mathcal{\hat{I}}^k_K(v):= \big(\Pi_{K}^{k+2}(v),J_{\dK}^{k+1} (v), \Pi_{\dK}^k (\n_K{\cdot}\nabla v)\big) \in \fesE.
\end{equation}
Moreover, the $H^2$-elliptic projection $\mathcal{E}_K(v):H^2(K)\rightarrow \mathbb{P}^{k+2}(K)$ is defined such that
\begin{equation}\label{H2_ell_proj}
\begin{alignedat}{2}
(\nabla^2 (\mathcal{E}_K(v) - v), \nabla^2 w)_K &= 0,&\qquad&\forall w \in  \mathbb{P}^{k+2}(K), \\
(\mathcal{E}_K(v)- v, \xi)_K &=0,&\qquad&\forall \xi\in \mathbb{P}^{1}(K).
\end{alignedat}
\end{equation}
The following lemma states the two key properties of the local reduction operator defined in~\eqref{interpoltation}.

\begin{lemma}[Local reduction operator] \label{lem:local_red}
We have $R_K\circ \mathcal{\hat{I}}^k_K = \mathcal{E}_K$ for all $K\in\mesh$. Moreover,
for all $K\in\mesh$ and all $v\in H^2(K)$, we have
\begin{equation}\label{stabilization bound}
S_{\dK}(\mathcal{\hat{I}}^k_K(v),\mathcal{\hat{I}}^k_K(v))^{\frac12}
\leq
C \| \nabla^2(v - \Pi_K^{k+2}(v))\|_{K}.
\end{equation}
\end{lemma}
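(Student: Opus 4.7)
The plan is to establish the two claims in sequence, first using the characterization of $R_K$ via \eqref{eq:rec_ipp} to identify $R_K \circ \mathcal{\hat{I}}^k_K$ with $\mathcal{E}_K$, and then exploiting projection-type properties of $J_{\dK}^{k+1}$ and $\Pi_{\dK}^k$ together with \eqref{eq:approx_faces} to obtain the stabilization bound.

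For the identity $R_K\circ\mathcal{\hat{I}}^k_K = \mathcal{E}_K$, I would check that $R_K(\mathcal{\hat{I}}^k_K(v))$ satisfies the two equations defining $\mathcal{E}_K(v)$ in \eqref{H2_ell_proj}. The $\mathbb{P}^1(K)$-orthogonality condition is immediate from the second equation in \eqref{reconstruction} and from $\mathbb{P}^1(K)\subset\mathbb{P}^{k+2}(K)$. For the Hessian equation, I would apply \eqref{eq:rec_ipp} with $\hat{v}_K=\mathcal{\hat{I}}^k_K(v)$, and then invoke the compatibility of each projector with the polynomial degree of the corresponding test function on $\dK$: since $\Delta^2 w\in\mathbb{P}^{k-2}(K)$, the term $(\Pi_K^{k+2}(v),\Delta^2 w)_K$ equals $(v,\Delta^2 w)_K$; since $\partial_n\Delta w|_F\in\mathbb{P}^{k-1}(F)$, the identity $\Pi_{\dK}^{k-1}\circ J_{\dK}^{k+1}=\Pi_{\dK}^{k-1}$ in \eqref{eq:prop_canon_dK} yields $(J_{\dK}^{k+1}(v),\partial_n\Delta w)_{\dK}=(v,\partial_n\Delta w)_{\dK}$; since $\partial_{nn}w,\partial_{nt}w\in\mathbb{P}^k(\FK)$, the $L^2$-orthogonality of $\Pi_{\dK}^k$ and the identity $\partial_t J_{\dK}^{k+1}(v)=\Pi_{\dK}^k(\partial_t v)$ handle the last two boundary terms. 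Running \eqref{eq:ipp2} backwards then produces $(\nabla^2 v,\nabla^2 w)_K$, which by definition equals $(\nabla^2\mathcal{E}_K(v),\nabla^2 w)_K$.

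For the stabilization bound, I would evaluate each contribution in \eqref{def: stabilization} separately at $\hat{v}_K=\mathcal{\hat{I}}^k_K(v)$. For the first term, I would use that $J_{\dK}^{k+1}$ is a projection onto $\mathbb{P}^{k+1}(\FK)$ (by a dimension count against the two endpoint values and moments against $\mathbb{P}^{k-1}$) to rewrite $J_{\dK}^{k+1}(v_{\dK}-v_K) = J_{\dK}^{k+1}(v-\Pi_K^{k+2}(v))$, and then apply the triangle inequality
\[
\|J_{\dK}^{k+1}(\eta)\|_{\dK} \le \|\eta\|_{\dK} + \|\eta - J_{\dK}^{k+1}(\eta)\|_{\dK}
\]
with $\eta := v-\Pi_K^{k+2}(v)$. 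The first piece is bounded by scaling and \eqref{eq:approx_faces}; the second by the approximation estimate \eqref{eq:app_canon_dK} and the inequality $\|\partial_t\eta\|_{\dK}\le\|\nabla\eta\|_{\dK}$ followed again by \eqref{eq:approx_faces}. For the second term, I would use the projection property of $\Pi_{\dK}^k$ to reduce $\Pi_{\dK}^k(\gamma_{\dK}-\partial_n v_K)$ to $\Pi_{\dK}^k(\partial_n(v-\Pi_K^{k+2}(v)))$, then use $L^2$-stability of $\Pi_{\dK}^k$ and \eqref{eq:approx_faces} directly.

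The main technical point is the first identity: one must keep track of which degrees of freedom in $\mathcal{\hat{I}}^k_K(v)$ are chosen precisely so that they are orthogonal to the polynomial space spanned by the corresponding trace term in \eqref{eq:rec_ipp}. The choice of $J_{\dK}^{k+1}$ rather than $\Pi_{\dK}^{k+1}$ is crucial here because the tangential derivative on the boundary couples to $\partial_{nt}w\in\mathbb{P}^k(\FK)$, which is handled by the first identity in \eqref{eq:prop_canon_dK}, while the matching between $v_{\dK}$ and $\partial_n\Delta w$ (degree $k-1$) is handled by the second identity; a plain $L^2$-projection would not simultaneously satisfy both.
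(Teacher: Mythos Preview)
Your proposal is correct and follows essentially the same approach as the paper's proof: both parts hinge on the same ingredients (the identity~\eqref{eq:rec_ipp} together with the polynomial-degree matching via~\eqref{eq:prop_canon_dK} for part~(1), and the idempotence of $J_{\dK}^{k+1}$ and $\Pi_{\dK}^k$ combined with the triangle inequality, \eqref{eq:app_canon_dK}, and~\eqref{eq:approx_faces} for part~(2)). Your closing remark on why $J_{\dK}^{k+1}$ (rather than $\Pi_{\dK}^{k+1}$) is needed is a helpful addition not made explicit in the paper.
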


\begin{proof}
Let $K\in\mesh$ and let $v\in H^2(K)$. \\
(1) Using the definition \eqref{eq:rec_ipp} of the reconstruction operator, we infer that
for all $w \in  \mathbb{P}^{k+2}(K)$, we have
\begin{equation*}
\begin{aligned}
(\nabla^2 R_K(\mathcal{\hat{I}}^k_K (v)), \nabla^2 w)_K  = {}
& (\Pi_{K}^{k+2}(v) , \Delta^2 w)_K \\
& - ( J_{\dK}^{k+1} (v) , \partial_n \Delta  w)_{\dK}
+ ( \Pi_{\dK}^k (\partial_n v),  \partial_{nn}  w)_{\dK}
+  (\partial_t( J_{\dK}^{k+1} (v)),  \partial_{nt}  w)_{\dK}.
\end{aligned}
\end{equation*}
Since $\Delta^2 w \in  \mathbb{P}^{k-2}(K)$ for $k\ge2$ (and vanishes otherwise), $\partial_n \Delta w \in  \mathbb{P}^{k-1}(K)$ for $k\ge1$ (and vanishes otherwise), $\partial_{nn} w \in  \mathbb{P}^{k}(K)$, and $\partial_{nt} w \in  \mathbb{P}^{k}(K)$, the $L^2$-orthogonality properties of $\Pi_{K}^{k+2}$ and $\Pi_{\dK}^k$, together with the identities \eqref{eq:prop_canon_dK} satisfied by $J_{\dK}^{k+1}$ imply that
\begin{equation*}
\begin{aligned}
&(\nabla^2 R_K(\mathcal{\hat{I}}^k_K (v)), \nabla^2 w)_K  =
( v , \Delta^2 w)_K
- (  v , \partial_n \Delta  w)_{\dK}
+ ( \partial_{n} v,  \partial_{nn}  w)_{\dK}
+  (\partial_tv,  \partial_{nt}  w)_{\dK} = (\nabla^2v,\nabla^2w)_K.
\end{aligned}
\end{equation*}
Moreover, for all $\xi \in \mathbb{P}^{1}(K)$, we have $(R_K(\mathcal{\hat{I}}^k_K (v)),  \xi)_K = (\Pi_K^{k+2} (v),  \xi)_K = (v,  \xi)_K $ for all $\xi\in\mathbb{P}_1(K)$. The above two identities prove that $R_K(\mathcal{\hat{I}}^k_K (v))=\mathcal{E}_K(v)$ for all $v\in H^2(K)$. Thus, $R_K\circ \mathcal{\hat{I}}^k_K = \mathcal{E}_K$.
\\
(2) Let us now prove \eqref{stabilization bound}.
Recalling the definitions \eqref{def: stabilization} and \eqref{interpoltation}, we have
\begin{equation} \label{relation for stabilization}
S_{\dK}(\mathcal{\hat{I}}^k_K(v),\mathcal{\hat{I}}^k_K(v))^{\frac12}
\le
h_K^{-\frac32} \| J^{k+1}_{\dK}(v- \Pi^{k+2}_K (v) ) \|_{\dK}
+ h_K^{-\frac12}
\| \Pi^{k}_{\dK}(\partial_{n}v- \partial_{n}\Pi^{k+2}_K (v)) \|_{\dK},
\end{equation}
where we used that $J^{k+1}_{\dK}\circ J^{k+1}_{\dK}=J^{k+1}_{\dK}$ and $\Pi^{k}_{\dK}
\circ \Pi^{k}_{\dK}=\Pi^{k}_{\dK}$.
We start with the first term in \eqref{relation for stabilization} where we set $\phi:=
v- \Pi^{k+2}_K (v)$. Notice that $\phi\in H^2(K)^\perp$ and that $\phi|_{\dK}\in H^1(\FK)$.
Invoking the triangle inequality, the approximation property \eqref{eq:app_canon_dK},
and the trace inequality \eqref{eq:approx_faces} shows that
\begin{align*}
h_K^{-\frac32} \| J^{k+1}_{\dK}(\phi)\|_{\dK} &\le h_K^{-\frac32} \| \phi\|_{\dK}
+ h_K^{-\frac32} \| \phi - J^{k+1}_{\dK}(\phi)\|_{\dK} \\
&\le h_K^{-\frac32} \| \phi\|_{\dK}
+ Ch_K^{-\frac12} \|\partial_t\phi\|_{\dK} \le C \|\nabla^2\phi\|_K
= C\|\nabla^2(v- \Pi^{k+2}_K (v))\|_K.
\end{align*}
Moreover, for the second term in \eqref{relation for stabilization}, we invoke the $L^2(\dK)$-stability of $\Pi_{\dK}^k$ and the trace inequality \eqref{eq:approx_faces} to show that
\begin{align*}
h_K^{-\frac12}
\| \Pi^{k}_{\dK}(\partial_{n}v- \partial_{n} (\Pi^{k+2}_K (v)) \|_{\dK}
&\le h_K^{-\frac12}
\| \partial_{n}(v- \Pi^{k+2}_K (v)) \|_{\dK} \le C \|\nabla^2(v- \Pi^{k+2}_K (v))\|_K.
\end{align*}
Combining the above two bounds with \eqref{relation for stabilization} proves \eqref{stabilization bound}.
\end{proof}

To bound the consistency error in the next section, we will consider a norm that is stronger than the $H^2$-norm. For all $K\in \mesh$ and all $v\in H^{2+s}(K)$, $s>\frac{3}{2}$, we consider the following norm:
\begin{equation}\label{Def: specical norm}
\begin{aligned}
\|v\|^2_{\sharp, K}:= \|\nabla^2 v\|_K^2
+
h^{3}_K \|\partial_{n} \Delta v \|_{\dK}^2
+
h_K \|\partial_{nn}  v \|_{\dK}^2
+
h_K \|\partial_{nt}  v \|_{\dK}^2.
\end{aligned}
\end{equation}

\begin{lemma}[Approximation]\label{lem: approxmation}
The following holds true for all $K\in \mesh$ and all $v\in H^{2+s}(K)$, $s>\frac{3}{2}$:
\begin{equation}\label{elliptic projection}
\|v - \mathcal{E}_K (v)\|_{\sharp, K} \leq C \|v - \Pi_K^{k+2}(v)\|_{\sharp, K}.
\end{equation}
\end{lemma}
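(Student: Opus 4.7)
The plan is to reduce the comparison between the $H^2$-elliptic projection and the $L^2$-orthogonal projection to a statement about a single polynomial difference, and then to handle the various terms in the $\|\cdot\|_{\sharp,K}$ norm separately. I would introduce the shorthand $\eta := v - \mathcal{E}_K(v)$, $\zeta := v - \Pi_K^{k+2}(v)$, and
$$p := \mathcal{E}_K(v) - \Pi_K^{k+2}(v) = \zeta - \eta \in \mathbb{P}^{k+2}(K),$$
so that $\eta = \zeta - p$. The whole proof then amounts to bounding $\|\eta\|_{\sharp,K}$ by $\|\zeta\|_{\sharp,K}$ using the triangle inequality and the fact that $p$ is a polynomial of controlled degree.

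First I would bound the cell part $\|\nabla^2\eta\|_K$. Since $p \in \mathbb{P}^{k+2}(K)$, the orthogonality property in~\eqref{H2_ell_proj} gives $(\nabla^2\eta,\nabla^2 p)_K = 0$, so
$$\|\nabla^2\eta\|_K^2 = (\nabla^2\eta,\nabla^2\eta)_K = (\nabla^2\eta,\nabla^2\zeta - \nabla^2 p)_K = (\nabla^2\eta,\nabla^2\zeta)_K,$$
and Cauchy--Schwarz yields $\|\nabla^2\eta\|_K \le \|\nabla^2\zeta\|_K$. The triangle inequality then gives $\|\nabla^2 p\|_K \le 2\|\nabla^2\zeta\|_K \le 2\|\zeta\|_{\sharp,K}$, which will be the workhorse bound for the boundary contributions.

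Next I would treat the three boundary terms of $\|\eta\|_{\sharp,K}$. For each of them, I write $\eta = \zeta - p$ and apply the triangle inequality; the $\zeta$-parts are exactly what appears in $\|\zeta\|_{\sharp,K}$, so only the polynomial contribution remains. For this I use the discrete trace inequality~\eqref{trace_inv} and the inverse inequality~\eqref{H1_inv} applied to the polynomials $\partial_n\Delta p$, $\partial_{nn}p$, and $\partial_{nt}p$ (and their extensions from faces into $K$, using the shape-regularity and the uniform boundedness of $\#\FK$). Concretely,
$$h_K^{3/2}\|\partial_n\Delta p\|_{\dK} \le C\,h_K^{3/2}\cdot h_K^{-1/2}\|\nabla\Delta p\|_K \le C\,h_K\cdot h_K^{-1}\|\Delta p\|_K \le C\|\nabla^2 p\|_K,$$
and analogously
$$h_K^{1/2}\|\partial_{nn}p\|_{\dK} + h_K^{1/2}\|\partial_{nt}p\|_{\dK} \le C\|\nabla^2 p\|_K.$$
Combining these with the bound $\|\nabla^2 p\|_K \le C\|\zeta\|_{\sharp,K}$ from the previous step yields the claim.

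The proof is essentially routine once the splitting $\eta = \zeta - p$ is in hand; the only mild point to keep in mind is that $v \in H^{2+s}(K)$ with $s>3/2$ is what allows the trace $\partial_n\Delta v$ in $L^2(\dK)$ to be well defined (via the fractional multiplicative trace inequality), so that all the boundary terms in $\|\zeta\|_{\sharp,K}$ appearing above are finite. No further subtlety seems to arise, so I do not anticipate a genuine obstacle.
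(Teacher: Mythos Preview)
Your proof is correct and follows the same overall strategy as the paper: split via the triangle inequality using the polynomial $p=\mathcal{E}_K(v)-\Pi_K^{k+2}(v)$, reduce all boundary contributions of $p$ to $\|\nabla^2 p\|_K$ by discrete trace and inverse inequalities, and then bound $\|\nabla^2 p\|_K$ by $\|\nabla^2\zeta\|_K$.

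The one genuine difference lies in how $\|\nabla^2 p\|_K$ is controlled. You exploit directly that $\mathcal{E}_K$ is the $H^2$-orthogonal projection, which gives the best-approximation bound $\|\nabla^2\eta\|_K\le\|\nabla^2\zeta\|_K$ and hence $\|\nabla^2 p\|_K\le 2\|\nabla^2\zeta\|_K$ in two lines. The paper instead invokes the identity $\mathcal{E}_K=R_K\circ\mathcal{\hat I}_K^k$, expands $(\nabla^2\mathcal{E}_K(v),\nabla^2 w)_K$ via the reconstruction formula~\eqref{reconstruction}, tests with $w=p$, and bounds the resulting boundary terms using~\eqref{eq:approx_faces}. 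Your route is shorter and more elementary. The paper's route, while more circuitous here, has the advantage of not relying on the orthogonality of $\mathcal{E}_K$: this is precisely what allows the same argument to be recycled in Section~\ref{sec:HHO 3D} for the operator $\tilde{\mathcal E}_K=R_K\circ\mathcal{\hat I}_K^k$, which is \emph{not} the elliptic projection (see~\eqref{new elliptic projection}). Your orthogonality argument would not apply there.
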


\begin{proof}
Using the triangle inequality, we have
\begin{equation*}
\|v - \mathcal{E}_K (v)\|_{\sharp, K} \leq \|v - \Pi_K^{k+2}(v)\|_{\sharp, K} + \|\mathcal{E}_K(v) - \Pi_K^{k+2}(v)\|_{\sharp, K},
\end{equation*}
so that we only need to bound the second term on the right-hand side. Owing to the discrete inverse and trace inequalities \eqref{trace_inv} and \eqref{H1_inv}, we readily infer that
\begin{equation*}
\|\mathcal{E}_K(v) - \Pi_K^{k+2}(v)\|_{\sharp, K} \le C\|\nabla^2(\mathcal{E}_K(v) - \Pi_K^{k+2}(v))\|_K,
\end{equation*}
so that it remains to bound $\|\nabla^2(\mathcal{E}_K(v) - \Pi_K^{k+2}(v))\|_K$.
Recalling that $\mathcal{E}_K = R_K\circ \mathcal{\hat{I}}^k_K$, using the definition \eqref{reconstruction} of the reconstruction operator, and reasoning as in the proof of Lemma~\ref{lem:local_red} to remove the various projection operators, we infer that
\begin{equation*}
\begin{aligned}
(\nabla^2 \mathcal{E}_K(v), \nabla^2 w)_K
= {}& (\nabla^2 \Pi_{K}^{k+2}(v) , \nabla^2 w)_K
+ (\Pi_{K}^{k+2}(v)
- v , \partial_n \Delta  w)_{\dK} \\
&- (\partial_n (\Pi_{K}^{k+2}(v) - v),  \partial_{nn} w)_{\dK}
- (\partial_t (\Pi_{K}^{k+2}(v) - v),  \partial_{nt} w)_{\dK},
\end{aligned}
\end{equation*}
for all $w \in \mathbb{P}^{k+2}(K)$. Taking $w:=\mathcal{E}_K(v)-\Pi_{K}^{k+2}(v)$,
and invoking the Cauchy--Schwarz inequality together with the discrete trace and inverse inequalities \eqref{trace_inv} and \eqref{H1_inv}, we infer that
\begin{align*}
\|\nabla^2(\mathcal{E}_K(v) - \Pi_K^{k+2}(v))\|_K
&\le C\big( h^{-\frac32}_K \|v-\Pi_{K}^{k+2}(v) \|_{\dK}
+ h^{-\frac12}_K	\|\nabla(v-\Pi_{K}^{k+2}(v)) \|_{\dK}\big) \\
&\le C\|\nabla^2(v-\Pi_K^{k+2}(v))\|_K,
\end{align*}
where the last bound follows from the trace inequality~\eqref{eq:approx_faces}. This completes the proof.
\end{proof}

\subsection{Bound on consistency error}

The global reduction operator $\Ihk:H^2(\Omega)\rightarrow \fes$ is defined
such that for all $v\in H^2(\Omega)$,
\begin{equation} \label{def:Ihk}
\Ihk(v):=\big( (\Pi_K^{k+2}(v))_{K\in\mesh},(J^{k+1}_F(v))_{F\in\Fall},(\Pi_F^{k}(\n_F{\cdot}\nabla v))_{F\in\Fall} \big)\in \fes,
\end{equation}
recalling that $v$ and $\nabla v$ are single-valued on every $F\in\Fint$ for
all $v\in H^2(\Omega)$. Importantly, we notice that for all $K\in\mesh$, the local components of $\Ihk(v)$ attached to $K$ and the faces composing its boundary are
$\mathcal{\hat{I}}^k_K(v|_K)$. Moreover, for the exact solution $u$ of~\eqref{weak form}, we have $\Ihk(u)\in \fesz$.
We define the consistency error $\delta_h \in (\fesz)^\prime$ such that
\begin{equation} \label{eq:def_delta}
\langle \delta_h,\hat{w}_h \rangle:= \ell(w_{\mesh}) - a_h(\Ihk(u), \hat{w}_h),
\qquad \forall \hat{w}_h\in \fesz,
\end{equation}
where $\langle\cdot,\cdot\rangle$ denotes the duality pairing between $(\fesz)'$ and $\fesz$.

\begin{lemma}[Consistency]\label{lemma:consistency}
Assume that $u\in H^{2+s}(\Omega)$ with $s>\frac32$. The following holds true:
\begin{equation} \label{consistency}
  \|\delta_h\|_{(\fesz)'}:= \sup_{\hat{w}_h\in\fesz}
\frac{|\langle \delta_h,\hat{w}_h \rangle|}{\|\hat{w}_h\|_{\fesz}}
\leq C \left( \su \|u-\Pi_K^{k+2} (u)\|^2_{\sharp,K}\right)^{\frac12}.
\end{equation}
\end{lemma}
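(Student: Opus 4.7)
The plan is to expand $\langle\delta_h,\hat{w}_h\rangle$ cellwise, use the commutation $R_K\circ\mathcal{\hat{I}}^k_K=\mathcal{E}_K$ from Lemma~\ref{lem:local_red}, and manipulate the leading term with an integration by parts so that what is left is a sum of "gap" contributions that can be dominated by $\|u-\Pi_K^{k+2}(u)\|_{\sharp,K}\,|\hat{w}_K|_{\fesE}$. Concretely, fix $\hat{w}_h\in\fesz$ with local components $\hat{w}_K=(w_K,w_{\dK},\chi_{\dK})$. I would plug the admissible test $\phi:=\mathcal{E}_K(u)\in\mathbb{P}^{k+2}(K)$ into the defining identity \eqref{reconstruction} of $R_K(\hat{w}_K)$ and replace $(\nabla^2w_K,\nabla^2\mathcal{E}_K(u))_K$ by $(\nabla^2w_K,\nabla^2u)_K$ using the defining property of the elliptic projection $\mathcal{E}_K$. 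Applying the integration-by-parts formula \eqref{eq:ipp2} to $(\nabla^2u,\nabla^2w_K)_K$ introduces $(f,w_K)_K=(\Delta^2u,w_K)_K$ and boundary terms.

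Subtracting from $(f,w_K)_K$ and regrouping by adding and subtracting $u$ versus $\mathcal{E}_K(u)$, the cellwise residual splits into three \emph{consistency} terms in which $u-\mathcal{E}_K(u)$ is paired respectively with $w_K-w_{\dK}$, $\partial_nw_K-\chi_{\dK}$, and $\partial_t(w_K-w_{\dK})$, plus three \emph{trace} terms featuring $u$ paired respectively with $w_{\dK}$, $\chi_{\dK}$, and $\partial_tw_{\dK}$. I claim the trace terms vanish upon summation over $K$. Indeed, on any interior face shared by $K^+,K^-$ one has $\n_{K^+}=-\n_{K^-}$, which gives $\partial_{n^+}\Delta u+\partial_{n^-}\Delta u=0$, $(\n_F\cdot\n_{K^+})\partial_{n^+n^+}u+(\n_F\cdot\n_{K^-})\partial_{n^-n^-}u=0$, and (in 2D) $\partial_{n^+t}u+\partial_{n^-t}u=0$, so these cancel against the single-valued $w_F$, the antisymmetric $\chi_{\dK}|_F=(\n_F\cdot\n_K)\chi_F$, and the single-valued $\partial_tw_F$; on boundary faces, $\hat{w}_h\in\fesz$ enforces $w_F=\chi_F=0$ and hence $\partial_tw_F=0$. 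Combining with the subtracted stabilization leaves $\langle\delta_h,\hat{w}_h\rangle=\sum_{K\in\mesh}\bigl[T^1_K+T^2_K+T^3_K-S_{\dK}(\mathcal{\hat{I}}^k_K(u),\hat{w}_K)\bigr]$.

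The final step is to bound each term by Cauchy--Schwarz on $\dK$ and to reorganize the resulting $h_K$-weights so that the $u$-factor is controlled by $\|u-\mathcal{E}_K(u)\|_{\sharp,K}$ (via the definition \eqref{Def: specical norm}) and the $\hat{w}_K$-factor by $|\hat{w}_K|_{\fesE}$ (via \eqref{H2_seminorm_elem}). The terms $T^1_K$ and $T^2_K$ are immediate. For the stabilization, Cauchy--Schwarz on $S_{\dK}$, Lemma~\ref{lem:local_red}, and Lemma~\ref{lem:stab_bnd} give $|S_{\dK}(\mathcal{\hat{I}}^k_K(u),\hat{w}_K)|\le C\|\nabla^2(u-\Pi_K^{k+2}(u))\|_K\,|\hat{w}_K|_{\fesE}$. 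The delicate point I expect to be the main obstacle is $T^3_K=(\partial_{nt}(u-\mathcal{E}_K(u)),\partial_t(w_K-w_{\dK}))_{\dK}$, since $\|\partial_t(w_K-w_{\dK})\|_{\dK}$ does not appear directly in $|\hat{w}_K|_{\fesE}$; the remedy is that $w_K-w_{\dK}$ is polynomial facewise of degree at most $k+2$, so the discrete inverse inequality \eqref{inv_tang} yields $\|\partial_t(w_K-w_{\dK})\|_{\dK}\le Ch_K^{-1}\|w_K-w_{\dK}\|_{\dK}$, restoring the $h_K^{-3/2}$ scaling that matches $|\hat{w}_K|_{\fesE}$. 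Summing over $K$ by the discrete Cauchy--Schwarz inequality and finally invoking the approximation bound of Lemma~\ref{lem: approxmation} to replace $\|u-\mathcal{E}_K(u)\|_{\sharp,K}$ by $\|u-\Pi_K^{k+2}(u)\|_{\sharp,K}$ yields \eqref{consistency}.
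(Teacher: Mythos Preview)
Your proposal is correct and follows essentially the same route as the paper's proof. The only cosmetic differences are in the order of presentation: the paper first integrates $\ell(w_{\mesh})$ by parts and inserts the face quantities $w_{\dK},\chi_{\dK},\partial_t w_{\dK}$ directly using single-valuedness (arriving at the error representation \eqref{the error relation}, which includes the harmless term $(\nabla^2\eta,\nabla^2 w_K)_K=0$), whereas you start from the reconstruction identity for $R_K(\hat{w}_K)$ tested against $\mathcal{E}_K(u)$, eliminate the Hessian term via the elliptic projection property, and then show the residual trace terms cancel upon summation---but the resulting decomposition into $T^1_K,T^2_K,T^3_K$ plus stabilization, the use of the inverse inequality~\eqref{inv_tang} for $T^3_K$, and the final appeal to Lemma~\ref{lem: approxmation} are identical.
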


\begin{proof}
Let $\hat{w}_h\in \fesz$. Using the definition of $\ell$ in \eqref{bilinear form}, the PDE and the boundary conditions satisfied by the exact solution $u$, and integrating by parts cellwise, we infer that
\begin{equation*}
\ell(w_{\mesh}) = \su \Big\{ (\nabla^2 u, \nabla^2 {w}_K)_K
+ (\partial_n \Delta u , w_K)_{\dK}
- (\partial_{nn}  u , \partial_{n} w_K)_{\dK}
- (\partial_{nt}  u , \partial_{t} w_K)_{\dK} \Big\}.
\end{equation*}
The assumption $u \in H^{2+s} (\Omega)$ with $s>\frac32$ implies that $(\partial_n \Delta u)|_{\dK}$, $(\partial_{nn}u)|_{\dK}$, and $(\partial_{nt} u)|_{\dK}$ are meaningful in $L^2(\dK)$ and single-valued at every mesh interface. Moreover, since $w_{\dK}$, $\partial_t w_{\dK}$, and $ \chi_{\dK}$ are single-valued at every mesh interface $\Fint$ and vanish at each mesh boundary face since $\hat{w}_h\in \fesz$, we have
\begin{align*}
\ell(w_{\mesh})
={} &
\su \Big\{ (\nabla^2 u, \nabla^2 {w}_K)_K
+ (\partial_n \Delta u , w_K - w_{\dK})_{\dK} \\
& - (\partial_{nn}  u , \partial_{n} w_K  - \chi_{\dK})_{\dK}
- (\partial_{nt}  u , \partial_{t} (w_K - w_{\dK}))_{\dK} \Big\}.
\end{align*}
Since $a_h$ is assembled cellwise (see \eqref{bilinear form}) and the local
components of $\Ihk(u)$ are $\mathcal{\hat{I}}^k_K(u|_K)$ for all $K\in\mesh$,
we infer that
$a_h(\Ihk(u), \hat{w}_h) = \su a_K(\mathcal{\hat{I}}^k_K(u|_K),\hat{w}_K)$.
Using the definition \eqref{eq:def_aK} of $a_K$, the definition \eqref{reconstruction} of
$R_K(\hat{w}_K)$, and the identity $R_K\circ \mathcal{\hat{I}}^k_K = \mathcal{E}_K$ from Lemma~\ref{lem:local_red} leads to
\begin{align*}
a_h(\Ihk(u), \hat{w}_h) ={} &
\su \Big\{ (\nabla^2 \mathcal{E}_K(u), \nabla^2 {w}_K)_K
+ (\partial_n \Delta \mathcal{E}_K(u), w_K - w_{\dK})_{\dK} \\
& - (\partial_{nn} \mathcal{E}_K(u), \partial_{n} w_K  - \chi_{\dK})_{\dK}
- (\partial_{nt} \mathcal{E}_K(u), \partial_{t} (w_K - w_{\dK}))_{\dK} + S_{\dK}(\mathcal{\hat{I}}^k_K(u),\hat{w}_K) \Big\}.
\end{align*}
Defining the function $\eta$ cellwise as $\eta|_K:=u|_K-\mathcal{E}_K(u|_{K})$ for all $K\in\mesh$,
we infer that
\begin{equation}\label{the error relation}
\begin{aligned}
\langle \delta_h,\hat{w}_h \rangle
= {}& \su \Big\{ (\nabla^2 \eta, \nabla^2 {w}_K)_K
+ (\partial_n \Delta \eta, w_K - w_{\dK})_{\dK} \\
& - (\partial_{nn} \eta, \partial_{n} w_K  - \chi_{\dK})_{\dK}
- (\partial_{nt} \eta, \partial_{t} (w_K - w_{\dK}))_{\dK} - S_{\dK}(\mathcal{\hat{I}}^k_K(u),\hat{w}_K)\Big\}.
\end{aligned}
\end{equation}
(Notice that $(\nabla^2 \eta, \nabla^2 {w}_K)_K=0$, but we keep this term since it can be bounded as the other ones.)
Let us denote by $\mathcal{T}_{1,K}$ the first four addends on the right-hand side and by
$\mathcal{T}_{2,K}$ the fifth addend. We bound $\mathcal{T}_{1,K}$ by the Cauchy--Schwarz inequality
and also invoke the inverse inequality~\eqref{inv_tang}. Recalling the definition~\eqref{Def: specical norm} of the
$\|{\cdot}\|_{\sharp,K}$-norm, this yields
\begin{equation*}
|\mathcal{T}_{1,K}|\le C\| \eta \|_{\sharp,K}|\hat{w}_K|_{\fesE}.
\end{equation*}
Moreover, owing to~\eqref{stabilization bound} and the upper bound in~\eqref{local equivalent}, we have
\begin{equation*}
|\mathcal{T}_{2,K}| \le S_{\dK}(\mathcal{\hat{I}}^k_K(u),\mathcal{\hat{I}}^k_K(u))^{\frac12}
S_{\dK}(\hat{w}_K,\hat{w}_K)^{\frac12} \le C\| \nabla^2(u - \Pi_K^{k+2}(u))\|_{K}|\hat{w}_K|_{\fesE}.
\end{equation*}
Altogether, this implies that
\begin{equation*}
|\langle \delta_h,\hat{w}_h \rangle| \le C\left( \su \| \eta \|^2_{\sharp,K} + \| \nabla^2(u - \Pi_K^{k+2}(u))\|_{K}^2
\right)^{\frac12} \|\hat{w}_h\|_{\fesz}.
\end{equation*}
Invoking Lemma~\ref{lem: approxmation}, this completes the proof.
\end{proof}

\subsection{Error estimate}

We are now ready to establish the main result concerning the error analysis.

\begin{theorem}[$H^2$-error estimate]\label{Theorem: main}
Assume that $u\in H^{2+s}(\Omega)$ with $s>\frac{3}{2}$. The following holds true:
\begin{equation} \label{eq:err1}
\su \|\nabla^2 (u-R_K(\hat{u}_K))\|_K^2 \leq C 	\su \|u-\Pi_K^{k+2}(u)\|_{\sharp,K}^2.
\end{equation}
Consequently, if $k\ge1$, assuming $u|_K\in H^{k+3}(K)$ for all $K\in\mesh$, we have
\begin{equation} \label{eq:err2}
\su \|\nabla^2 (u-R_K(\hat{u}_K))\|_K^2 \leq C 	\su \big(h_K^{k+1}|u|_{H^{k+3}(K)}\big)^2,
\end{equation}
and if $k=0$, letting $\sigma:=\min(s-1,1)\in (\frac12,1]$, we have
\begin{equation} \label{eq:err3}
\su \|\nabla^2 (u-R_K(\hat{u}_K))\|_K^2 \leq C 	\su \big(h_K(|u|_{H^{3}(K)}+h_K^{\sigma}|u|_{H^{3+\sigma}(K)})\big)^2.
\end{equation}
\end{theorem}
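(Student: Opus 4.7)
The plan is to follow the standard HHO energy-error paradigm: use coercivity together with the consistency bound from Lemma~\ref{lemma:consistency}, then convert the energy-type estimate into the desired cellwise Hessian bound via a triangle inequality, and finally specialize the right-hand side through the approximation properties of $\Pi_K^{k+2}$.

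First I would set $\hat{e}_h:=\hat{u}_h-\Ihk(u)\in \fesz$, which is well-defined since $\hat{I}_h^k(u)$ has vanishing boundary trace and normal-derivative components under the homogeneous boundary conditions. The coercivity estimate \eqref{coercivity} combined with the discrete equation \eqref{discrete problem} and the definition \eqref{eq:def_delta} of $\delta_h$ yields
\[
\alpha \|\hat{e}_h\|_{\fesz}^2 \le a_h(\hat{e}_h,\hat{e}_h)=\ell(e_{\mesh})-a_h(\Ihk(u),\hat{e}_h)=\langle\delta_h,\hat{e}_h\rangle,
\]
so $\|\hat{e}_h\|_{\fesz}\le \alpha^{-1}\|\delta_h\|_{(\fesz)'}$. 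Lemma~\ref{lemma:consistency} then bounds the right-hand side by $C(\sum_K\|u-\Pi_K^{k+2}(u)\|_{\sharp,K}^2)^{\frac12}$.

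Next, for each $K\in\mesh$ I would decompose
\[
\nabla^2(u-R_K(\hat{u}_K))=\nabla^2(u-\mathcal{E}_K(u))+\nabla^2(\mathcal{E}_K(u)-R_K(\hat{u}_K)).
\]
For the second piece, the identity $R_K\circ \mathcal{\hat I}^k_K=\mathcal{E}_K$ from Lemma~\ref{lem:local_red} and the linearity of $R_K$ give $\mathcal{E}_K(u)-R_K(\hat{u}_K)=-R_K(\hat{e}_K)$, so the upper bound in \eqref{local equivalent} controls it by $|\hat{e}_K|_{\fesE}$, which after summing over $K$ is $\|\hat{e}_h\|_{\fesz}$. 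For the first piece, Lemma~\ref{lem: approxmation} gives $\|\nabla^2(u-\mathcal{E}_K(u))\|_K\le\|u-\mathcal{E}_K(u)\|_{\sharp,K}\le C\|u-\Pi_K^{k+2}(u)\|_{\sharp,K}$. Squaring, summing, and using the consistency bound produces \eqref{eq:err1}.

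To derive the rates, I would bound each of the four addends in $\|u-\Pi_K^{k+2}(u)\|_{\sharp,K}^2$ separately, using Lemma~\ref{lemma: Polynomial approximation} to control cell norms and a trace inequality to pass from cells to boundaries. For $k\ge1$, the tightest term is $h_K^3\|\partial_n\Delta(u-\Pi_K^{k+2}(u))\|_{\dK}^2$, which I would handle by the multiplicative trace inequality \eqref{mult_tr} applied to $\nabla^3(u-\Pi_K^{k+2}(u))$; since $u\in H^{k+3}(K)$ and $k+3\ge4$, Lemma~\ref{lemma: Polynomial approximation} with $t=k+3$ and $m\in\{3,4\}$ supplies the requisite bounds, yielding $O(h_K^{k+1}|u|_{H^{k+3}(K)})$. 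The remaining boundary terms (Hessian-in-normal and Hessian-in-mixed directions) are smoother and give the same rate.

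The main obstacle, and the reason for distinguishing \eqref{eq:err3}, is that for $k=0$ the polynomial $\Pi_K^2(u)$ has vanishing third derivatives, so $\|\nabla^3(u-\Pi_K^{2}(u))\|_K=\|\nabla^3u\|_K$ does not decay in $h_K$, and the standard multiplicative trace inequality is not sharp enough. The fix is to invoke the fractional multiplicative trace inequality \eqref{f_mult_tr} with $\sigma=\min(s-1,1)\in(\tfrac12,1]$ on $\partial_n\Delta(u-\Pi_K^{2}(u))$, using the identity $|\nabla^3(u-\Pi_K^{2}(u))|_{H^\sigma(K)}=|u|_{H^{3+\sigma}(K)}$. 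This produces the $h_K(|u|_{H^3(K)}+h_K^\sigma|u|_{H^{3+\sigma}(K)})$ contribution, while the other three terms in $\|\cdot\|_{\sharp,K}$ are bounded more tightly by Lemma~\ref{lemma: Polynomial approximation} plus \eqref{mult_tr}; squaring and summing over cells yields \eqref{eq:err3}.
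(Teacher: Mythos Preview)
Your proposal is correct and follows essentially the same route as the paper's proof: the coercivity/consistency argument to bound $\|\hat{e}_h\|_{\fesz}$, the decomposition $u-R_K(\hat{u}_K)=(u-\mathcal{E}_K(u))-R_K(\hat{e}_K)$ combined with Lemma~\ref{lem: approxmation} and the upper bound in~\eqref{local equivalent}, and the passage to rates via Lemma~\ref{lemma: Polynomial approximation} together with the multiplicative trace inequality~\eqref{mult_tr} for $k\ge1$ and the fractional version~\eqref{f_mult_tr} for $k=0$. The only cosmetic difference is your sign convention for $\hat{e}_h$.
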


\begin{proof}
Set $\hat{e}_h : = \Ihk(u) - \hat{u}_h \in \fesz$, so that $a_h(\hat{e}_h,\hat{e}_h)= -\langle \delta_h,\hat{e}_h \rangle$. The coercivity property \eqref{coercivity} implies that
\begin{equation*}
\alpha \|\hat{e}_h\|_{\fesz}^2 \le a_h(\hat{e}_h,\hat{e}_h)= -\langle \delta_h,\hat{e}_h \rangle
\le \|\delta_h\|_{(\fesz)'} \|\hat{e}_h\|_{\fesz},
\end{equation*}
so that $\|\hat{e}_h\|_{\fesz}\le \frac{1}{\alpha}\|\delta_h\|_{(\fesz)'}$.
Since $\su \|\nabla^2 R_K(\hat{e}_K)\|_K^2\le C \|\hat{e}_h\|_{\fesz}^2$, we infer from Lemma \ref{lemma:consistency} that
\begin{equation*}
\su \|\nabla^2 R_K(\hat{e}_K)\|_K^2  \leq C \su \|u-\Pi_K^{k+2} (u)\|^2_{\sharp,K}.
\end{equation*}
Since $u-R_K(\hat{u}_K)=(u-\mathcal{E}_K(u))+R_K(\hat{e}_K)$,
the triangle inequality combined with Lemma~\ref{lem: approxmation} and the above bound
proves~\eqref{eq:err1}. Furthermore, \eqref{eq:err2} results from \eqref{eq:err1} and the approximation properties of $\Pi_K^{k+2}$
(using Lemma~\ref{lemma: Polynomial approximation} and the multiplicative trace inequality~\eqref{mult_tr}). Finally, \eqref{eq:err3} is proved similarly to \eqref{eq:err2}, but this time invoking the fractional multiplicative trace inequality~\eqref{f_mult_tr}, in particular to bound $h_K^{\frac32}\|\partial_n\Delta(u-\Pi_K^{2}(u))\|_{\dK}=h_K^{\frac32}\|\partial_n\Delta u\|_{\dK}$.
\end{proof}

\begin{remark}[Regularity gap] \label{rem:regularity}
The error estimates in Theorem~\ref{Theorem: main} require $u\in H^{2+s}(\Omega)$ with $s>\frac{3}{2}$. This global regularity requirement on the exact solution can be lowered to $s>1$ by using the techniques developed in \cite{ErnGuer2021} and \cite[Chap.~40\&41]{Ern_Guermond_FEs_II_2021} in the context of second-order elliptic PDEs. Indeed, the crucial point is to give a meaning to $\partial_n\Delta u$ on each mesh face, and this can be done by applying the tools from \cite{ErnGuer2021,Ern_Guermond_FEs_II_2021} to the field $\nabla\Delta u$. Notice that the requirement $u\in H^{2+s}(\Omega)$ with $s>\frac{3}{2}$ is, however, less stringent than the one resulting from achieving optimal decay rates as soon as $k\ge1$ (see \eqref{eq:err2}).
\end{remark}

\section{HHO method in arbitrary dimension}\label{sec:HHO 3D}

In this section, we adapt the material from the above two sections to devise and analyze an HHO method to
approximate the biharmonic problem in arbitrary dimension $d\ge2$. The main difference with the previous section is that the interpolation operator
$J_{\dK}^{k+1}$ is no longer available if $d\ge3$.
The idea in this section is to raise the degree of the face unknowns representing the solution trace to $(k+2)$, and to consider $L^2$-orthogonal projections to lead the analysis.
Thus, letting $k\geq 0$ be the polynomial degree,
the local HHO space considered in this section is such that for all $K\in\mesh$,
\begin{equation}\label{HHO space 3D}
\fesE: =\mathbb{P}^{k+2}(K) \times \mathbb{P}^{k+2}(\FK)\times\mathbb{P}^{k}(\FK).
\end{equation}

\begin{remark}[$d=3$]
In 3D, on tetrahedral meshes, one can also generalize the HHO method from the previous
section by considering the canonical hybrid finite element of degree $(k+2)$ on
the mesh faces.
\end{remark}

\subsection{Reconstruction, stabilization, discrete problem, and stability}

The local reconstruction operator is still defined by \eqref{reconstruction} (or, equivalently, \eqref{eq:rec_ipp}).
Instead, the local stabilization bilinear form $S_{\dK}$ has to be slightly modified and is now such that for all
$(\hat{v}_K, \hat{w}_K)\in \fesE \times \fesE$,
\begin{equation}\label{def: new stabilization}
S_{\dK}(\hat{v}_K,\hat{w}_K)
:= h_K^{-3} \big( v_{\dK}- v_K,w_{\dK}- {w}_K\big)_{\dK}
+ h_K^{-1} \big( \Pi^{k}_{\dK}(\gamma_{\dK}- \partial_n {v}_K),\Pi^{k}_{\dK}(\chi_{\dK}- \partial_n  {w}_K)\big)_{\dK}.
\end{equation}
Notice that only $L^2$-orthogonal projections are considered.
The local bilinear form $a_K$ is defined on $\fesE \times \fesE$ as in~\eqref{eq:def_aK}.

The global HHO space is now defined as
\begin{equation}
\fes : = \mathbb{P}^{k+2}(\mesh) \times \mathbb{P}^{k+2}(\Fall)\times \mathbb{P}^k(\Fall).
\end{equation}
Focusing for simplicity on homogeneous boundary conditions,
we consider the subspace $\fesz$ obtained by zeroing out all the components
attached to the mesh boundary faces.
The discrete HHO problem is as follows: Find $\hat{u}_h\in \fesz$ such that
\begin{equation}\label{new discrete problem}
a_h(\hat{u}_h,\hat{w}_h) = {\ell} (w_{\mesh}), \qquad \forall w_h\in \fesz,
\end{equation}
where $a_h$ and ${\ell}$ are still defined as in \eqref{bilinear form}.
Moreover, as in the 2D setting, the discrete problem~\eqref{new discrete problem} is amenable to static condensation, whereby
the cell unknowns are eliminated locally in every mesh cell, leading to a global problem where the only remaining unknowns
are those attached to the mesh faces, i.e., those in $\mathbb{P}^{k+2}(\Fall)\times \mathbb{P}^k(\Fall)$.

Finally, it is readily seen that the local stability and boundedness property stated in Lemma~\ref{lem:stab_bnd} still holds true.
Therefore, the discrete bilinear form $a_h$ is coercive on $\fesz$, so that the discrete problem~\eqref{new discrete problem} is well-posed owing to the Lax--Milgram lemma.

\begin{remark}[Comparison with \cite{BoDPGK:18}] \label{rem:cost_HHO_3D}
In the present HHO method, the global problem after static condensation
features $(2k+4)$, $k\ge0$, unknowns per mesh interface, whereas this number is
$(4k+4)$, $k\ge1$, for \cite{BoDPGK:18}.
\end{remark}

\begin{remark}[\Rev{Boundary conditions}]
\Rev{In the non-homogeneous case, similarly to Remark~\ref{rem:non-homo_2D}, the value of the components of the HHO solution attached to the mesh boundary faces is assigned by means of the projections $\Pi^{k+2}_{\dK}(g_D|_{\dK})$ and $\Pi^{k}_{\dK}(g_N|_{\dK})$. It is also possible to enforce the boundary conditions $u=\partial_{nn}u=0$ on $\partial \Omega$ by proceeding as in Remark~\ref{rem:non-homo_2D}. }
\end{remark}

\subsection{Polynomial approximation, consistency and error estimate}

For all $K\in \mesh$, the local reduction operator
$\mathcal{\hat{I}}^k_K : H^2(K) \rightarrow \fesE$
is now defined such that for all $v\in H^2(K)$,
\begin{equation}\label{new reduction}
\mathcal{\hat{I}}^k_K(v):= (\Pi_{K}^{k+2}(v),\Pi_{\dK}^{k+2} (v), \Pi_{\dK}^k
(\n_K{\cdot}\nabla v)) \in \fesE.
\end{equation}
We also define the operator $\tilde{\mathcal{E}}_K:= R_K \circ \mathcal{\hat{I}}^k_K: H^2(K) \rightarrow \mathbb{P}^{k+2}(K)$.
Although this operator is no longer the $H^2$-elliptic projection, we can show that it still enjoys the same approximation properties as those derived in Lemma~\ref{lem: approxmation}.
Recall that the $\|{\cdot}\|_{\sharp,K}$-norm is defined in~\eqref{Def: specical norm}.

\begin{lemma}[Polynomial approximation]
The following holds true for all $K\in \mesh$ and all $v\in H^{2+s}(K)$ with $s>\frac{3}{2}$:
\begin{equation}\label{new elliptic projection}
\|v - \tilde{\mathcal{E}}_K(v)\|_{\sharp, K} \leq C \|v - \Pi_K^{k+2}(v)\|_{\sharp, K}.
\end{equation}
Moreover, for all $K\in \mesh$ and all $v\in H^2(K)$, we have
\begin{equation}\label{new stabilization bound}
S_{\dK}(\mathcal{\hat{I}}^k_K(v),\mathcal{\hat{I}}^k_K(v))^{\frac12} \leq
C \| v - \Pi_K^{k+2}(v)\|_{\sharp,K}.
\end{equation}
\end{lemma}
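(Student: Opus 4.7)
The plan is to mimic the arguments of Lemma~\ref{lem:local_red} and Lemma~\ref{lem: approxmation}, adapting them to the fact that $J^{k+1}_{\dK}$ has been replaced by the $L^2$-orthogonal projection $\Pi^{k+2}_{\dK}$ and, crucially, that $\tilde{\mathcal{E}}_K$ is \emph{no longer} the $H^2$-elliptic projection of $v$.

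For \eqref{new stabilization bound}, set $\phi:=v-\Pi^{k+2}_K(v)$. Since $\Pi^{k+2}_K(v)|_{\dK}$ is a polynomial of degree $\le k+2$ on every face in $\FK$, it is fixed by $\Pi^{k+2}_{\dK}$, so $v_{\dK}-v_K=\Pi^{k+2}_{\dK}(v)-\Pi^{k+2}_K(v)=\Pi^{k+2}_{\dK}(\phi)$. Combining $L^2$-stability of $\Pi^{k+2}_{\dK}$ with the trace inequality \eqref{eq:approx_faces} yields $h_K^{-\frac32}\|\Pi^{k+2}_{\dK}(\phi)\|_{\dK}\le Ch_K^{-\frac32}\|\phi\|_{\dK}\le C\|\nabla^2\phi\|_K$. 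The second addend in $S_{\dK}$ is handled exactly as in the 2D proof, using $L^2$-stability of $\Pi^{k}_{\dK}$ and the trace bound on $\partial_n\phi$ in \eqref{eq:approx_faces}. Since $\|\nabla^2\phi\|_K\le\|v-\Pi^{k+2}_K(v)\|_{\sharp,K}$, this establishes \eqref{new stabilization bound}.

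For \eqref{new elliptic projection}, I would first split by the triangle inequality,
\[
\|v-\tilde{\mathcal{E}}_K(v)\|_{\sharp,K}\le\|v-\Pi^{k+2}_K(v)\|_{\sharp,K}+\|\tilde{\mathcal{E}}_K(v)-\Pi^{k+2}_K(v)\|_{\sharp,K},
\]
and, since $\tilde{\mathcal{E}}_K(v)-\Pi^{k+2}_K(v)\in\mathbb{P}^{k+2}(K)$, reduce the second term by the discrete trace and inverse inequalities \eqref{trace_inv}, \eqref{H1_inv} to controlling $\|\nabla^2(\tilde{\mathcal{E}}_K(v)-\Pi^{k+2}_K(v))\|_K$. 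Testing the defining equation \eqref{reconstruction} of $R_K(\mathcal{\hat{I}}_K^k(v))$ against $w:=\tilde{\mathcal{E}}_K(v)-\Pi^{k+2}_K(v)$ makes the volume term cancel and produces three boundary pairings. Using that $\partial_n\Delta w\in\mathbb{P}^{k-1}(\FK)$ and $\partial_{nn}w,\partial_{nt}w\in\mathbb{P}^{k}(\FK)$, $L^2$-orthogonality inserts $\Pi^{k+2}_{\dK}$ and $\Pi^{k}_{\dK}$ and rewrites the pairings in terms of $\phi$, namely
\[
-(\Pi^{k+2}_{\dK}(\phi),\partial_n\Delta w)_{\dK},\qquad -(\Pi^{k}_{\dK}(\partial_n\phi),\partial_{nn}w)_{\dK},\qquad -(\partial_t\Pi^{k+2}_{\dK}(\phi),\partial_{nt}w)_{\dK}.
\]
The first two are bounded by $C\|\nabla^2\phi\|_K\|\nabla^2 w\|_K$ via Cauchy--Schwarz, $L^2$-stability, \eqref{eq:approx_faces}, and the discrete trace inequality \eqref{trace_inv} applied to $w$.

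The main obstacle is the third pairing, since the 2D tool $\Pi^{k}_F(\partial_t v)=\partial_t(J^{k+1}_Fv)$ used in the proof of Lemma~\ref{lem:local_red} is unavailable here. The cleanest substitute is to apply the tangential inverse inequality \eqref{inv_tang} on each face to $\Pi^{k+2}_{\dK}(\phi)$ and then $L^2$-stability of $\Pi^{k+2}_{\dK}$, obtaining $\|\partial_t\Pi^{k+2}_{\dK}(\phi)\|_{\dK}\le Ch_K^{-1}\|\phi\|_{\dK}\le Ch_K^{\frac12}\|\nabla^2\phi\|_K$, which offsets the factor $h_K^{-\frac12}$ arising from the discrete trace bound on $\partial_{nt}w$. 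Summing, dividing by $\|\nabla^2 w\|_K$, and invoking $\|\nabla^2\phi\|_K\le\|v-\Pi^{k+2}_K(v)\|_{\sharp,K}$ gives $\|\tilde{\mathcal{E}}_K(v)-\Pi^{k+2}_K(v)\|_{\sharp,K}\le C\|v-\Pi^{k+2}_K(v)\|_{\sharp,K}$, whence \eqref{new elliptic projection} follows.
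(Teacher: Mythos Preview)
Your proof is correct and follows essentially the same route as the paper's own argument. In particular, the paper handles the critical tangential pairing exactly as you do: it rewrites $\Pi^{k+2}_{\dK}(v)-\Pi^{k+2}_K(v)=\Pi^{k+2}_{\dK}(\phi)$, applies the tangential inverse inequality \eqref{inv_tang} on each face together with the $L^2$-stability of $\Pi^{k+2}_{\dK}$, and then the trace bound \eqref{eq:approx_faces}, so your ``main obstacle'' is resolved identically in the paper (the only cosmetic differences are the order in which \eqref{new elliptic projection} and \eqref{new stabilization bound} are established and that the paper groups the three boundary terms into a single displayed estimate before passing to $\|\nabla^2\phi\|_K$).
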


\begin{proof}
(1) Using  the definition \eqref{reconstruction} of $R_K$, the definition \eqref{new reduction} of $\mathcal{\hat{I}}^k_K$, and the orthogonality property of the $L^2$-projections $\Pi_K^{k+2}$ and $\Pi_{\dK}^{k+2}$, we infer that for all $w \in \mathbb{P}^{k+2}(K)$,
\begin{equation*}
\begin{aligned}
(\nabla^2 \tilde{\mathcal{E}}_K(v), \nabla^2 w)_K = {}& (\nabla^2 \Pi_{K}^{k+2}(v) , \nabla^2 w)_K
- (\Pi_{\dK}^{k+2} (v)-\Pi_{K}^{k+2}(v), \partial_n \Delta  w)_{\dK} \\
&+ (\Pi_{\dK}^k (\partial_n v)
- \partial_n \Pi_{K}^{k+2}(v),  \partial_{nn}  w)_{\dK}
+ (\partial_t (\Pi_{\dK}^{k+2} (v)-\Pi_{K}^{k+2}(v)),  \partial_{nt}  w)_{\dK} \\
= {}& (\nabla^2 \Pi_{K}^{k+2}(v) , \nabla^2 w)_K
- (v-\Pi_{K}^{k+2}(v), \partial_n \Delta  w)_{\dK} \\
&+ (\partial_n (v-\Pi_{K}^{k+2}(v)), \partial_{nn}  w)_{\dK}
+ (\partial_t (\Pi_{\dK}^{k+2}(v)-\Pi_{K}^{k+2}(v)), \partial_{nt}  w)_{\dK},
\end{aligned}
\end{equation*}
Taking $w := \tilde{\mathcal{E}}_K(v) - \Pi_{K}^{k+2}(v)$, rearranging the terms,
and invoking the Cauchy--Schwarz inequality together with the inverse inequalities \eqref{trace_inv}, \eqref{H1_inv}, \eqref{inv_tang} leads to
\begin{multline*}
\|\nabla^2 (\tilde{\mathcal{E}}_K(v) - \Pi_{K}^{k+2}(v))\|_K \\ \le C\big( h_K^{-\frac32}
\|v-\Pi_{K}^{k+2}(v)\|_{\dK} + h_K^{-\frac12}
\|\partial_{n}(v-\Pi_{K}^{k+2}(v)) \|_{\dK}
+ h_K^{-\frac32} \|\Pi_{\dK}^{k+2}(v)-\Pi_{K}^{k+2}(v)\|_{\dK}\big).
\end{multline*}
Concerning the rightmost term, we observe that
$\Pi_{\dK}^{k+2}(v)-\Pi_{K}^{k+2}(v)=\Pi_{\dK}^{k+2}(v-\Pi_{K}^{k+2}(v))$, so that
using the $L^2$-stability of $\Pi^{k+2}_{\dK}$, we obtain
\begin{equation*}
\|\nabla^2 (\tilde{\mathcal{E}}_K(v) - \Pi_{K}^{k+2}(v))\|_K \le C\big( h_K^{-\frac32}
\|v-\Pi_{K}^{k+2}(v)\|_{\dK} + h_K^{-\frac12}
\|\partial_{n}(v-\Pi_{K}^{k+2}(v)) \|_{\dK}
\big).
\end{equation*}
The trace inequality \eqref{eq:approx_faces} then shows that
\begin{equation*}
\|\nabla^2 (\tilde{\mathcal{E}}_K(v) - \Pi_{K}^{k+2}(v))\|_K \le C \|\nabla^2(v-\Pi_{K}^{k+2}(v))\|_K.
\end{equation*}
The proof of~\eqref{new elliptic projection} can now be completed by invoking the
triangle inequality.
\\
(2) Let us now prove \eqref{new stabilization bound}. We have
\begin{equation*}
S_{\dK}(\mathcal{\hat{I}}^k_K(v),\mathcal{\hat{I}}^k_K(v))^{\frac12}
\le
h_K^{-\frac32} \| \Pi^{k+2}_{\dK}(v)- \Pi^{k+2}_K (v) \|_{\dK}
+ h_K^{-\frac12}
\| \Pi^{k}_{\dK}(\partial_{n}v- \partial_{n} \Pi^{k+2}_K (v)) \|_{\dK},
\end{equation*}
where we used that $\Pi^{k}_{\dK} \circ \Pi^{k}_{\dK}=\Pi^{k}_{\dK}$ for the second term
on the right-hand side.
To bound the first term on the right-hand side, we invoke the same arguments as in the
first step of this proof leading to
\begin{equation*}
h_K^{-\frac32} \| \Pi^{k+2}_{\dK}(v)- \Pi^{k+2}_K (v) \|_{\dK} \le C
\|\nabla^2(v-\Pi_{K}^{k+2}(v))\|_K.
\end{equation*}
Furthermore, the second term has already been bounded in the proof of Lemma~\ref{lem:local_red}. This completes the proof.
\end{proof}

The global reduction operator $\Ihk:H^2(\Omega)\rightarrow \fes$ is defined
such that for all $v\in H^2(\Omega)$,
\begin{equation} \label{def:Ihk_bis}
\Ihk(v):=\big( (\Pi_K^{k+2}(v))_{K\in\mesh},(\Pi^{k+2}_F(v))_{F\in\Fall},(\Pi_F^{k}(\n_F{\cdot}\nabla v))_{F\in\Fall} \big)\in \fes,
\end{equation}
so that the local components of $\Ihk(v)$ are $\mathcal{\hat{I}}^k_K(v|_K)$ for all $K\in\mesh$. The consistency error $\delta_h \in (\fesz)^\prime$ can now be defined as in~\eqref{eq:def_delta} and it can be bounded as in Lemma \ref{lemma:consistency}.
Finally, the error estimate and its proof are the same as those from Theorem \ref{Theorem: main} (and are not repeated for brevity).

\section{HHO method with Nitsche's boundary penalty}\label{sec:HHO-N}

In this section, we combine the HHO methods devised in the previous sections with Nitsche's boundary-penalty technique to enforce the boundary conditions in a weak manner. For brevity, we only discuss the HHO method presented in Section~\ref{sec:HHO 2D}, but the following developments can be readily applied to the HHO method from Section~\ref{sec:HHO 3D}. To allow for a bit more generality, we detail here the case of non-homogeneous boundary conditions. Thus, the model problem is as follows:
\begin{equation}
\Delta^2u=f\;\text{in $\Omega$}, \qquad u=g_D,\; \partial_nu=g_N\;\text{in $\partial\Omega$},
\end{equation}
where the assumptions on the boundary data $g_D$ and $g_N$ are given in Remark~\ref{rem:non_homo}. We set $\bG:=(\nabla u)|_{\partial\Omega}$ and notice that $\bG$
is explicitly known in terms of the
boundary data $g_D$ and $g_N$ since $\bG=g_N \n + (\partial_t g_D){\bm t}$.

Hinging on the ideas from \cite{BurEr:18,BCDE:21} for second-order elliptic PDEs, the
HHO-Nitsche (HHO-N) method devised in this section does not place
any discrete unknown on the
mesh boundary faces, but only in the mesh cells and the mesh interfaces.
Thus, for every mesh cell $K \in \mesh$, we define the subsets
\begin{equation}
\dKi := \overline{\dK\cap \Omega}, \qquad
\dKb:=\dK\cap \partial\Omega,
\end{equation}
as well as $\FKi: = \FK \cap \Fint$ and $\FKb: = \FK \cap \Fb$.
The mesh cells having at least one boundary face are collected in the subset
$\meshb:=\{K\in\mesh\;|\; \FKb\ne\emptyset\}$, and we set $\meshi:=\mesh\setminus\meshb$.

Letting $k\geq 0$ be the polynomial degree,
the local HHO-N space is such that for all $K\in\mesh$,
\begin{equation}\label{HHO-N space}
\fesE: =\mathbb{P}^{k+2}(K) \times \mathbb{P}^{k+1}(\FKi)\times\mathbb{P}^{k}(\FKi),
\end{equation}
and the corresponding global HHO-N space is now defined as
\begin{equation} \label{eq:global_HHO_N}
\fes : = \mathbb{P}^{k+2}(\mesh) \times \mathbb{P}^{k+1}(\Fint)\times \mathbb{P}^k(\Fint).
\end{equation}

\subsection{Reconstruction, stabilization, discrete problem, and stability}

The definition of the local reconstruction operator is slightly modified with respect to \eqref{reconstruction}. Indeed, $R_K\upi:\fesE\rightarrow \mathbb{P}^{k+2}(K)$ is now such that
for all $\hat{v}_K\in \fesE$,
\begin{equation}\label{eq:reconstruction HHO-N}
\begin{aligned}
(\nabla^2 R_K\upi (\hat{v}_K), \nabla^2 w)_{K}
={}&(\nabla ^2 {v}_K, \nabla^2 w)_{K} +
(v_K -v_{\partial K} , \partial_n \Delta  w)_{\dKi}
- (\partial_n v_K - \gamma_{\partial K},  \partial_{nn}  w)_{\dKi}
\\
&
- (\partial_t (v_K - v_{\partial K}),  \partial_{nt}  w)_{\dKi}
+ (v_K , \partial_n \Delta  w)_{\dKb}
- (\nabla  v_K, \nabla  \partial_{n}  w)_{\dKb},
\end{aligned}
\end{equation}
for all $w\in \mathbb{P}^{k+2}(K)^\perp$ together with the condition $(R_K\upi (\hat{v}_K),\xi)_K=(v_K,\xi)_K$ for all $\xi\in\mathbb{P}^1(K)$.
Equivalently, owing to the integration by parts formula~\eqref{eq:ipp2}, we have
\begin{equation}\label{eq:rec_ipp HHO-N}
(\nabla^2 R_K\upi(\hat{v}_K), \nabla^2 w)_K = (v_K, \Delta^2 w)_K
- (v_{\dK} , \partial_n \Delta  w)_{\dKi}
+ (\gamma_{\dK},  \partial_{nn}  w)_{\dKi}
+ (\partial_t v_{\dK},  \partial_{nt}  w)_{\dKi}.
\end{equation}
Dropping the integral over $\dKb$ for the three rightmost terms in~\eqref{eq:rec_ipp HHO-N} is, loosely speaking, a consistent operation in the case of homogeneous boundary conditions. In the general case, we need to lift the boundary data in every mesh cell $K\in \meshb$ by means of the lifting operator $\mathcal{L}_K:H^2(K)\rightarrow \mathbb{P}^{k+2}(K)$ such that for all $v\in H^2(K)$,
\begin{equation}\label{def: lifting}
(\nabla^2 \mathcal{L}_K(v), \nabla^2 w)_{K}
=- (v,  \partial_{n} \Delta w)_{\dKb}
+(\nabla v, \nabla {\partial_n  w} )_{\dKb},
\end{equation}
for all $w \in \mathbb{P}^{k+2}(K)^\perp$, together with the condition $(\mathcal{L}_K(v),\xi)_K =  0$ for all $\xi \in \mathbb{P}^{1}(K)$. Notice that $\mathcal{L}_K(u|_K)$
is fully computable from the boundary data $g_D$ and $g_N$.
For convenience, we set $\mathcal{L}_K(v):=0$ for all $K\in \meshi$.

The local stabilization bilinear form $S_{\dK}$ is also slightly modified and is now such that for all $(\hat{v}_K, \hat{w}_K)\in \fesE \times \fesE$,  we have
$S_{\dK}(\hat{v}_K,\hat{w}_K):= S\upi_{\dK}(\hat{v}_K,\hat{w}_K)+S\upb_{\dK}(v_K,w_K)$ with
\begin{align}
S\upi_{\dK}(\hat{v}_K,\hat{w}_K) :={}& h_K^{-3}
\big( J^{k+1}_{\dK}(v_{\dK}- v_K),J^{k+1}_{\dK}(w_{\dK}- {w}_K)\big)_{\dKi} \nonumber \\
& + h_K^{-1}\big( \Pi^{k}_{\dK}(\gamma_{\dK}- \partial_n {v}_K), \Pi^{k}_{\dK}(\chi_{\dK}- \partial_n  {w}_K) \big)_{\dKi}, \label{def: new stabilization int HHO-N} \\
S\upb_{\dK}(v_K,w_K) :={}&{}h_K^{-3}
\big( v_K, {w}_K\big)_{\dKb}
+h_K^{-1}\big(   \nabla {v}_K,  \nabla  {w}_K \big)_{\dKb},\label{def: new stabilization bd HHO-N}
\end{align}
where $S\upb_{\dK}$ represents the boundary-penalty contribution
and acts only on the cell components. We emphasize that
$S\upb_{\dK}$ does not need to be scaled by a weighting coefficient to be taken large
enough. Finally, the local bilinear form $a_K$ is defined on $\fesE \times \fesE$
as in~\eqref{eq:def_aK}.

The discrete HHO-N problem is as follows: Find $\hat{u}_h\in \fes$ such that
\begin{equation}\label{new discrete problem HHO-N}
	a_h(\hat{u}_h,\hat{w}_h) = {\ell}_h (\hat{w}_h), \qquad \forall \hat{w}_h\in \fes,
\end{equation}
where $a_h$ is still assembled cellwise as in \eqref{bilinear form} yielding
\begin{equation}
a_h(\hat{v}_h,\hat{w}_h) :=
\su (\nabla^2R_K\upi(\hat{v}_K),\nabla^2R_K\upi(\hat{w}_K))_K
+ \su S\upi_{\dK}(\hat{v}_K,\hat{w}_K)
+ \sum_{K\in\meshb} S\upb_{\dK}(v_K,w_K),
\end{equation}
whereas the linear
form ${\ell}_h$ now acts as follows:
\begin{equation}\label{linear form HHO-N}
\begin{aligned}
\ell_h(\hat{w}_h):= &\su (f,{w}_K)_K
+ \sum_{K\in\meshb} \Big\{ h_K^{-3}(g_D, w_{K})_{\dKb} + h_K^{-1} (\bG, \nabla w_{K})_{\dKb}\\
&+ \big(g_D,\partial_n \Delta R_K\upi(\hat{w}_{K}) \big)_{\dKb}
- \big(\bG, \nabla \partial_n R_K\upi(\hat{w}_{K}) \big)_{\dKb} \Big\}.
\end{aligned}
\end{equation}
Notice that
\begin{equation} \label{eq:rewriting_rhs}
\ell_h(\hat{w}_h)= \su (f,{w}_K)_K + \sum_{K\in\meshb} \Big\{
S\upb_{\dK}(u|_K,w_K) - (\nabla^2\mathcal{L}_K(u|_K),\nabla^2R_K\upi(\hat{w}_K))_K\Big\}.
\end{equation}
Notice also that $\ell_h(\hat{w}_h)=\su (f,{w}_K)_K =(f,w_{\mesh})_\Omega$ if the boundary
conditions are homogeneous (so that only the cell component of $\hat{w}_h$ is needed to
assemble $\ell_h$). As in the previous sections,
the discrete problem~\eqref{new discrete problem HHO-N} is amenable
to static condensation, whereby all the cell unknowns are eliminated
locally in every mesh cell, leading to a global problem where the only remaining unknowns
are those attached to the mesh interfaces.

It is easy to see that the local stability and boundedness property stated in Lemma~\ref{lem:stab_bnd} still holds true in the updated $H^2$-seminorm
\begin{equation}\label{H2_seminorm_elem HHO-N}
|\hat{v}_K|^2_{\fesE}: = \|\nabla^2 v_K\|_K^2
+ h_K^{-3} \| v_{\dK} - v_K\|_{\dKi}^2
+ h_K^{-1}\|  \gamma_{\dK} - \partial_n v_K\|_{\dKi}^2
+ S\upb_{\dK}(v_K,v_K),
\end{equation}
recalling that $S\upb_{\dK}(v_K,v_K)=h_K^{-3}  \|v_{K}\|^2_{\dKb}
+  h_K^{-1}   \|\nabla v_{K}\|^2_{\dKb}$.
Since $\|\hat{v}_h\|_{\fes}^2:=\su |\hat{v}_K|^2_{\fesE}$ defines a norm on the global
HHO space $\fes$ defined in~\eqref{eq:global_HHO_N},
the discrete bilinear form $a_h$ is coercive on $\fes$, and the discrete
problem~\eqref{new discrete problem HHO-N} is well-posed owing to the Lax--Milgram lemma.

\subsection{Polynomial approximation, consistency and error estimate}

For all $K\in \mesh$, we define the local reduction operator
$\mathcal{\hat{I}}^k_K : H^2(K) \rightarrow \fesE$ such that for all $v\in H^2(K)$,
\begin{equation}\label{interpoltation_N}
\mathcal{\hat{I}}^k_K(v):= \big(\Pi_{K}^{k+2}(v),J_{\dKi}^{k+1} (v), \Pi_{\dKi}^k (\n_K{\cdot}\nabla v)\big) \in \fesE,
\end{equation}
with obvious notation regarding the operators $J_{\dKi}^{k+1}$ and $\Pi_{\dKi}^k$.
Let us set
\begin{equation}
\mathcal{E}_K\upi := R_K\upi \circ \mathcal{\hat{I}}^k_K : H^2(K) \rightarrow \mathbb{P}^{k+2}(K).
\end{equation}
A straightforward verification (omitted for brevity) shows that the operator
$\mathcal{E}_K:= \mathcal{E}_K\upi + \mathcal{L}_K : H^2(K) \rightarrow
\mathbb{P}^{k+2}(K)$ coincides indeed with the $H^2$-elliptic projection
defined in~\eqref{H2_ell_proj}. Therefore, owing to Lemma~\ref{lem: approxmation},
there is $C$ such that for all $K\in \mesh$ and all
$v\in H^{2+s}(K)$, $s>\frac{3}{2}$,
\begin{equation}\label{elliptic projection N}
\|v - (\mathcal{E}\upi_K(v)+\mathcal{L}_K(v))\|_{\sharp, K} \leq
C \|v - \Pi_K^{k+2}(v)\|_{\sharp, K},
\end{equation}
where the $\|{\cdot}\|_{\sharp,K}$-norm is defined in~\eqref{Def: specical norm}.
Moreover, by restricting the arguments to the mesh interfaces in the proof of
Lemma~\ref{lem:local_red}, we infer that there is $C$ such that
for all $K\in\mesh$ and all $v\in H^2(K)$,
\begin{equation}\label{new stabilization bound HHO-N}
S\upi_{\dK}(\mathcal{\hat{I}}^k_K(v),\mathcal{\hat{I}}^k_K(v))^{\frac12} \leq
C \|\nabla^2 (v - \Pi_K^{k+2}(v))\|_K.
\end{equation}

The global reduction operator $\Ihk:H^2(\Omega)\rightarrow \fes$ is defined
such that for all $v\in H^2(\Omega)$,
\begin{equation}
\Ihk(v):=\big( (\Pi_K^{k+2}(v))_{K\in\mesh},(J^{k+1}_F(v))_{F\in\Fint},(\Pi_F^{k}(\n_F{\cdot}\nabla v))_{F\in\Fint} \big)\in \fes,
\end{equation}
recalling that $v$ and $\nabla v$ are single-valued on every $F\in\Fint$ for
all $v\in H^2(\Omega)$. As above, the local components of $\Ihk(v)$ attached to $K$
and its faces in $\FKi$ are $\mathcal{\hat{I}}^k_K(v|_K)$ for all $K\in\mesh$.
We define the consistency error $\delta_h \in (\fes)^\prime$ such that
$\langle \delta_h,\hat{w}_h \rangle:= \ell(w_{\mesh}) - a_h(\Ihk(u), \hat{w}_h)$,
for all $\hat{w}_h\in \fes$,
where $\langle\cdot,\cdot\rangle$ now denotes the duality pairing between $(\fes)'$ and $\fes$.

\begin{lemma}[Consistency]\label{lemma: new consistency HHO-N}
Assume that $u\in H^{2+s}(\Omega)$ with $s>\frac{3}{2}$. The following holds true:
\begin{equation} \label{new consistency HHO-N}
\|\delta_h\|_{(\fes)'} := \sup_{\hat{w}_h\in\fes}
\frac{|\langle \delta_h,\hat{w}_h \rangle|}{\|\hat{w}_h\|_{\fes}}
\leq C \left( \su \|u-\Pi_K^{k+2} (u)\|^2_{\sharp,K}\right)^{\frac12}.
\end{equation}
\end{lemma}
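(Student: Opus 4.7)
The plan is to follow the proof of Lemma~\ref{lemma:consistency} while carefully tracking the extra contributions stemming from the Nitsche boundary-penalty. First, I would substitute the alternative form \eqref{eq:rewriting_rhs} of $\ell_h$ and the identity $R_K\upi \circ \mathcal{\hat{I}}^k_K = \mathcal{E}\upi_K$ into the definition of $\langle \delta_h,\hat{w}_h\rangle$, yielding
\begin{multline*}
\langle \delta_h,\hat{w}_h\rangle = \su\Big\{(f,w_K)_K - (\nabla^2 \mathcal{E}\upi_K(u),\nabla^2 R_K\upi(\hat{w}_K))_K - S\upi_{\dK}(\mathcal{\hat{I}}^k_K(u),\hat{w}_K)\Big\} \\
+ \sum_{K\in\meshb}\Big\{S\upb_{\dK}(u-\Pi_K^{k+2}(u),w_K) - (\nabla^2 \mathcal{L}_K(u),\nabla^2 R_K\upi(\hat{w}_K))_K\Big\}.
\end{multline*}
The crucial observation is that the lifting contribution merges with $(\nabla^2 \mathcal{E}\upi_K(u),\cdot)_K$ (using $\mathcal{L}_K\equiv 0$ on $\meshi$) to reconstruct $(\nabla^2 \mathcal{E}_K(u),\nabla^2 R_K\upi(\hat{w}_K))_K$, where $\mathcal{E}_K = \mathcal{E}\upi_K + \mathcal{L}_K$ is the full $H^2$-elliptic projection from \eqref{H2_ell_proj}; so the elliptic projection reappears even though only $\mathcal{E}\upi_K$ is built into $a_h$.

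Next, I would integrate $(f,w_K)_K=(\Delta^2 u,w_K)_K$ by parts cellwise via \eqref{eq:ipp2}, and expand $(\nabla^2 \mathcal{E}_K(u),\nabla^2 R_K\upi(\hat{w}_K))_K$ by applying the reconstruction identity \eqref{eq:reconstruction HHO-N} to $\hat{w}_K$ with test function $w := \mathcal{E}_K(u)$ (the $\mathbb{P}^1(K)$ component is harmless since all the ensuing face integrals vanish on it). The bulk terms $(\nabla^2 \mathcal{E}_K(u),\nabla^2 w_K)_K$ cancel, leaving only face contributions, which I split over $\dKi$ and $\dKb$. Setting $\eta := u - \mathcal{E}_K(u)$ cellwise and exploiting the single-valuedness of $\partial_n\Delta u$, $\partial_{nn}u$, $\partial_{nt}u$ on interfaces together with that of the face components of $\hat{w}_h$, the interior-face contributions collapse to the same expression as in \eqref{the error relation} with $\dKi$ in place of $\dK$, namely $(\partial_n\Delta \eta,w_K-w_{\dK})_{\dKi} - (\partial_{nn}\eta,\partial_n w_K - \chi_{\dK})_{\dKi} - (\partial_{nt}\eta,\partial_t(w_K-w_{\dK}))_{\dKi}$. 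On boundary faces, no matching face unknown is available, so I am left with the unmatched triple $(\partial_n\Delta\eta,w_K)_{\dKb} - (\partial_{nn}\eta,\partial_n w_K)_{\dKb} - (\partial_{nt}\eta,\partial_t w_K)_{\dKb}$, plus the penalty residual $S\upb_{\dK}(u-\Pi_K^{k+2}(u),w_K)$.

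To conclude, the interior contributions are bounded exactly as in Lemma~\ref{lemma:consistency}: Cauchy--Schwarz together with the inverse estimate \eqref{inv_tang} controls the jump-type terms by $C\|\eta\|_{\sharp,K}|\hat{w}_K|_{\fesE}$, while \eqref{new stabilization bound HHO-N} yields $|S\upi_{\dK}(\mathcal{\hat{I}}^k_K(u),\hat{w}_K)| \le C\|\nabla^2(u-\Pi_K^{k+2}(u))\|_K |\hat{w}_K|_{\fesE}$. The new boundary-face terms are bounded by Cauchy--Schwarz with weights chosen so that the $\eta$-factor fits into $\|\eta\|_{\sharp,K}$ and the $w_K$-factor into $S\upb_{\dK}(w_K,w_K)^{1/2}$; for instance $|(\partial_n\Delta\eta,w_K)_{\dKb}| \le h_K^{3/2}\|\partial_n\Delta\eta\|_{\dKb}\cdot h_K^{-3/2}\|w_K\|_{\dKb} \le \|\eta\|_{\sharp,K}\, S\upb_{\dK}(w_K,w_K)^{1/2}$, and similarly with weight $h_K^{\pm 1/2}$ for the two remaining terms. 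The penalty residual is estimated by Cauchy--Schwarz followed by the trace inequality \eqref{eq:approx_faces}, giving $S\upb_{\dK}(u-\Pi_K^{k+2}(u),u-\Pi_K^{k+2}(u))^{1/2} \le C\|\nabla^2(u-\Pi_K^{k+2}(u))\|_K$. Invoking \eqref{elliptic projection N} to replace $\|\eta\|_{\sharp,K}$ by $\|u-\Pi_K^{k+2}(u)\|_{\sharp,K}$ and a global Cauchy--Schwarz over $K\in\mesh$ then yields \eqref{new consistency HHO-N}. The main obstacle is the careful bookkeeping that fuses the lifting term in $\ell_h$ with the partial elliptic projection in $a_h$ into the full $H^2$-elliptic projection, so that the ensuing analysis reduces to that of Lemma~\ref{lemma:consistency} augmented by the three clean boundary-face residuals.
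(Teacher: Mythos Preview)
Your proposal is correct and follows essentially the same route as the paper's proof. The only cosmetic difference is organizational: the paper adds the lifting term $\sum_{K\in\meshb}(\nabla^2\mathcal{L}_K(u),\nabla^2 R_K\upi(\hat{w}_K))_K$ to both $\ell_h(\hat{w}_h)$ and $a_h(\Ihk(u),\hat{w}_h)$ before expanding, whereas you start directly from the rewritten form~\eqref{eq:rewriting_rhs}; and the paper keeps the boundary residual in the compact form $-(\nabla\partial_n\eta,\nabla w_K)_{\dKb}$ rather than splitting it into the $nn$ and $nt$ pieces, which are of course equivalent and bounded the same way.
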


\begin{proof}
The proof is similar to that of Lemma~\ref{lemma:consistency},
so we only sketch it.
Let $\hat{w}_h\in \fes$ having local components $(w_K,w_{\dK},\chi_{\dK})$
for all $K\in\mesh$. On the one hand, we have
\begin{align*}
&\ell_h(\hat{w}_h) + \sum_{K\in\meshb} (\nabla^2 \mathcal{L}_K(u|_K),
\nabla^2 (R_K\upi(\hat{w}_{K})) )_{K} \\
&=
\su \Big\{ (\nabla^2 u, \nabla^2 {w}_K)_K
+ (\partial_n \Delta u , w_K )_{\dKb}
- (\nabla  \partial_n u , \nabla w_K )_{\dKb}
\\
&\quad + (\partial_n \Delta u , w_K - w_{\dK})_{\dKi}
- (\partial_{nn}  u , \partial_{n} w_K  - \chi_{\dK})_{\dKi}
- (\partial_{nt}  u , \partial_{t} (w_K - w_{\dK}))_{\dKi} \Big\} \\
&\quad + \sum_{K\in\meshb} \Big\{
h_K^{-3} \left( u, w_{K}\right)_{\dKb}
+h_K^{-1} \left( \nabla u, \nabla w_{K}\right)_{\dKb}
\Big\}.
\end{align*}
On the other hand, recalling that $\mathcal{E}_K= \mathcal{E}_K\upi + \mathcal{L}_K$,
we have
\begin{align*}
&a_h(\Ihk(u), \hat{w}_h) + \sum_{K\in\meshb} (\nabla^2 \mathcal{L}_K(u|_K),
\nabla^2 (R_K\upi(\hat{w}_{K})) )_{K} \\
&= \su \Big\{ (\nabla^2 \mathcal{E}_K(u),\nabla^2 R\upi_K(\hat{w}_K))_K
+ S\upi_{\dK}(\mathcal{\hat{I}}^k_K(u),\hat{w}_K) \Big\}
+ \sum_{K\in\meshb} S\upb_{\dK}(\Pi_K^{k+2}(u),w_K),
\end{align*}
so that we have
\begin{align*}
&a_h(\Ihk(u), \hat{w}_h) + \sum_{K\in\meshb} (\nabla^2 \mathcal{L}_K(u|_K),
\nabla^2 (R_K\upi(\hat{w}_{K})) )_{K} \\
&= \su \Big\{ (\nabla^2 \mathcal{E}_K(u), \nabla^2 {w}_K)_K
+ (\partial_n \Delta \mathcal{E}_K(u), w_K)_{\dKb }
- (\nabla \partial_n\mathcal{E}_K(u), \nabla w_K)_{\dKb }
\\
&\quad + (\partial_n \Delta \mathcal{E}_K(u), w_K - w_{\dK})_{\dKi}
- (\partial_{nn} \mathcal{E}_K(u), \partial_{n} w_K  - \chi_{\dK})_{\dKi}
- (\partial_{nt} \mathcal{E}_K(u), \partial_{t} (w_K - w_{\dK}))_{\dKi}
\\
&\quad + S\upi_{\dK}(\mathcal{\hat{I}}^k_K(u),\hat{w}_K) \Big\} + \sum_{K\in\meshb} \Big\{
h_K^{-3} ( \Pi_{K}^{k+2}(u), w_{K})_{\dKb}
+h_K^{-1} ( \nabla \Pi_{K}^{k+2}(u), \nabla w_{K})_{\dKb}\Big\}.
\end{align*}
Defining the function $\eta$ cellwise as $\eta|_K:=u|_K-\mathcal{E}_K(u|_{K})$
for all $K\in\mesh$, we infer that
\begin{align*}
&\langle \delta_h,\hat{w}_h \rangle
= \su \Big\{ (\nabla^2 \eta, \nabla^2 {w}_K)_K
+ (\partial_n \Delta \eta, w_K )_{\dKb}
- (\nabla \partial_n\eta, \nabla w_K )_{\dKb} \\
& + (\partial_n \Delta \eta, w_K - w_{\dK})_{\dKi}
- (\partial_{nn} \eta, \partial_{n} w_K  - \chi_{\dK})_{\dKi}
- (\partial_{nt} \eta, \partial_{t} (w_K - w_{\dK}))_{\dKi}
- S\upi_{\dK}(\mathcal{\hat{I}}^k_K(u),\hat{w}_K)\Big\} \\
&
+ \sum_{K\in\meshb} \Big\{ h_K^{-3} ( u-\Pi_{K}^{k+2}(u), w_{K})_{\dKb}
+h_K^{-1} ( \nabla  (u-\Pi_{K}^{k+2}(u)), \nabla w_{K})_{\dKb}\Big\}.
\end{align*}
All the terms on the right-hand side can now be bounded
by means of the Cauchy--Schwarz inequality.
For the first, fourth, fifth, and sixth terms, we use~\eqref{elliptic projection N},
for the seventh term (involving $S\upi_{\dK}$), we use~\eqref{new stabilization bound HHO-N},
and for the eighth and ninth terms, we invoke the trace inequality~\eqref{eq:approx_faces}.
\end{proof}

We are now ready to establish our main error estimate.

\begin{theorem}[$H^2$-error estimate]\label{Theorem: main N}
Assume that $u\in H^{2+s}(\Omega)$ with $s>\frac{3}{2}$. The following holds true:
\begin{equation} \label{eq:err1_N}
\su \|\nabla^2 (u-R\upi_K(\hat{u}_K)-\mathcal{L}_K(u))\|_K^2
\leq C 	\su \|u-\Pi_K^{k+2}(u)\|_{\sharp,K}^2.
\end{equation}
Consequently, if $k\ge1$, assuming $u|_K\in H^{k+3}(K)$ for all $K\in\mesh$, we have
\begin{equation} \label{eq:err2_N}
\su \|\nabla^2 (u-R\upi_K(\hat{u}_K)-\mathcal{L}_K(u))\|_K^2 \leq C
\su \big(h_K^{k+1}|u|_{H^{k+3}(K)}\big)^2,
\end{equation}
and if $k=0$, letting $\sigma:=\min(s-1,1)\in (\frac12,1]$, we have
\begin{equation} \label{eq:err3_N}
\su \|\nabla^2 (u-R\upi_K(\hat{u}_K)-\mathcal{L}_K(u))\|_K^2 \leq C \su \big(h_K(|u|_{H^{3}(K)}+h_K^{\sigma}|u|_{H^{3+\sigma}(K)})\big)^2.
\end{equation}
\end{theorem}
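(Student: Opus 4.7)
The plan is to follow the same three-step pattern as in the proof of Theorem~\ref{Theorem: main}: derive a Galerkin-like error identity, invoke coercivity of $a_h$ together with the consistency bound, and then convert the bound on the HHO error into an $H^2$-bound on the reconstructed solution by inserting the $H^2$-elliptic projection. The only real structural novelty compared with the earlier theorem is that here the reconstruction $R_K\upi$ captures only the ``interior'' part of the elliptic projection, so that the lifting $\mathcal{L}_K(u)$ has to be added explicitly to recover $\mathcal{E}_K$; this is already encoded in the statement through the quantity $u - R\upi_K(\hat{u}_K) - \mathcal{L}_K(u)$.

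First I would set $\hat{e}_h := \Ihk(u) - \hat{u}_h \in \fes$. Subtracting the discrete problem~\eqref{new discrete problem HHO-N} from the definition of $\delta_h$ yields $a_h(\hat{e}_h,\hat{w}_h) = -\langle \delta_h,\hat{w}_h\rangle$ for all $\hat{w}_h\in \fes$. Since $a_h$ is coercive on $\fes$ for the updated norm $\|\cdot\|_{\fes}$ defined via~\eqref{H2_seminorm_elem HHO-N} (this is the analogue of Corollary~\ref{lemma: coercivit}, valid in the HHO-N setting as noted in the paper), testing with $\hat{w}_h = \hat{e}_h$ gives $\alpha \|\hat{e}_h\|_{\fes}^2 \le \|\delta_h\|_{(\fes)'}\|\hat{e}_h\|_{\fes}$, so that $\|\hat{e}_h\|_{\fes} \le \alpha^{-1}\|\delta_h\|_{(\fes)'}$. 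Invoking Lemma~\ref{lemma: new consistency HHO-N}, I obtain
\begin{equation*}
\|\hat{e}_h\|_{\fes}^2 \le C \su \|u-\Pi_K^{k+2}(u)\|_{\sharp,K}^2.
\end{equation*}

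Next I would convert this HHO-error bound into the target reconstruction bound. Using $\mathcal{E}_K\upi = R_K\upi\circ \mathcal{\hat{I}}^k_K$ and the identity $\mathcal{E}_K = \mathcal{E}_K\upi + \mathcal{L}_K$, I write the local decomposition
\begin{equation*}
u - R_K\upi(\hat{u}_K) - \mathcal{L}_K(u) = \bigl(u - \mathcal{E}_K(u)\bigr) + R_K\upi(\hat{e}_K),
\end{equation*}
apply the triangle inequality, square, and sum over $K\in\mesh$. The first contribution is controlled by the approximation estimate~\eqref{elliptic projection N}, which directly gives $\|\nabla^2(u-\mathcal{E}_K(u))\|_K \le C\|u-\Pi_K^{k+2}(u)\|_{\sharp,K}$. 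The second contribution is controlled by the upper bound in Lemma~\ref{lem:stab_bnd} in its HHO-N form, which yields $\su \|\nabla^2 R_K\upi(\hat{e}_K)\|_K^2 \le C\|\hat{e}_h\|_{\fes}^2$, and then by the previous step. Combining these establishes~\eqref{eq:err1_N}.

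Finally, the convergence rates~\eqref{eq:err2_N} and~\eqref{eq:err3_N} follow by bounding $\|u-\Pi_K^{k+2}(u)\|_{\sharp,K}$ with the approximation properties of $\Pi_K^{k+2}$ from Lemma~\ref{lemma: Polynomial approximation}. For the volume term $\|\nabla^2(u-\Pi_K^{k+2}(u))\|_K$ this is immediate, while the three boundary-trace terms in the $\sharp$-norm are treated using the multiplicative trace inequality~\eqref{mult_tr} for $k\ge1$ and the fractional trace inequality~\eqref{f_mult_tr} when $k=0$, exactly as in the closing step of Theorem~\ref{Theorem: main}; the only care needed is in handling $h_K^{3/2}\|\partial_n\Delta(u-\Pi_K^{2}(u))\|_{\dK}$ when $k=0$, for which the term $\partial_n\Delta \Pi_K^{2}(u)$ vanishes and one bounds $h_K^{3/2}\|\partial_n\Delta u\|_{\dK}$ via~\eqref{f_mult_tr} with $s=\sigma$. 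The main obstacle is purely bookkeeping: keeping track of which pieces of the error live in the interior reconstruction versus in the lifting $\mathcal{L}_K(u)$, so that one correctly recognises $u-\mathcal{E}_K(u)$ after subtraction and can legitimately appeal to~\eqref{elliptic projection N} rather than to a separate estimate for $\mathcal{E}_K\upi$ alone.
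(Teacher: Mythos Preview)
Your proposal is correct and follows essentially the same approach as the paper's own proof: set $\hat{e}_h:=\Ihk(u)-\hat{u}_h$, use coercivity of $a_h$ on $\fes$ together with Lemma~\ref{lemma: new consistency HHO-N} to bound $\su\|\nabla^2 R\upi_K(\hat{e}_K)\|_K^2$, then decompose $u-R\upi_K(\hat{u}_K)-\mathcal{L}_K(u)=(u-\mathcal{E}_K(u))+R\upi_K(\hat{e}_K)$ and invoke~\eqref{elliptic projection N}. Your identification of $u-\mathcal{E}_K\upi(u)-\mathcal{L}_K(u)$ with $u-\mathcal{E}_K(u)$ via $\mathcal{E}_K=\mathcal{E}_K\upi+\mathcal{L}_K$ is exactly the bookkeeping the paper relies on, and the treatment of the rates for $k\ge1$ and $k=0$ matches the paper's reference back to Theorem~\ref{Theorem: main}.
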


\begin{proof}
As in the proof of Theorem~\ref{Theorem: main}, one shows that
\Rev{$$\su \|\nabla^2 R\upi_K(\hat{e}_K)\|_K^2  \leq C \su \|u-\Pi_K^{k+2} (u)\|^2_{\sharp,K},$$}
where $\hat{e}_h : = \Ihk(u) - \hat{u}_h \in \fes$ is the discrete error.
Since $u-R\upi_K(\hat{u}_K)-\mathcal{L}_K(u)=(u-\mathcal{E}\upi_K(u)-\mathcal{L}_K(u))
+R\upi_K(\hat{e}_K)$ for all $K\in\mesh$,
the triangle inequality combined with~\eqref{elliptic projection N} and the above bound
on the discrete error proves~\eqref{eq:err1}.
Finally, \eqref{eq:err2_N} and \eqref{eq:err3_N} are established by invoking the same arguments as above.
\end{proof}

\section{Numerical examples} \label{sec:Numerical example}

In this section, we present numerical examples to illustrate the theoretical results on the present HHO methods and also to compare their numerical performance with respect to other methods from the literature.

\subsection{Convergence rates and computational performance of HHO methods} \label{sec:conv_rates}

We select $f$ on $\Omega:=(0,1)^2$ so that the exact solution to \eqref{pde} is
$u(x,y) = \sin(\pi x)^2  \sin(\pi y)^2$ with homogeneous
boundary conditions. We consider the two HHO methods analyzed above. For clarity, we
term ``HHO(A)'' the method  introduced in Section \ref{sec:HHO 2D} with discrete
unknowns in $\mathbb{P}^{k+2}(\mesh) \times \mathbb{P}^{k+1}(\Fint) \times
\mathbb{P}^{k}(\Fint)$ and ``HHO(B)'' the method introduced in Section \ref{sec:HHO 3D}
with discrete unknowns in $\mathbb{P}^{k+2}(\mesh) \times
\mathbb{P}^{k+2}(\Fint) \times \mathbb{P}^{k}(\Fint)$. Additionally, we
consider the method termed ``HHO(C)'' mentioned in Remark \ref{cheap HHO}
where the discrete unknowns are in $\mathbb{P}^{k+1}(\mesh)
\times \mathbb{P}^{k+1}(\Fint) \times \mathbb{P}^{k}(\Fint)$.
We employ polynomial degrees $k\in\{0,\ldots,5\}$.
Since we consider various polynomial degrees, and despite an $hp$-analysis falls beyond
the present scope, we implement the stabilization terms in \eqref{def: stabilization},
\eqref{def: new stabilization}, and \eqref{eq:cheap_HHO}
with $h_K^{-1}$ replaced by $(k+1)^2 h_K^{-1}$
for all $K\in\mesh$. All the computations were run with
Matlab R2018a on the NEF
platform at INRIA Sophia Antipolis M\'editerran\'ee using 12 cores,
and all the linear systems after static condensation are solved using the \verb*|backslash| function. \Rev{The  algorithm for solving the symmetric positive definite linear systems is the Cholesky factorization.}

\begin{figure}[htb]
\centering
\includegraphics[scale=0.2]{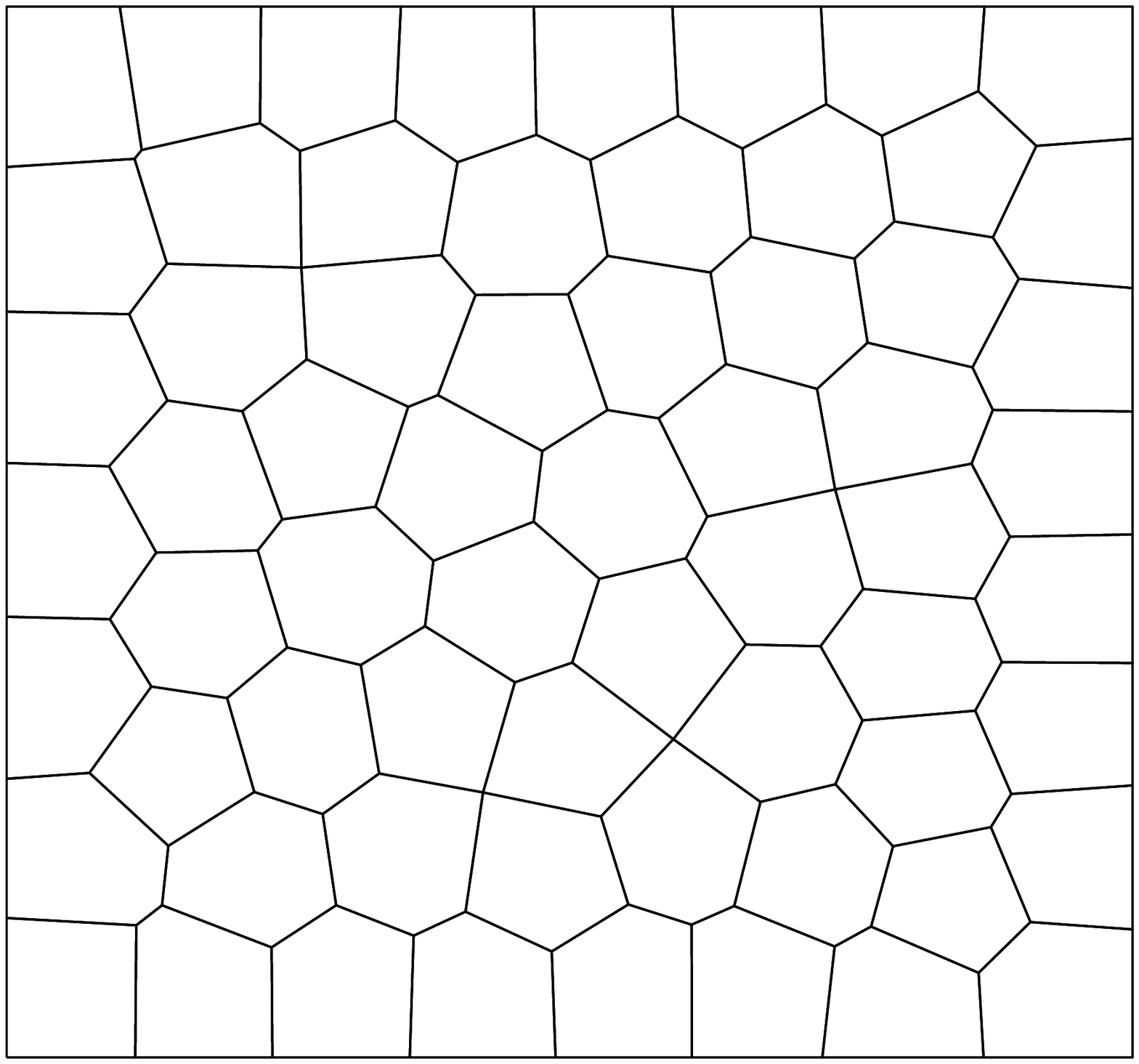}
\hspace{1cm}
\includegraphics[scale=0.2]{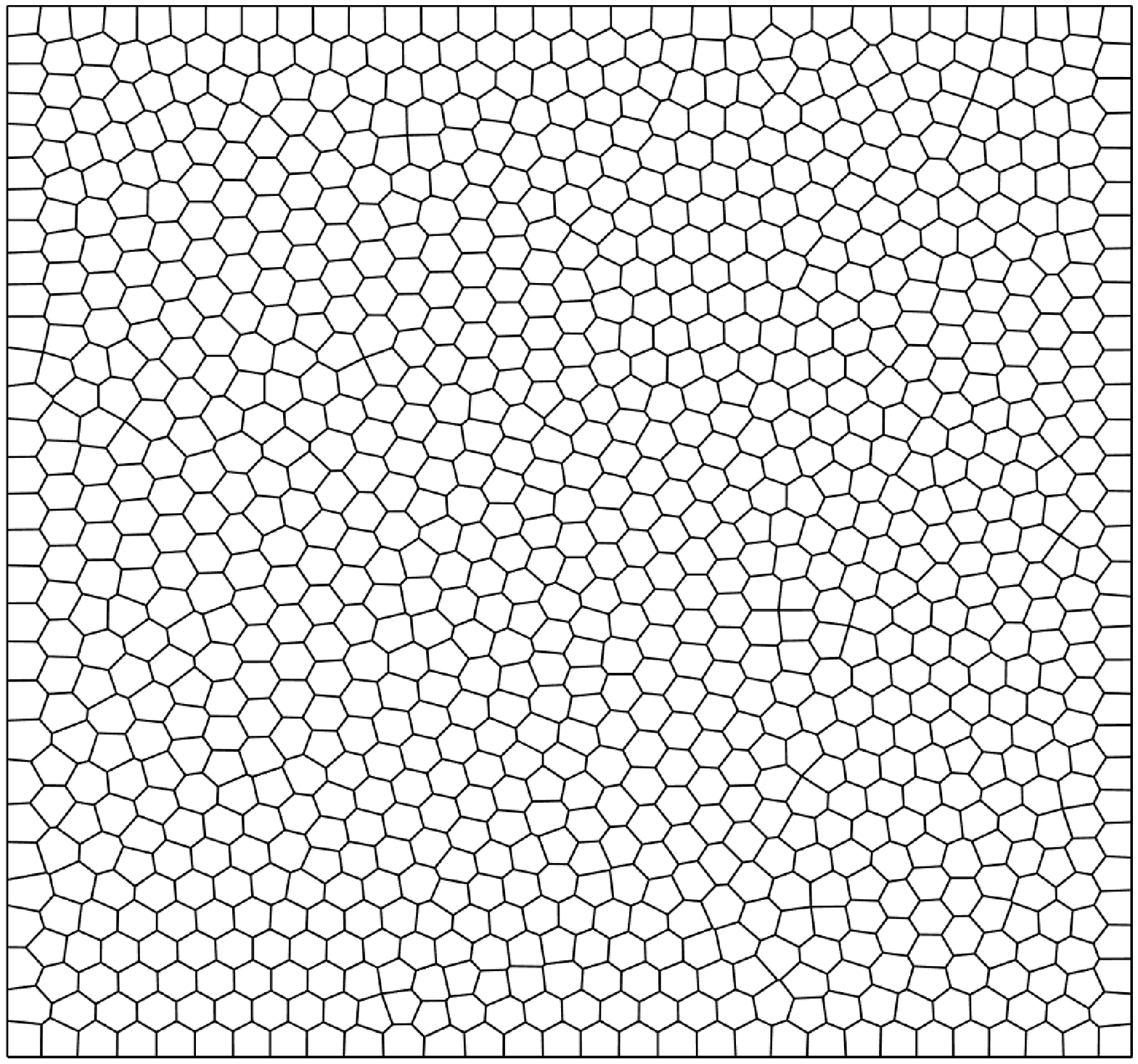}
\caption{Two examples of polygonal (Voronoi-like) meshes with $64$ (left) and $1{,}024$ polygons}.\label{ex1:mesh_figure}
\end{figure}

\begin{figure}[htb]
\centering
\includegraphics[scale=0.32]{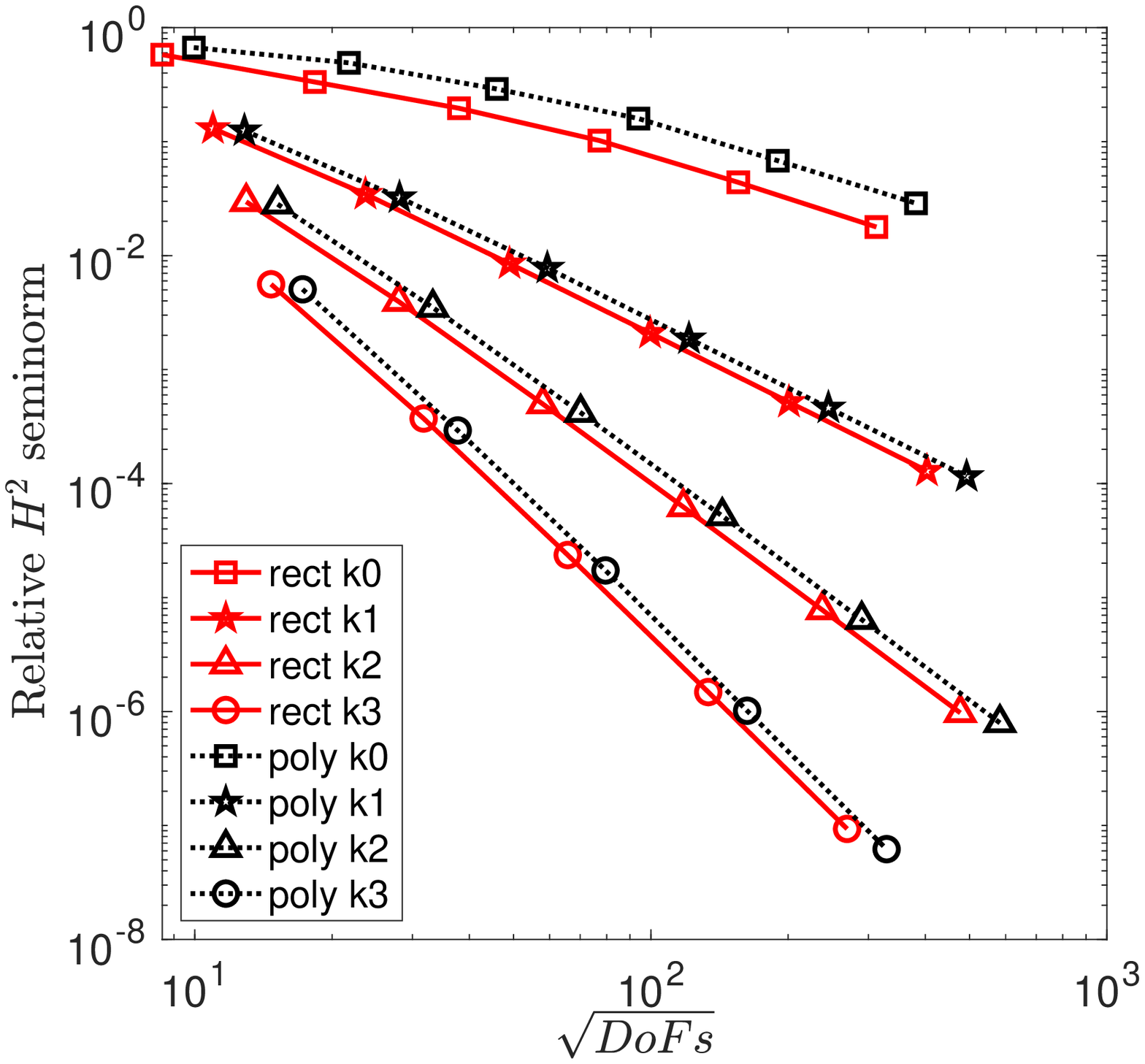}
\hspace{2cm}
\includegraphics[scale=0.32]{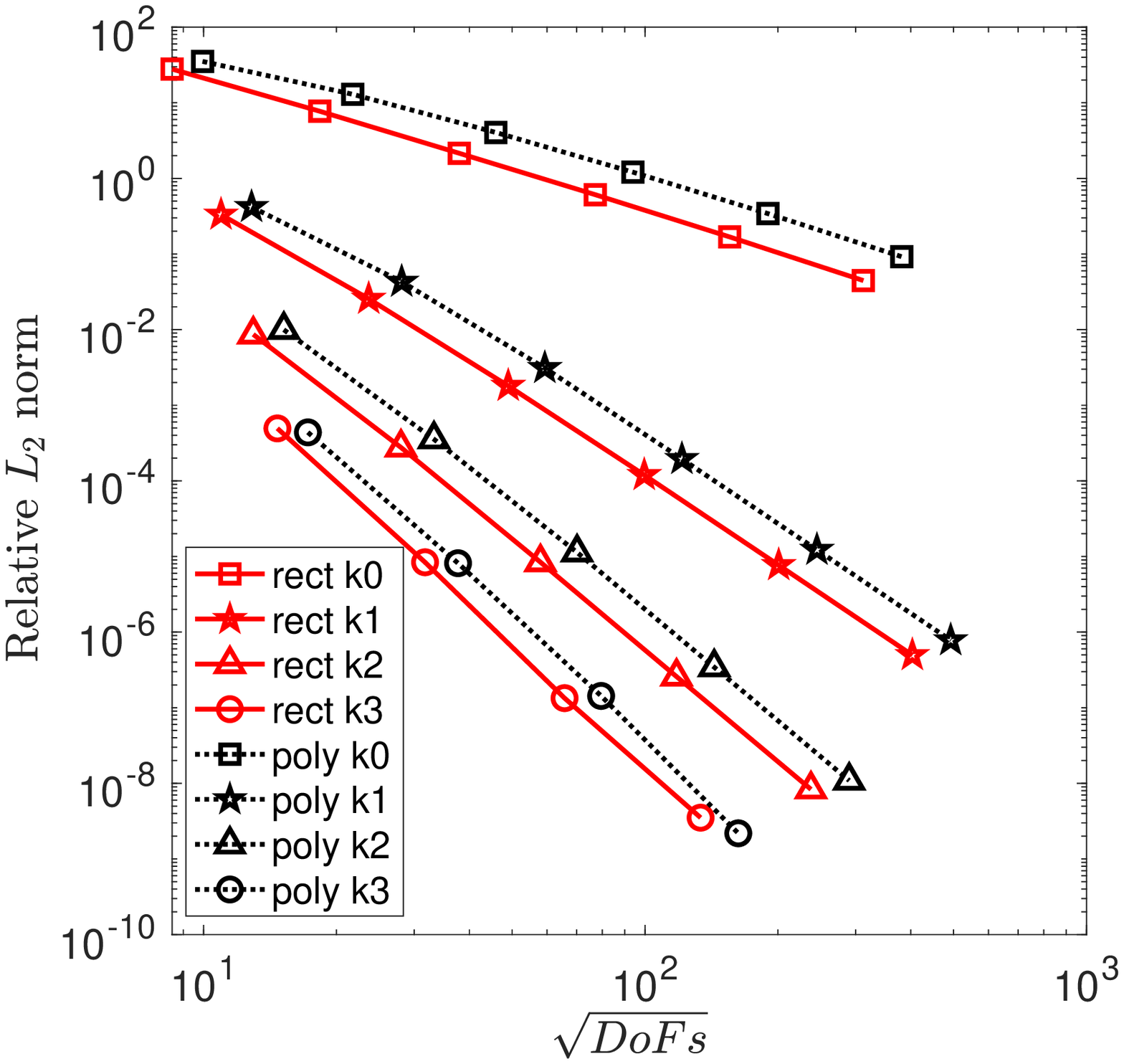}
\caption{
\label{ex1:h-refine}
Convergence of HHO(A) method in $H^2$- and $L^2$-(semi)norms on polygonal and rectangular meshes.}
\end{figure}

Let us first verify the convergence rates obtained with the HHO(A) method
with $k\in\{0,1,2,3\}$. We consider a sequence of successively refined rectangular meshes
and a sequence of successively refined polygonal (Voronoi-like) meshes
(generated through the PolyMesher Matlab library \cite{polymesher}).
Two examples of polygonal meshes are shown in Figure \ref{ex1:mesh_figure}
(in general, the cells do not contain more than $8$ edges).
We measure relative errors in the (broken) $H^2$-seminorm and in the $L^2$-norm,
both quantities being evaluated using the reconstruction of the HHO solution cellwise.
The errors are reported in Figure \ref{ex1:h-refine} as a function of
$\mathrm{DoFs}^{1/2}$, where $\mathrm{DoFs}$ denotes the total number of globally coupled
discrete unknowns (that is, the face unknowns). We observe that the $H^2$-error
converges at the optimal rate $O(h^{k+1})$, as predicted in Theorem \ref{Theorem: main}.
The $L^2$-error converges at the optimal rate $O(h^{k+3})$, except for $k=0$ where the rate
is only $O(h^2)$; all these rates are consistent with what can be expected
from a duality argument (not detailed herein for brevity; see \cite{BoDPGK:18,MuWaY:14}
for examples of this argument for HHO and WG methods).

\begin{figure}[!h]
\centering
\includegraphics[scale=0.3]{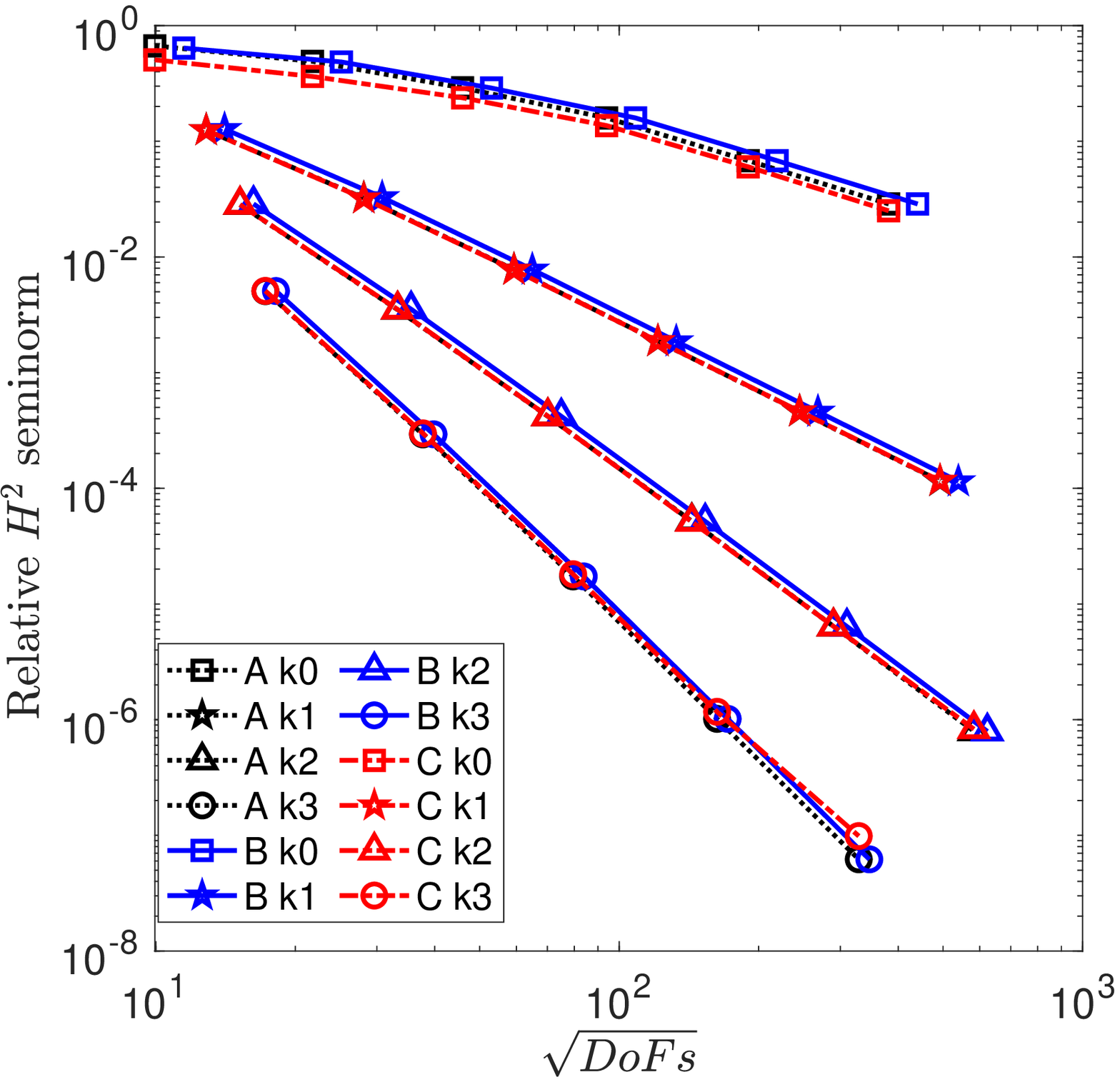}
\hspace{2cm}
\includegraphics[scale=0.3]{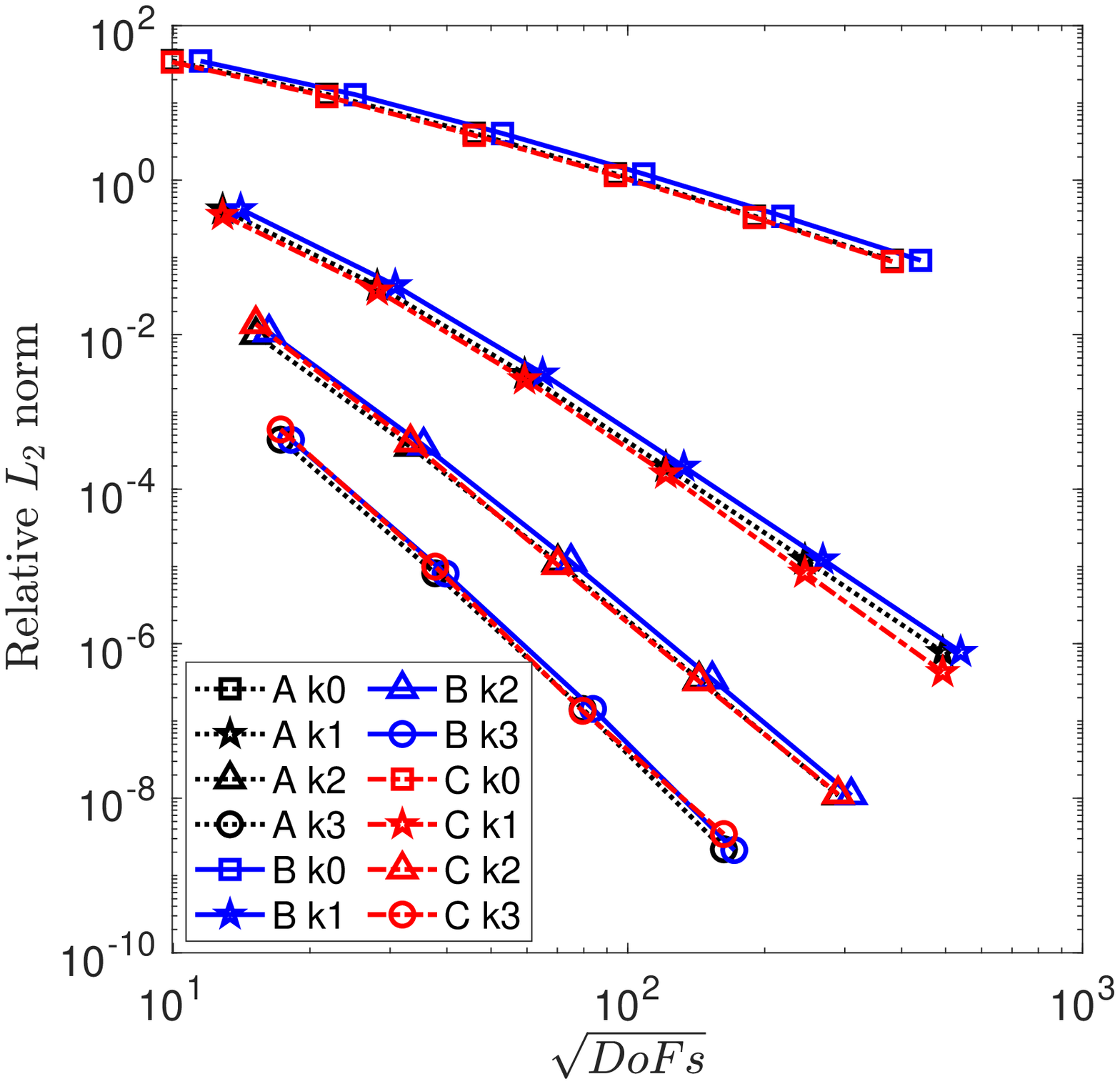}
\caption{
\label{ex1:Comparison error}
Convergence of HHO(A,B,C) methods in $H^2$- and $L^2$-(semi)norms on polygonal meshes. }
\end{figure}

\begin{figure}[!htb]
\centering
\includegraphics[scale=0.35]{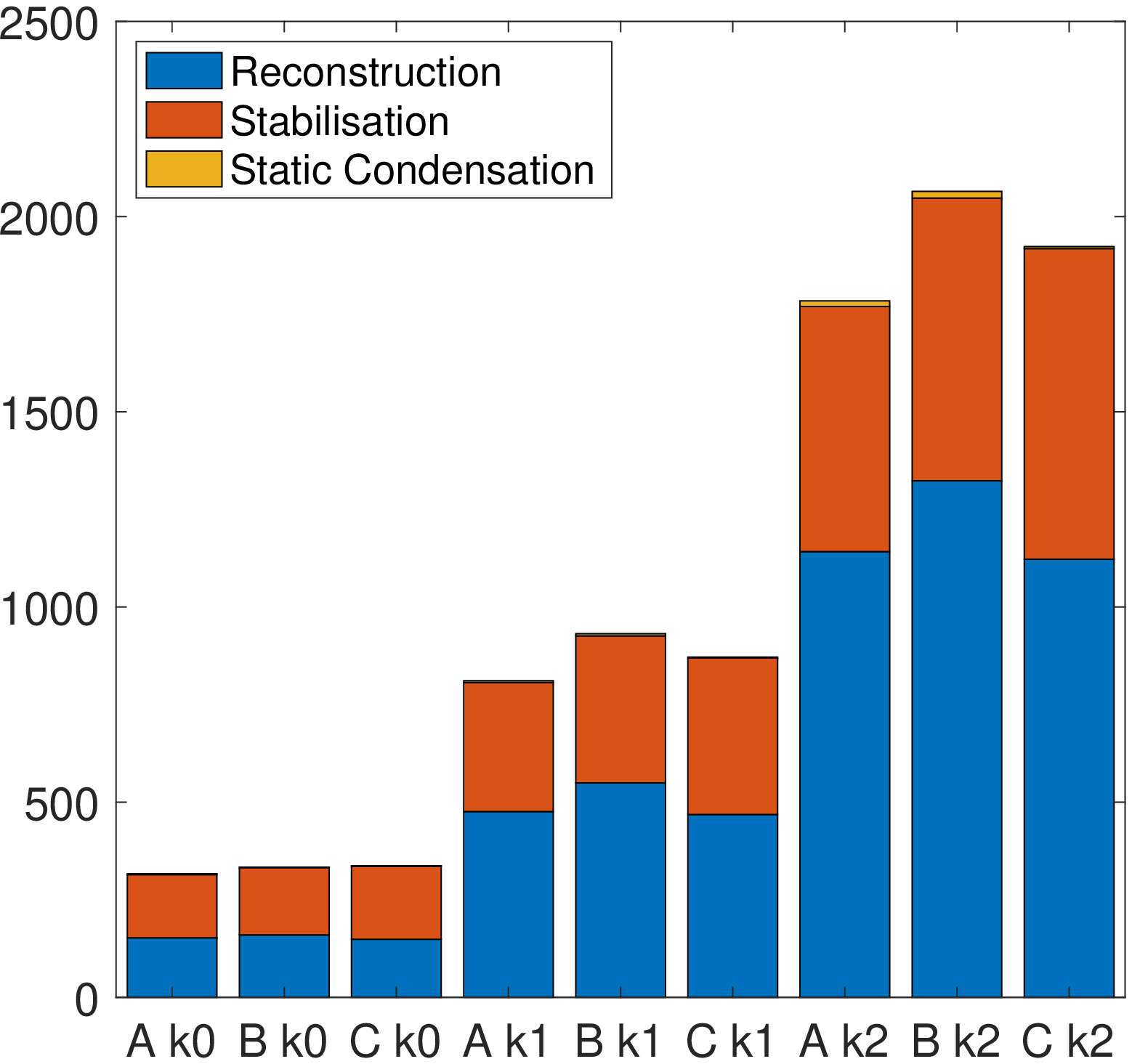}
\hspace{1cm}
\includegraphics[scale=0.35]{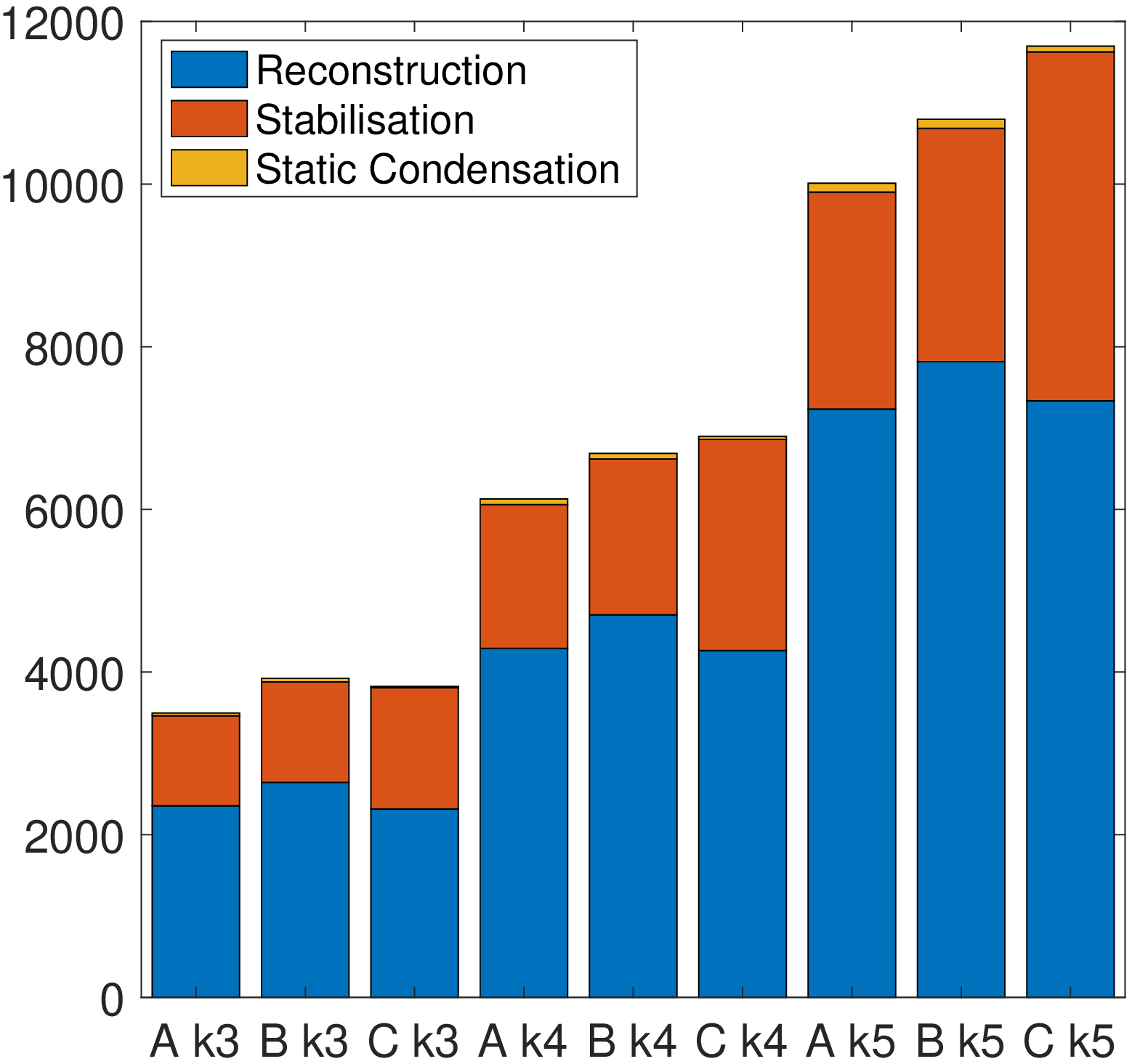}
\caption{
\label{ex1:Comparison time}
Comparison of computational times (in seconds) spent in reconstruction, stabilization, and static condensation for the three HHO(A,B,C) methods on a polygonal mesh with $16{,}384$ cells and polynomial degrees $k\in\{0,\ldots,5\}$.}
\end{figure}

Let us now compare the three HHO(A,B,C) methods. The same relative errors as in
Figure~\ref{ex1:h-refine} are reported in Figure \ref{ex1:Comparison error}.
The results show that the three HHO methods converge with the same rates,
and that the accuracy reached on a given mesh with a given polynomial degree is quite
close for the three methods. We mention that the three methods
are sensitive to conditioning issues
that arise for high polynomial degree when the error is already quite
low (typically below $10^{-8}$ in the $H^2$-seminorm),
and the HHO(C) method is somewhat more sensitive.
It is instructive to have a closer look at how the computational
costs related to the assembling of the system matrix
are spent between the tasks of reconstruction, stabilization, and
static condensation. The results are reported in Figure~\ref{ex1:Comparison time}
on a polygonal mesh with $16{,}384$ cells (and $49{,}014$ edges)
and polynomial degrees $k\in\{0,\ldots,5\}$.
Quite importantly, the local reconstruction operator is computed based on equation
\eqref{eq:rec_ipp}. Indeed, using \eqref{reconstruction} instead results in a more
intricate assembling of the right-hand side, increasing by a factor ranging
from 2.5 (for $k=0$) to 3.5 (for $k=5$) the time spent in reconstruction.
Figure \ref{ex1:Comparison time} shows that the time spent on
static condensation is always marginal. Moreover, we can see that the somewhat more
elaborate design of the stabilization in the HHO(C) method is reflected by a somewhat
larger computational cost than for the HHO(A,B) methods. The (perhaps a bit unexpected)
consequence is that the HHO(A,B) methods require altogether less assembly time than the
HHO(C) method although their number of discrete unknowns is larger.
Finally, we notice that the
reconstruction time is always larger than the stabilization time, and this trend gets
more pronounced for larger $k$. To sum up, the most computationally effective
method based on these results is HHO(A). In what follows, we only consider this method
and simply call it ``HHO'' method.

\subsection{Comparison with DG, $C^0$-IPDG, and FEM}

In this section, we compare the computational performance of HHO with
the fully nonconforming dG method on polygonal and simpler meshes,
and with the $C^0$-IPDG, Morley, and HCT methods on triangular meshes.

\begin{table}[!htb]
\begin{center}
\begin{tabular}{|c|c|c|c||c|c|c|c|}
\hline
\multicolumn{4}{|c||}{HHO, triangular mesh}
&
\multicolumn{4}{|c|}{dG, triangular mesh}  \\
\hline
\hline
order  & $\#$  DoFs  & assembling &  solving & order & $\#$  DoFs  & assembling  &  solving   \\
\hline
$k=0$ & $146{,}688$  & $275.5$ & $7.0$  &  $\ell=2$ & $196{,}608$ & $472.0$ & $16.0$  \\
\hline
$k=1$ & $244{,}480$  & $882.0 $ & $18.3$ & $\ell=3$& $327{,}680$ & $1300.9$ & $41.1$  \\
\hline
$k=2$ &  $342{,}272$ & $2076.3$ & $33.2$  & $\ell=4$ & $491{,}520$  & $2965.4$ &$96.0$ \\
\hline
$k=3$ & $440{,}064$ & $4062.0$ & $53.0$ & $\ell=5$ & $  688{,}128$ & $5940.6$ & $195.1$ \\
\hline
\end{tabular}
\\[.2cm]
\begin{tabular}{|c|c|c|c||c|c|c|c|}
\hline
\multicolumn{4}{|c||}{HHO, polygonal  mesh}
&
\multicolumn{4}{|c|}{dG,  polygonal  mesh}  \\
\hline
\hline
order  & $\#$  DoFs  & assembling &  solving & order & $\#$  DoFs  & assembling  &  solving   \\
\hline
$k=0$ & $145{,}554$   & $251.1$ & $17.3$ &  $\ell=2$ & $98{,}304$   &$420.5$ &$12.7$ \\
\hline
$k=1$ & $242{,}590$ & $770.2$ &$44.7$ & $\ell=3$& $163{,}840$ & $1160.9$ &  $33.1$ \\
\hline
$k=2$ &  $339{,}626$  & $1784.3$ & $86.9$  & $\ell=4$ & $245{,}760$   & $2647.3$ & $78.4$ \\
\hline
$k=3$ & $436{,}662$ & $3496.7$ & $149.9$ & $\ell=5$ & $ 344{,}064$ & $5304.7$ & $155.8$ \\
\hline
\end{tabular}
\end{center}
\caption{Comparison of total DoFs, assembling time, and solving time for the HHO and dG methods. The polynomial degree is chosen so that both methods deliver the same decay rates on the $H^2$-error. Upper table: triangular mesh composed of $32{,}768$ cells; lower table:
polygonal mesh composed of $16{,}384$ cells.}
\label{ex1:table Comparison for Time DG VS HHO}
\end{table}

Let us consider first the dG method. To put HHO and dG on a fair comparison
basis, we compare the HHO method with face polynomial degree $k\ge0$ to the dG method
with cell polynomial degree $\ell=k+2$, so that both methods deliver the same decay rates
on the $H^2$-error. A comparison of total DoFs, assembling time (including
static condensation if applicable), and solving time
for both methods is provided in Table~\ref{ex1:table Comparison for Time DG VS HHO}.
We consider a triangular mesh and a polygonal mesh (with $32{,}768$ and
$16{,}384$ cells, respectively). The first observation is that HHO always
leads to less DoFs, and to smaller times spent on assembling. The main reason
is that the HHO DoFs are attached to the mesh faces rather than the mesh cells.
Although there are more faces than cells in a given mesh (the more so as the cells
are polygons with many faces), the polynomial spaces in cells are richer than those
on faces. Moreover, the degree of the cell polynomials in the dG method is larger than
the degree of the face polynomials in the HHO method ($(k+2)$ vs.~$\{k,k+1\}$).
Another reason for the lower assembling times with HHO is that the evaluation of
numerical fluxes in dG methods actually leads to a more expensive evaluation of
face-related quantities.
The conclusions are, however, slightly different if one considers the solving time
(since the assembling stage can be fully parallelized, the solving time
becomes dominant in highly parallel architectures). The results
in Table~\ref{ex1:table Comparison for Time DG VS HHO} show that
on triangular meshes (where cells have a moderate number of faces), the
solving time for HHO is always smaller than that for dG. Instead, on
polygonal (Voronoi-like) meshes, the solving time for dG is smaller for
low polynomial degrees (up to $2$), whereas the solving time for HHO becomes again
smaller for higher polynomial degrees. The observation on polygonal meshes and low
polynomial degrees indicates that although the stencil of HHO methods is quite compact, it
is still less compact than that of dG methods. In particular, all the discrete
unknowns attached to the faces sharing a given mesh cell are coupled.
Figure~\ref{ex1:Comparison HHO vs DG in time} provides a more thorough
viewpoint on the above results by highlighting the relative efficiency of both
methods measured as the time needed to reach a certain error threshold in
the $H^2$-seminorm. The time is either the assembling time (which is more representative
of a serial implementation) or the solving time (which is more representative
of a parallel implementation). We can see that on triangular and rectangular meshes,
for all polynomial orders, the HHO method reaches an error threshold with less
assembling or solving time than the dG method. The same conclusion is reached
on polygonal meshes for the polynomial degree $k=3$ and both times as well as for
$k\in\{0,1,2\}$ and assembling time, whereas for $k\in\{0,1,2\}$ and solving time,
the efficiency of both methods is comparable.

\begin{figure}[htb]
\centering
\includegraphics[scale=0.3]{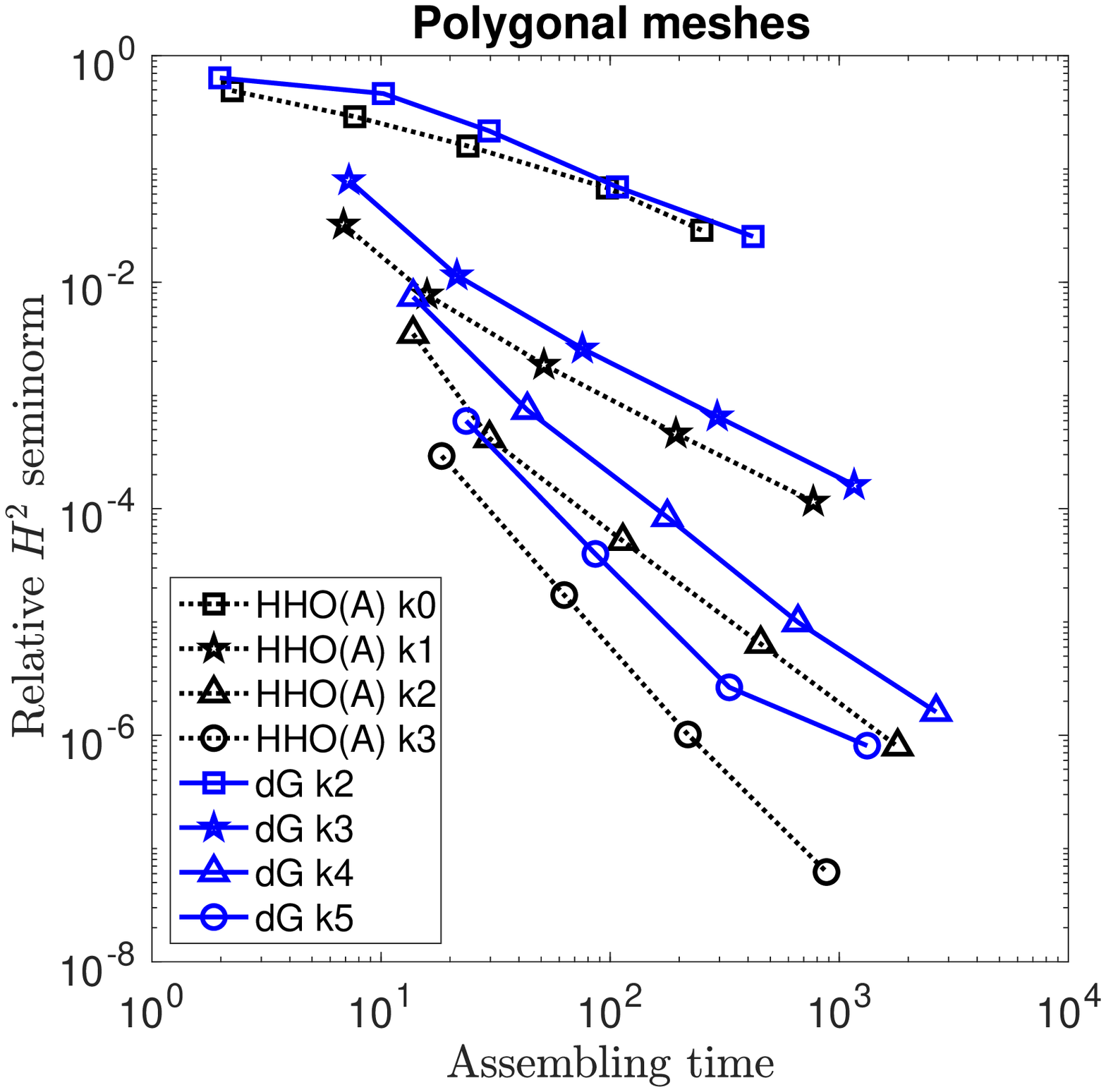}
\includegraphics[scale=0.3]{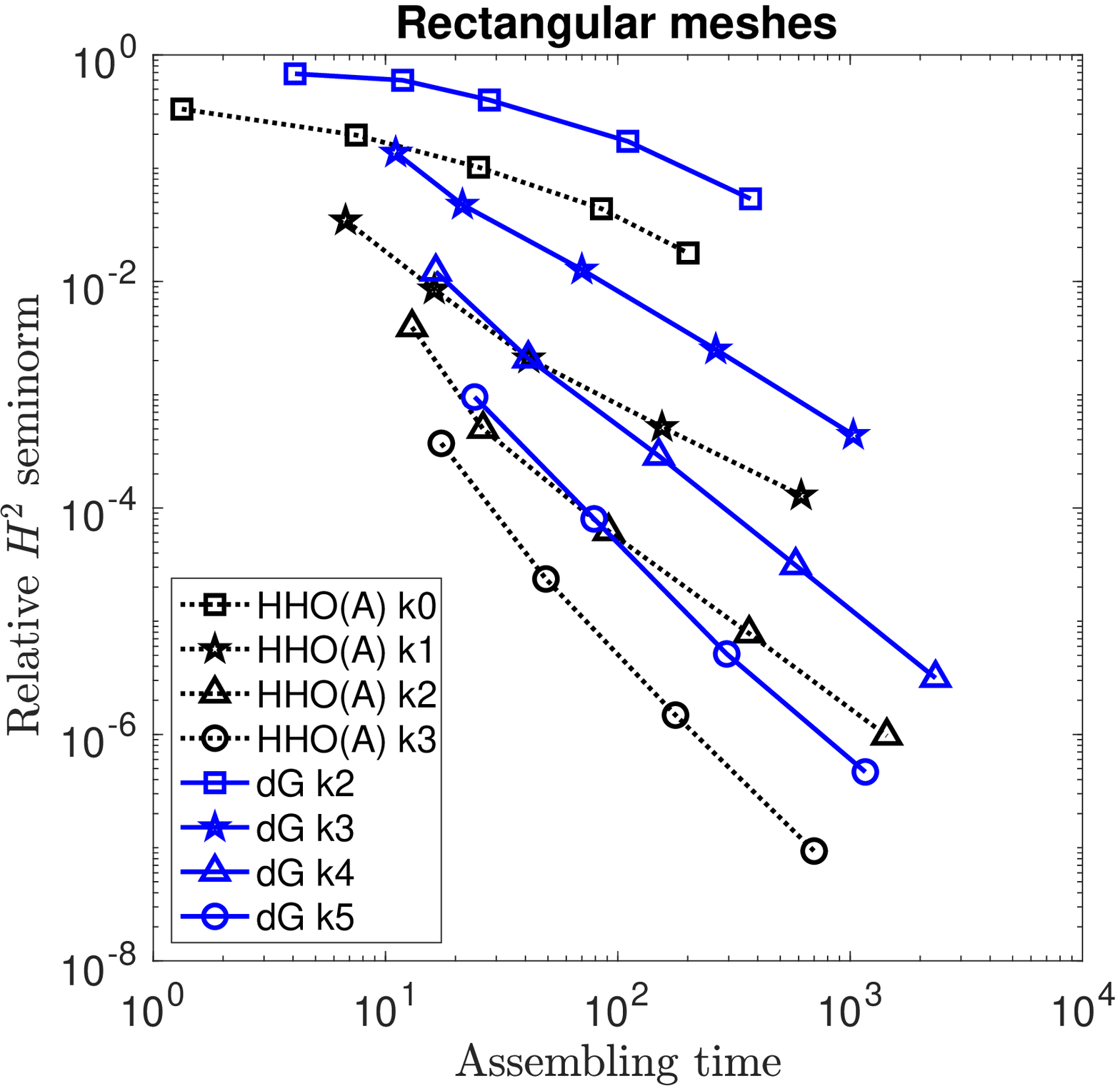}
\includegraphics[scale=0.3]{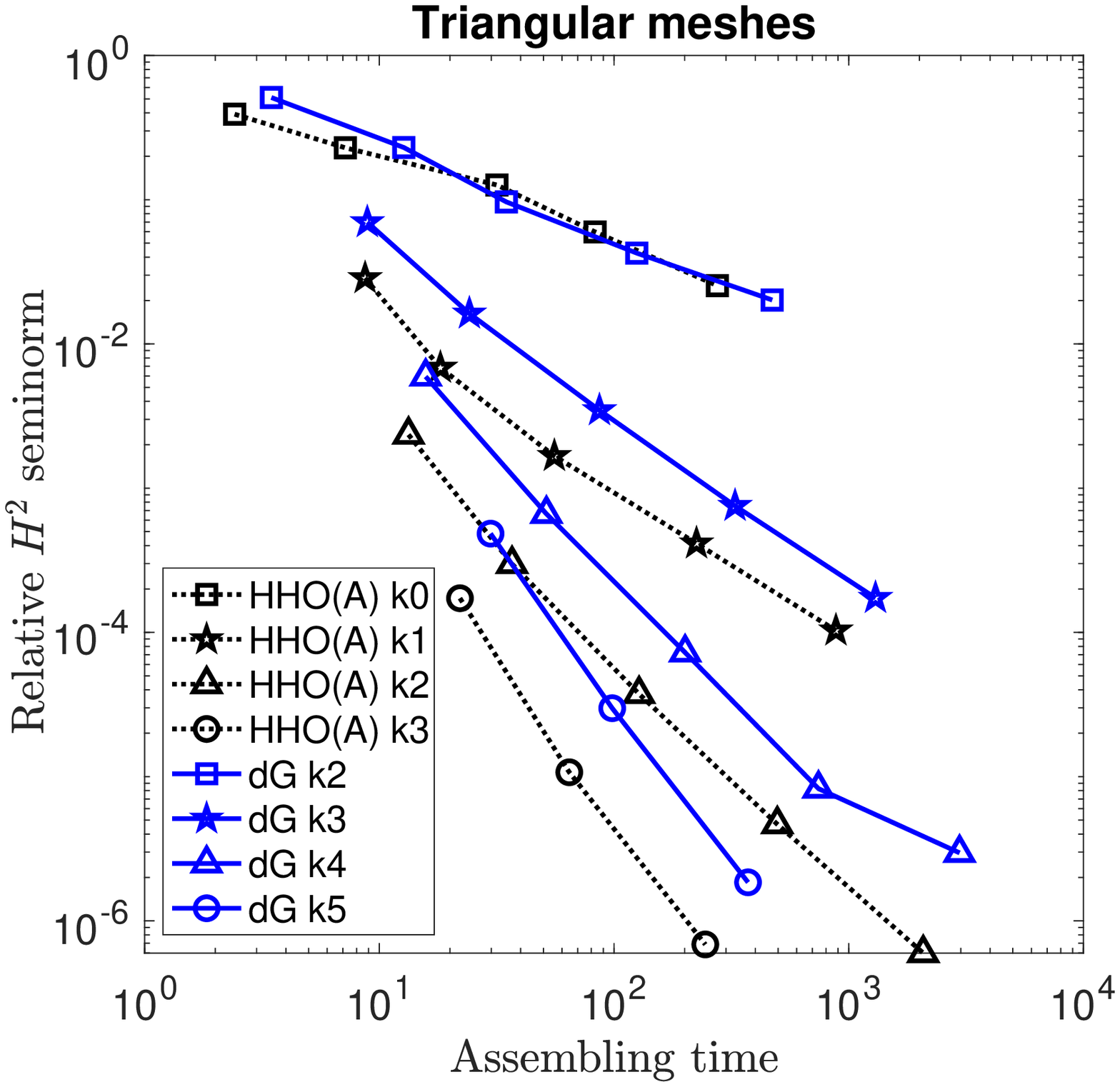} \\
\includegraphics[scale=0.3]{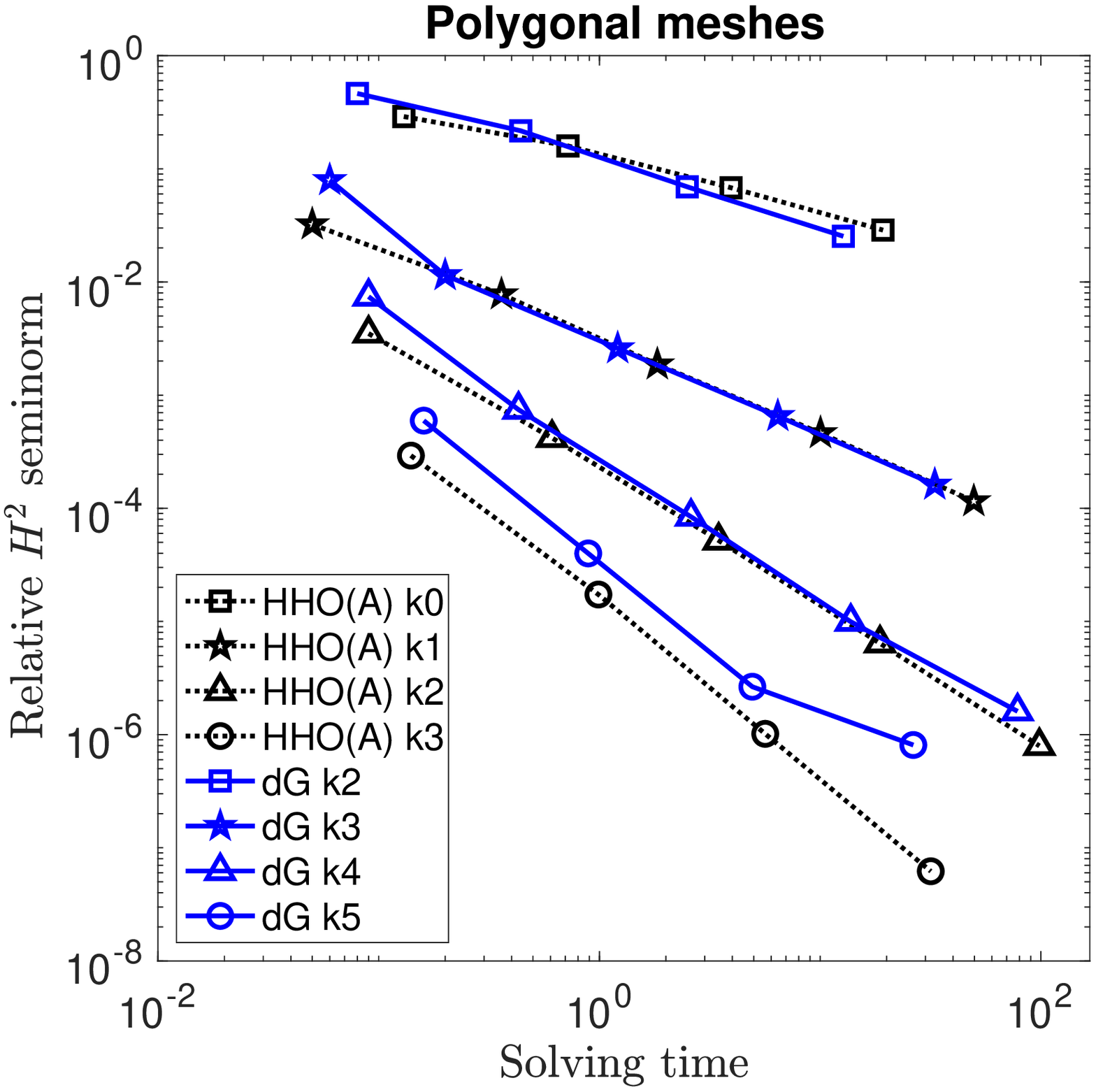}
\includegraphics[scale=0.3]{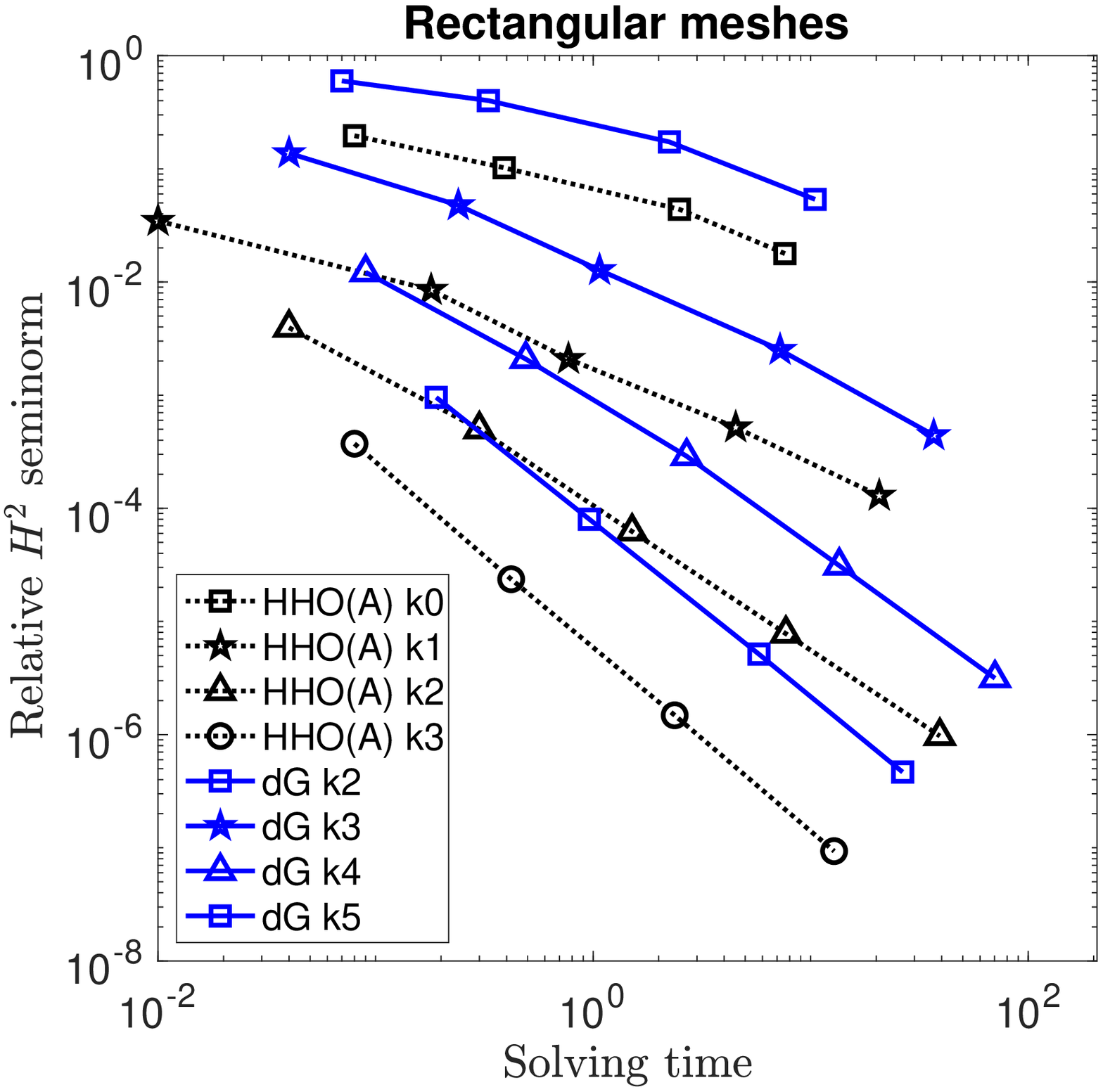}
\includegraphics[scale=0.3]{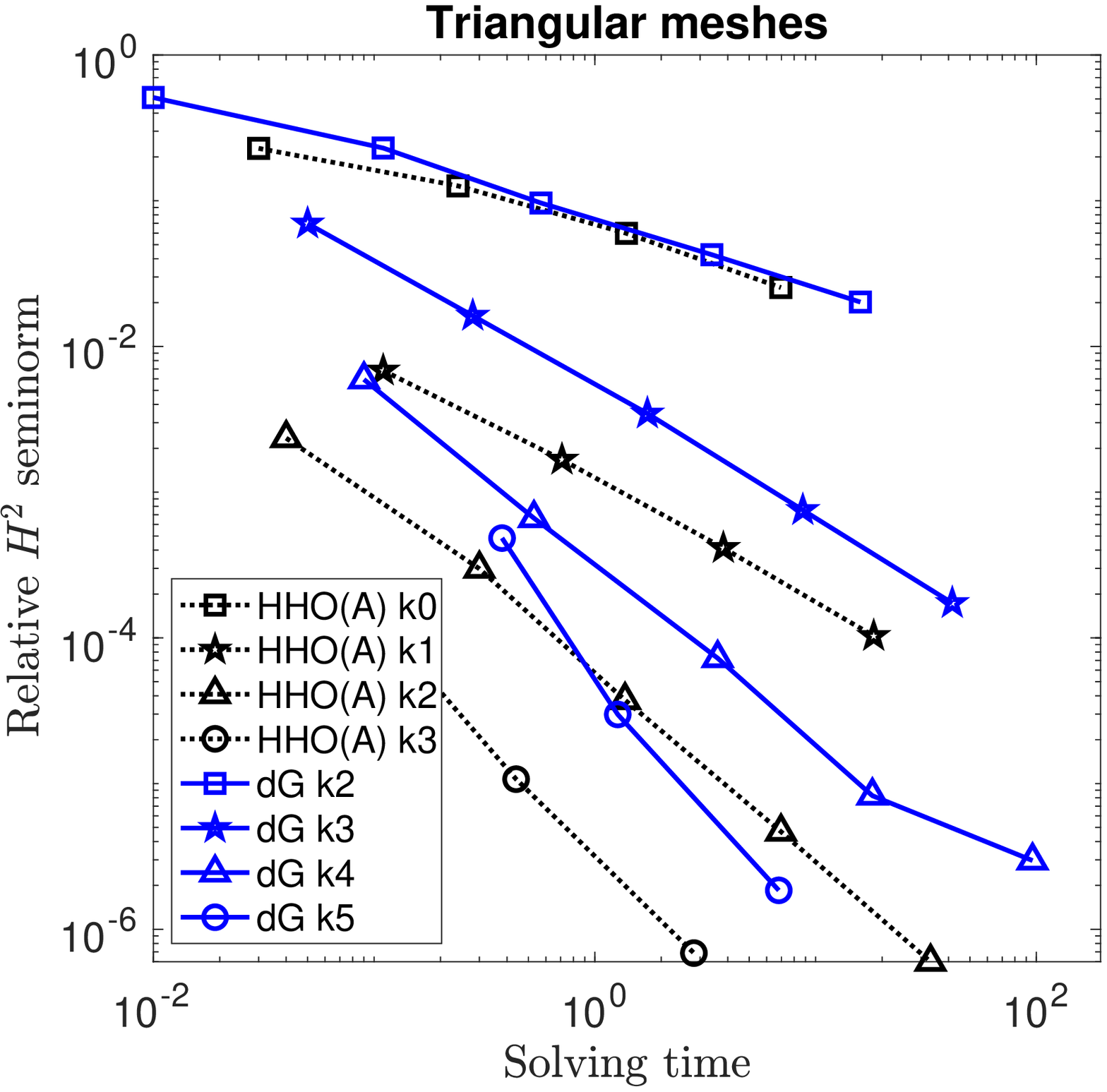}
\caption{
\label{ex1:Comparison HHO vs DG in time}
Comparison of HHO and dG methods: relative $H^2$-seminorm error
as a function of assembling time (upper row) and solving time (lower row)
on a sequence of polygonal (left), rectangular (center), and triangular (right) meshes
and polynomial order $k\in\{0,1,2,3\}$ for HHO and $\ell=k+2$ for dG.}
\end{figure}

\begin{table}[htb]
\begin{center}
\begin{tabular}{||c|c|c|c||c|c|c|c||}
\hline
$k=0$& $\#$  DoFs & assembling &  solving
& $k=1$ & $\#$  DoFs & assembling &  solving \\
\hline
Morley & $65{,}025$   & 22.9 &4.3& HCT & 97{,}283 &169.3&19.8 \\
\hline
HHO & $146{,}688$   &275.5&7.0&  HHO & 244{,}480 &882.0 &18.3\\
\hline
$C^0$-IPDG& $65{,}025$   &369.5&9.3&  $C^0$-IPDG& 130{,}560 &1318.8& 27.0	\\
\hline
\end{tabular}
\end{center}
\caption{Comparison of total DoFs, assembling time, and solving time for the HHO, $C^0$-IPDG, Morley, and HCT methods. The polynomial degree is chosen so that all the methods in the same column deliver the same decay rates on the $H^2$-error.
Triangular mesh composed of $32{,}768$ cells, $49{,}408$ edges, and $16{,}641$ vertices.}
\label{ex2:Comparison for DoFs}
\end{table}

Let us now compare the efficiency of the HHO method to the $C^0$-IPDG, Morley, and HCT
methods on a sequence of successively refined triangulations with
$32$, $128$, $512$, $2{,}048$, $8{,}192$, and $32{,}768$ cells.
As above, the comparison is made between methods delivering the same decay rates
on the $H^2$-error. This means that the HHO method with polynomial degree $k\ge0$
is compared with the $C^0$-IPDG with degree $\ell=k+2$. Moreover, the HHO($k=0$) and
the $C^0$-IPDG($\ell=2$) methods are compared with the Morley element, and
the HHO($k=1$) and the $C^0$-IPDG($\ell=3$) methods are compared with the
HCT element. Table \ref{ex2:Comparison for DoFs} reports the total number of DoFs,
the assembling time, and the solving time for all the methods on the finest triangular
mesh. We can see that in the lowest-order case, both the assembling and solving times
for the Morley element are (much) smaller than those for the HHO($k=0$)
method, which are, in turn, smaller than those for the $C^0$-IPDG($\ell=2$) method.
The conclusion for the higher-order case is the same concerning the lower times
for HHO($k=1$) with respect to $C^0$-IPDG($\ell=3$), whereas only the assembling time
for HCT is (much) smaller than that for HHO($k=1$), the solving time being
instead comparable.
One reason for this good performance of HHO compared with HCT can be that the
stencil of HCT leads to a more dense system matrix, as a result of the method
attaching DoFs to the mesh vertices.
Figure \ref{ex2:Comparison time triangular} reports the error measured in the
$H^2$-seminorm as a function of assembling and solving time, thereby providing a
comparison of the efficiency of the various methods on all the considered triangulations.
We notice that the Morley element is the most efficient among the lowest-order methods,
whereas the efficiency of the HHO method is better than that of $C^0$-IPDG,
and it is better than that of the HCT element if the solving time is considered,
whereas the conclusion is reverted if the assembling time is considered.

\begin{figure}[htb]
\centering
\includegraphics[scale=0.3]{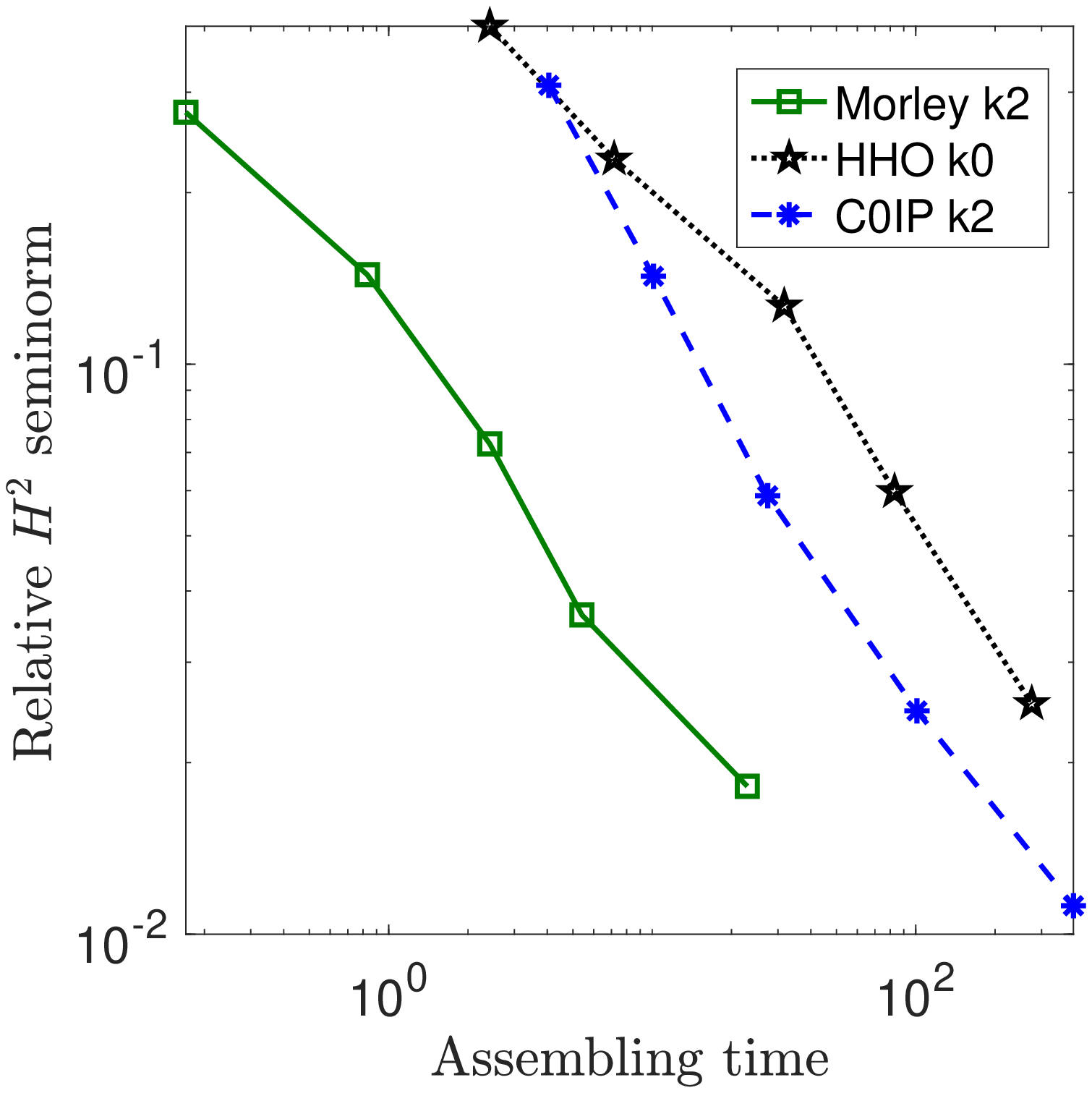}
\includegraphics[scale=0.3]{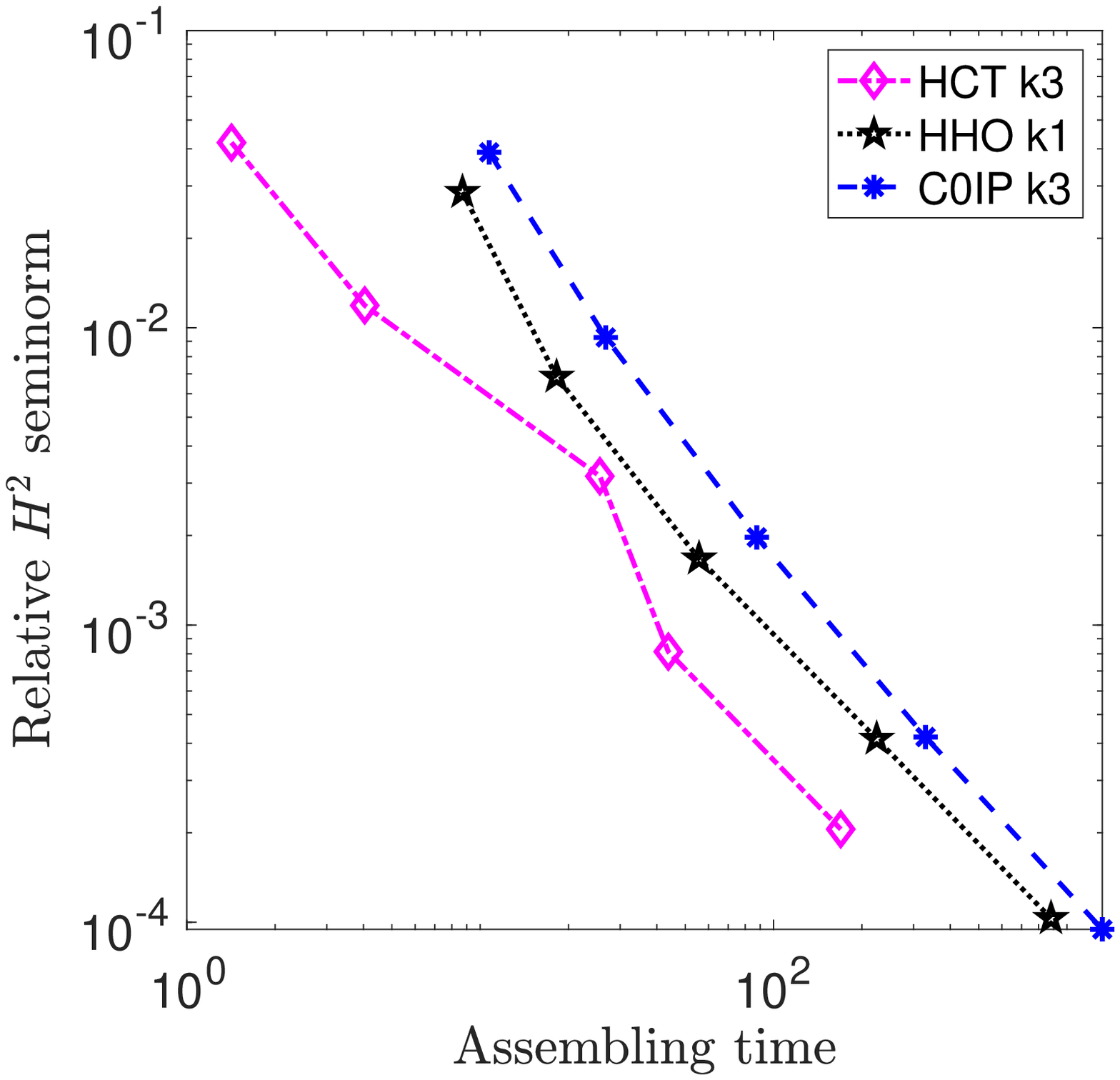}
\includegraphics[scale=0.3]{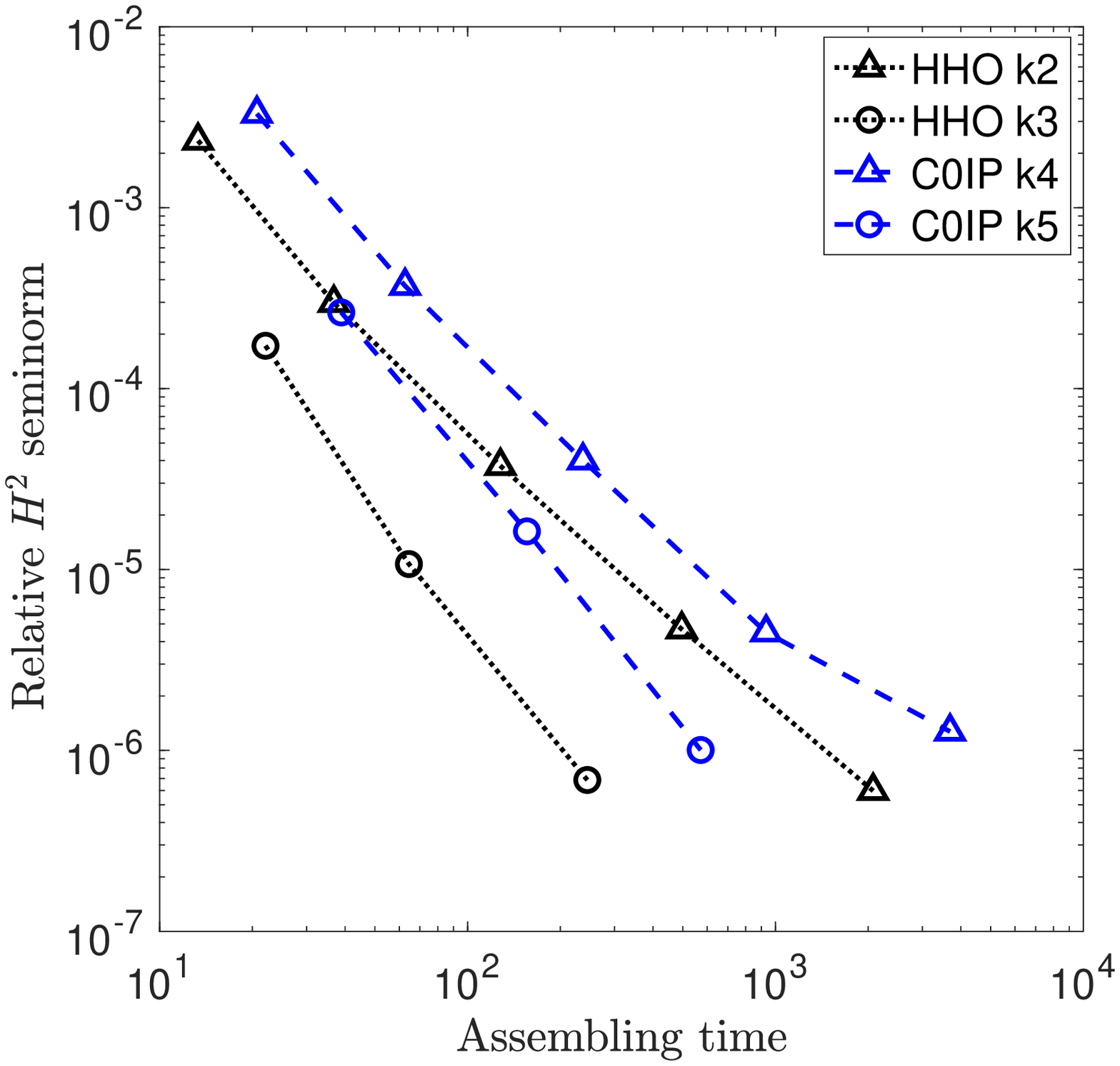} \\
\includegraphics[scale=0.3]{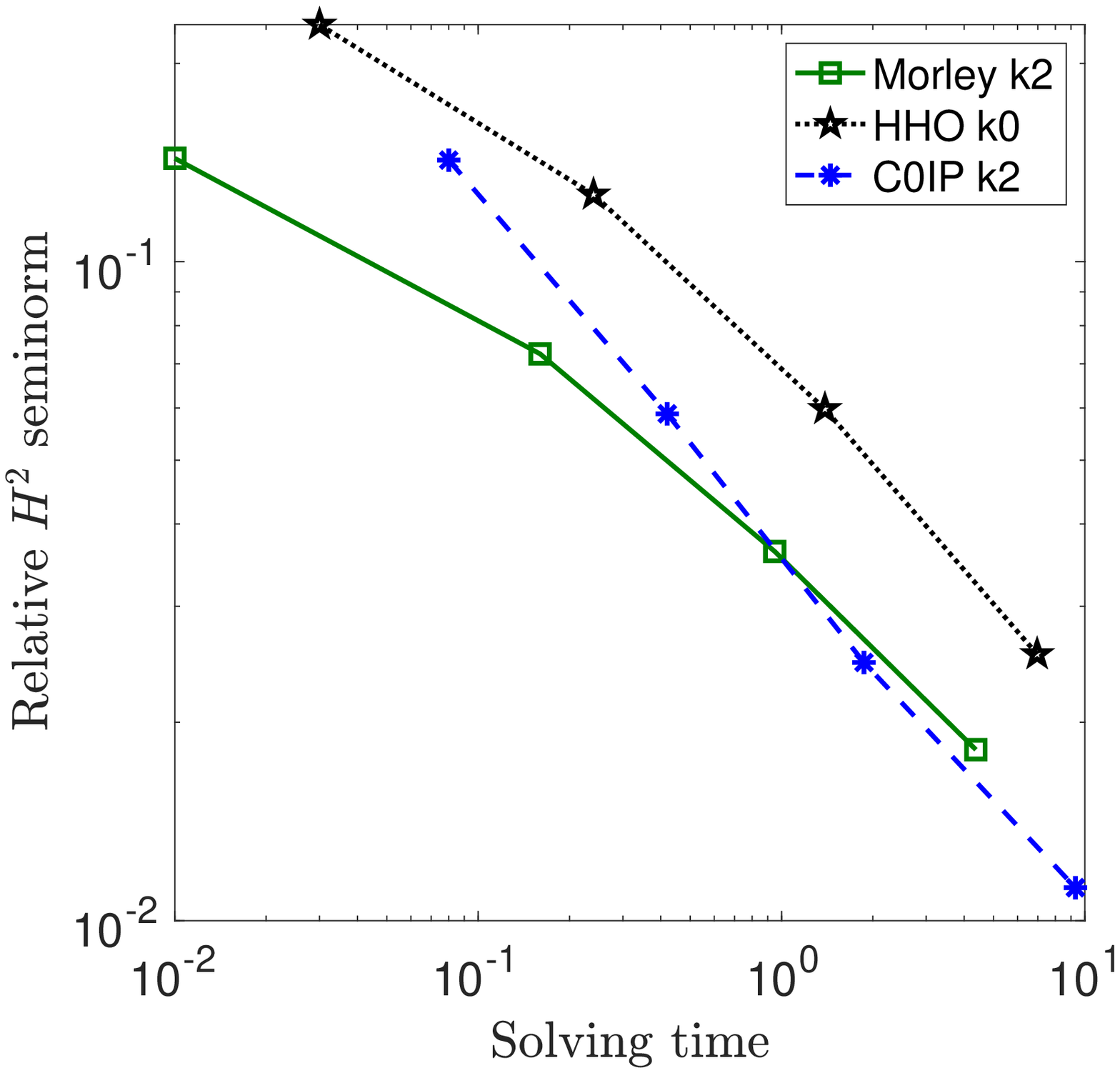}
\includegraphics[scale=0.3]{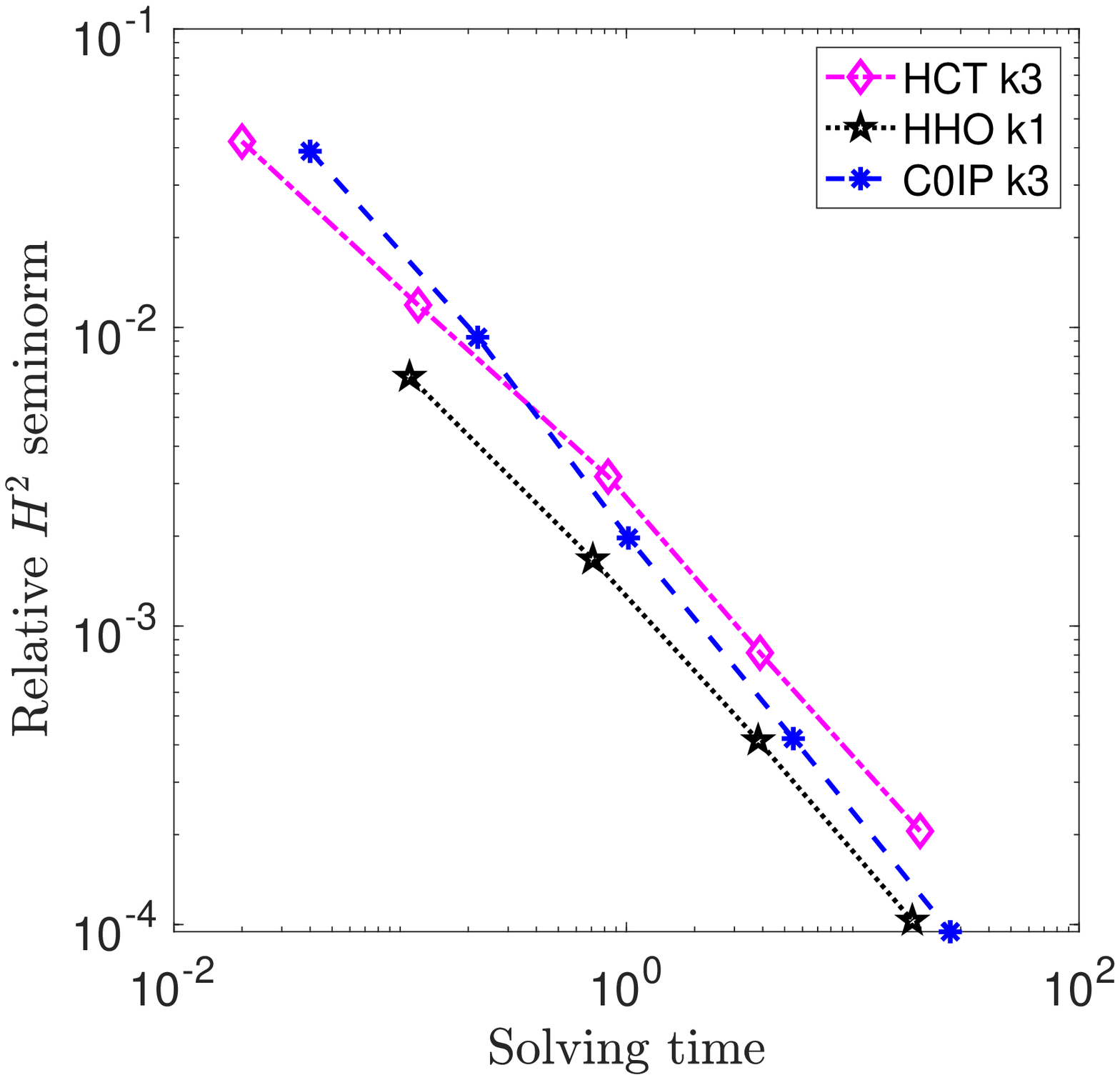}
\includegraphics[scale=0.3]{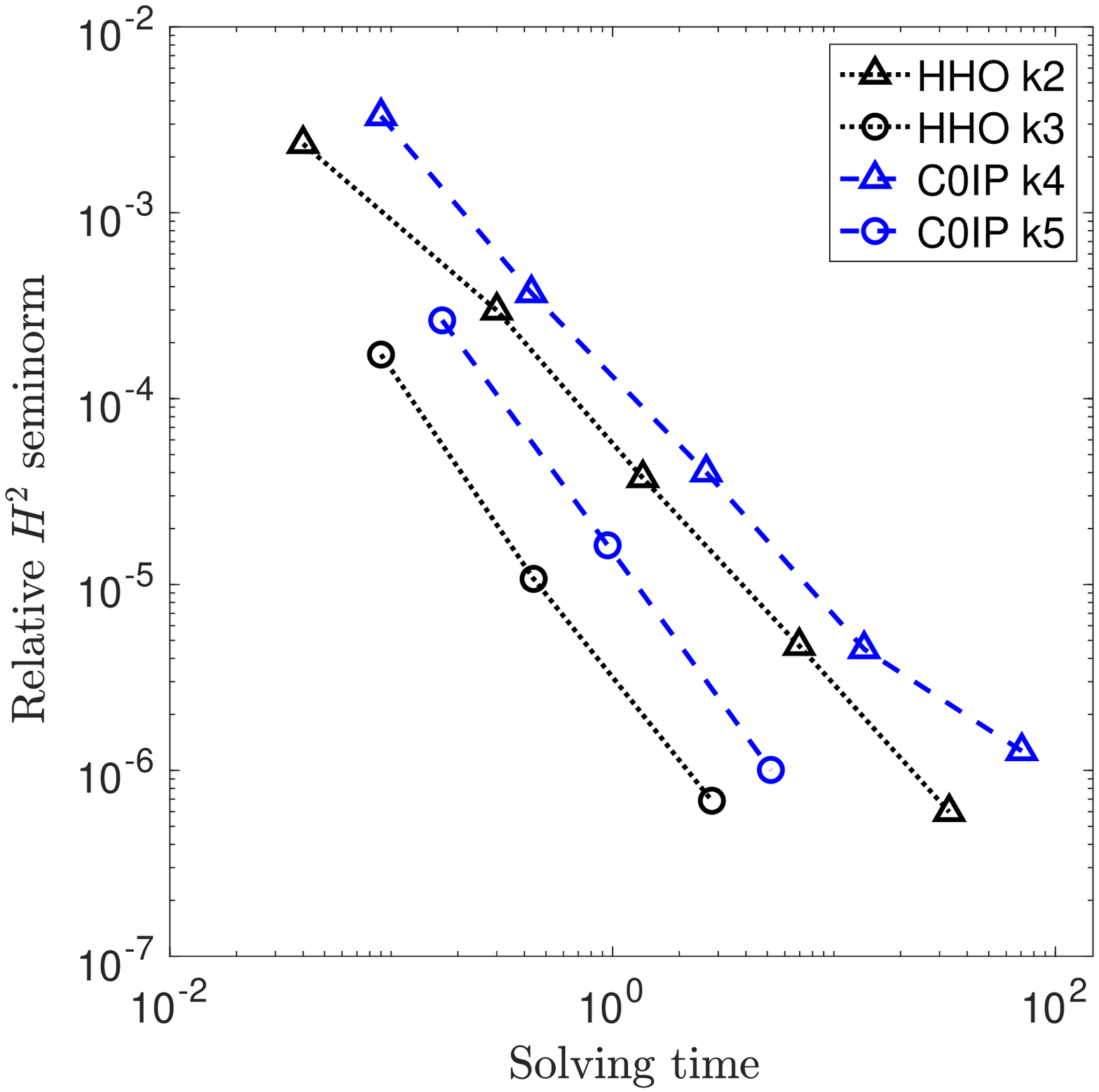}
\caption{
\label{ex2:Comparison time triangular}
Comparison of HHO, $C^0$-IPDG, Morley, and HCT methods: relative $H^2$-seminorm error
as a function of assembling time (upper row) and solving time (lower row)
on a sequence of triangular meshes.}
\end{figure}

\subsection{Tests on the HHO-Nitsche method}

To conclude, let us briefly illustrate that the proposed HHO-Nitsche (HHO-N) method with a
weak enforcement of the boundary conditions performs as
well as the HHO method with a strong enforcement of the boundary conditions.
We select $f$ and the non-homogeneous boundary data $g_{D}$ and $g_{N}$ such that
on $\Omega:=(0,1)^2$, the exact solution is $u(x,y) = \sin(\pi x)^2  \sin(\pi y)^2  + \exp{(-(x-0.5)^2-(y-0.5)^2)}$.
We consider the same sequence of polygonal meshes and the same polynomial degrees as in
Section~\ref{sec:conv_rates}. Figure \ref{ex3:h-refine} presents the relative errors
measured in the $H^2$-seminorm and the $L^2$-norm using cellwise the reconstruction operator
 for their evaluation. We compare the HHO and HHO-N methods. \Rev{Both} methods
employ the same number of globally coupled DoFs. We can see from Figure \ref{ex3:h-refine}
that the errors produced by both methods are quite close in all cases.

\begin{figure}[!hbt]
\centering
\includegraphics[scale=0.3]{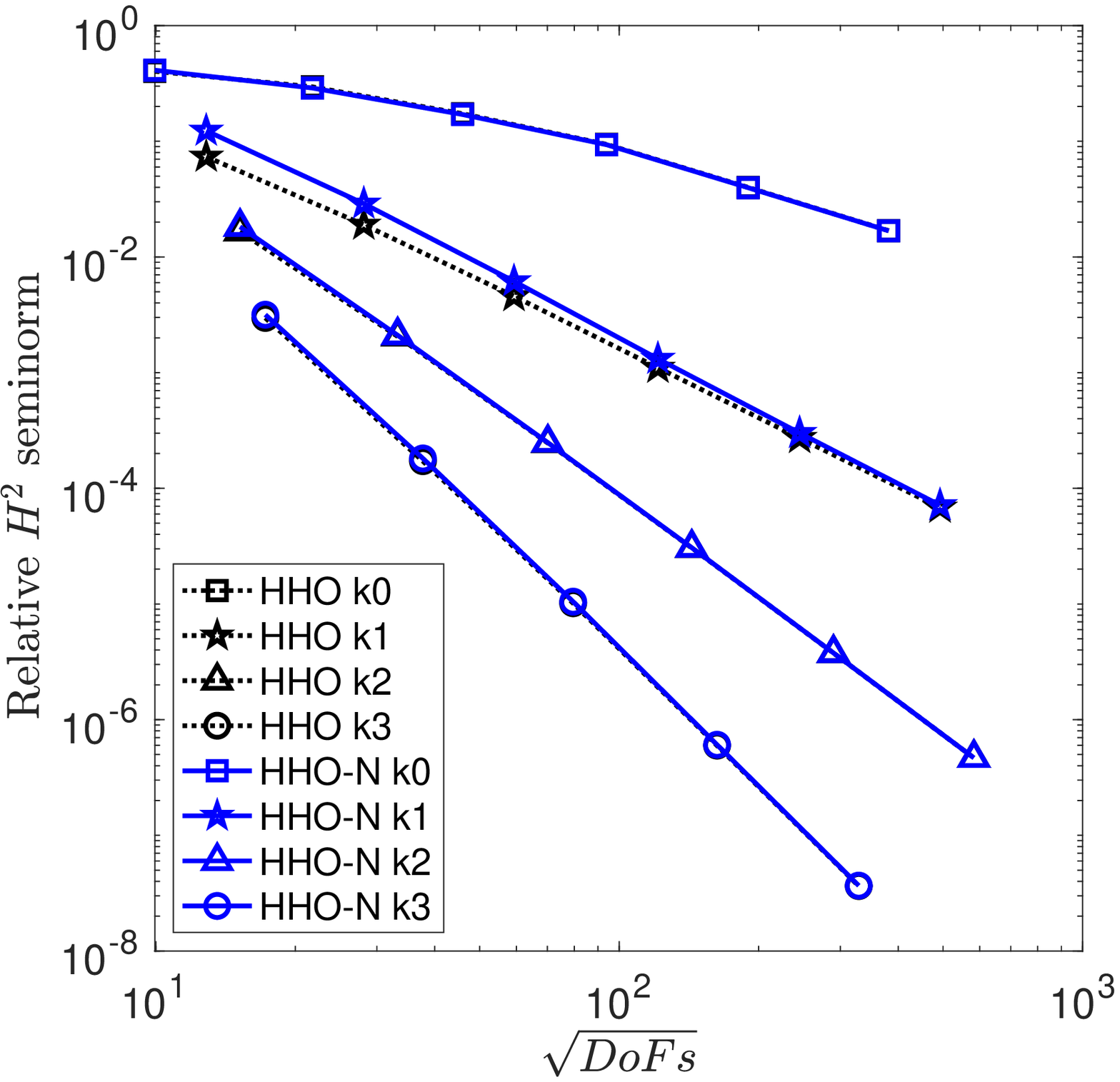}
\hspace{2cm}
\includegraphics[scale=0.3]{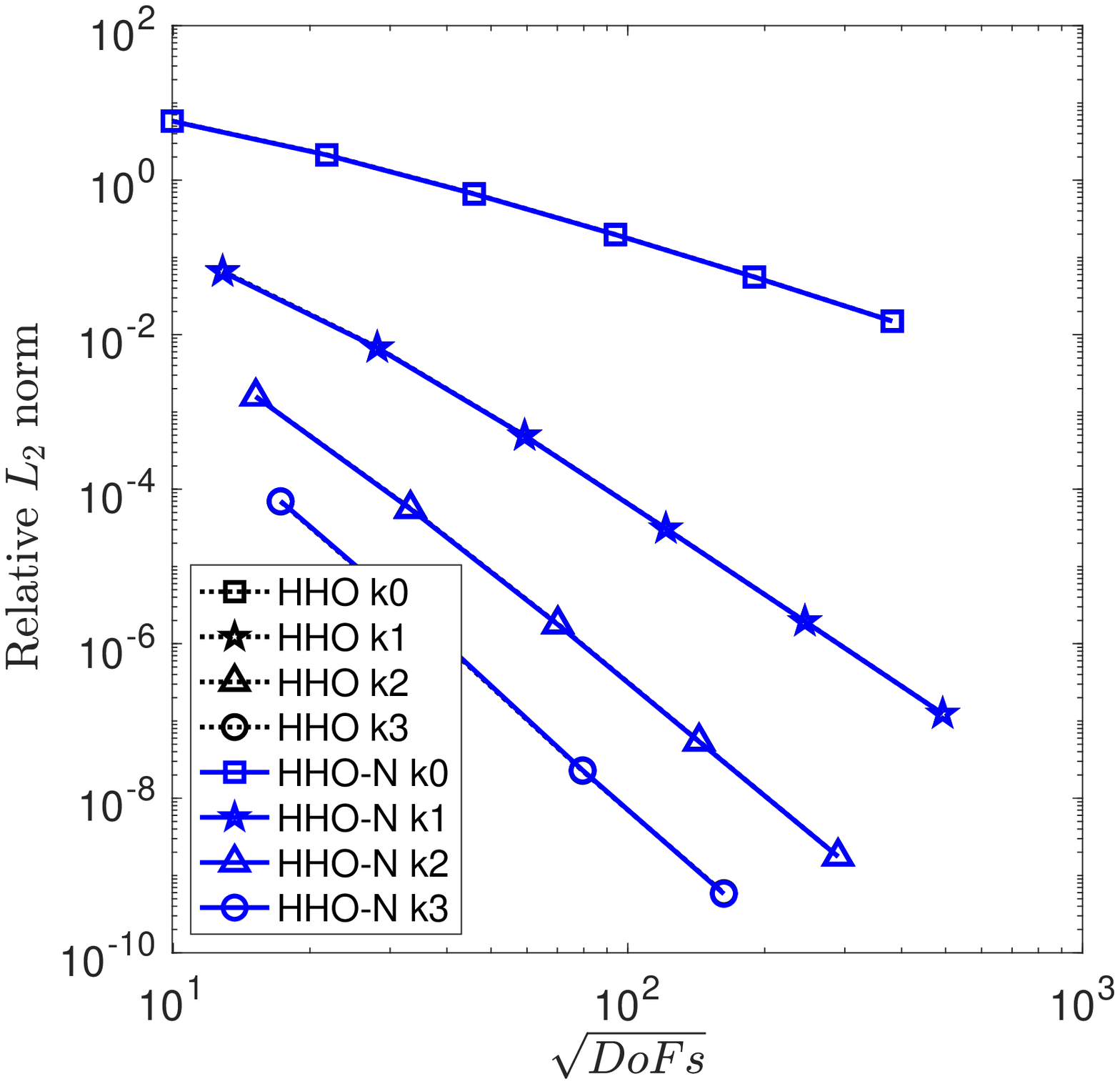}
\caption{
\label{ex3:h-refine}
Convergence of HHO and HHO-N methods in $H^2$- and $L^2$-(semi)norms on polygonal meshes.}
\end{figure}

\subsection*{Acknowledgment}
The use of the NEF computing platform at Inria Sophia Antipolis M\'editerran\'ee is gratefully acknowledged.

\bibliographystyle{siam}
\bibliography{HHO_N}

\end{document}